\theoremstyle{plain}
\newtheorem{theorem}{Theorem}[section]
\newtheorem{lemma}[theorem]{Lemma}
\newtheorem{corollary}[theorem]{Corollary}
\newtheorem{proposition}[theorem]{Proposition}
\theoremstyle{definition}
\newtheorem{definition}[theorem]{Definition}
\newcommand{\vcut}[1]{}
\newcommand{\anote}[1]{{\ \color{blue}\bf{}{AK: #1}}}
\newcommand{\vnote}[1]{{\ \color{red}\bf{}{VG: #1}}}
\newcommand{\fo}{\varphi}
\newcommand{\fob}{\psi}
\newcommand{\foc}{\chi}
\newcommand{\fod}{\theta}
\newcommand{\ga}{\Gamma}
\newcommand{\de}{\Delta}
\newcommand{\extn}[1]{\llbracket #1 \rrbracket}
\newcommand{\prop}{\textit{PROP}\xspace}
\newcommand{\dep}{\ensuremath{\mathsf{D}}\xspace}
\newcommand{\ind}{\ensuremath{\,\mathsf{I}\,}\xspace}
\newcommand{\cdep}{\ensuremath{\mathsf{C}}\xspace}
\newcommand{\cons}{\ensuremath{\mathcal{C}}\xspace}
\newcommand{\Dep}{\ensuremath{\mathsf{Det}}\xspace}
\newcommand{\D}{\ensuremath{\mathcal{D}}\xspace}
\newcommand{\I}{\ensuremath{\mathcal{I}}\xspace}
\newcommand{\tr}{\mathsf{t}^{W}}
\newcommand{\dlang}{\ensuremath{\mathcal{L}_{\dep}}\xspace}
\newcommand{\dfor}{\ensuremath{\textit{FOR}(\dlang)}\xspace}
\newcommand{\clang}{\ensuremath{\mathcal{L}_{\cdep}}\xspace}
\newcommand{\ubox}{\ensuremath{[\mathsf{u}]}\xspace}
\newcommand{\udiam}{\ensuremath{\langle\mathsf{u}\rangle}\xspace}
\newcommand{\Ubox}{\ensuremath{[\mathsf{U}]}\xspace}
\newcommand{\Udiam}{\ensuremath{\langle\mathsf{U}\rangle}\xspace}
\newcommand{\Uman}{\ensuremath{\mathsf{U}}\xspace}
\newcommand{\udiamp}{\ensuremath{\langle\mathsf{u}'\rangle}\xspace}
\newcommand{\uboxp}{\ensuremath{[\mathsf{u}']}\xspace}
\newcommand{\ulang}{\ensuremath{\mathcal{L}_{\mathcal{U}}}\xspace}
\newcommand{\indlang}{\ensuremath{\mathcal{L}_{\ind}}\xspace}
\newcommand{\axdep}{\ensuremath{\textit{AX}(\dlang)}\xspace}
\newcommand{\axcdep}{\ensuremath{\textit{AX}(\clang)}\xspace}
\newcommand{\axindep}{\ensuremath{\textit{AX}(\indlang)}\xspace}
\newcommand{\vmnote}[1]{\todo[color=green!45!white]{\ \\\color{black}\bf{}{VG: #1}}}
\newcommand{\conj}{\mathit{Conj}}
\newcommand{\nff}{\mathit{DNF}}
\newcommand{\Iff}{\text{ iff }}
\title{\textbf{Logics for Propositional
Determinacy and Independence}}
\author{Valentin Goranko\\
\text{Department of Philosophy}\\
\text{Stockholm University}\\
\text{Sweden}\\
\text{Department of Mathematics} \\ 
\text{University of Johannesburg} \\
\text{South Africa\footnote{Visiting professorship}} 
\and Antti Kuusisto\\
\text{FB3: Mathematics/Computer Science}\\
\text{University of Bremen}\\
\text{Germany}}
\date{}
\begin{document}
\maketitle

\begin{abstract}
%
%
%
\noindent
This paper investigates formal logics for reasoning
about determinacy and independence.
Propositional Dependence Logic $\D$ and Propositional
Independence Logic $\I$ are recently developed
logical systems, based on team semantics, that 
provide a framework for such reasoning tasks.
We introduce two new logics $\mathcal{L}_{\dep}$ and  $\mathcal{L}_{\ind}$,
based on Kripke semantics, 
and propose them as alternatives for $\D$ and $\I$, respectively. 
We analyse the relative expressive powers of these four logics
%
%
and discuss the way these systems relate to natural language.
%
%
%
We argue that $\mathcal{L}_{\dep}$ and $\mathcal{L}_{\ind}$ naturally
resolve a range of interpretational
problems that arise in $\D$ and $\I$.
We also obtain sound and complete axiomatizations for $\mathcal{L}_{\dep}$
and $\mathcal{L}_{\ind}$.
\end{abstract}
%

%

\section{Introduction}

%
%

In this paper we investigate the notions of 
\emph{propositional determinacy}
and \emph{propositional independence}.
%
%
We begin with a brief overview of related concepts.
\subsection{Dependence and independence: brief historical notes}
%
%
%
%
%
%
Dependence and independence are abstract notions
that have played an important role in mathematics and the
natural sciences since antiquity.
Today the concepts are
omnipresent in virtually all fields of science.
There exists a wide range of different scientific notions of dependence, e.g.,
statistical correlation,
the causal relationship, 
and functional dependence.
Likewise, the notion of independence has 
different meanings in different contexts, e.g.,
probabilistic and linear independence in mathematics, 
as well as political and behavioural independence in
social vernacular.
An early formal logical analysis of dependence was proposed in \cite{grelling}\footnote{Also available as a reprint in \cite{Smith}.}.  Dependencies in relational databases have been studied since (at least)  the pioneering work of Codd in the early 1970's. Of the many relevant
references we only note here Armstrong's work (\cite{Armstrong74})
which provides a set of axioms for the notion of
functional dependence in databases.
The concept of dependence has also appeared in various philosophical
contexts under various different names.
From the  point of view of the current article,
the notion of \emph{supervenience} is
perhaps the most important related concept.
The paper \cite{SEP-Supervenience} describes supervenience formally as follows:

\medskip

``\textit{A set of properties $A$ supervenes upon another set $B$ just in case no two things can differ with respect to $A$-properties without also differing with respect to their $B$-properties. In slogan form, `there cannot be an $A$-difference without a $B$-difference'}.''

\medskip

On the formal level, perhaps the closest in spirit to the present study 
regarding  dependence is Humberstone's logical formalisation and
study of supervenience as a generalisation of logical consequence in, e.g.,
\cite{Humberstone92,Humberstone93,Humberstone98}.
Another notion related to the current paper is
the notion of \emph{contingency}.
Formal investigations of contingency can be found in, inter alia, \cite{montgomery},
\cite{Humberstone95,Humberstone02}, as well as \cite{Pizzi07,Pizzi13}. 
For a recent study of the contingency operator in various modal logics see \cite{FanWD15}.
We also mention two very recent and closely related papers
written after the appearance of the earlier version \cite{GorankoKuusistoArxiv2016}
of the present paper. The two papers have at least partially 
been written as a response to \cite{GorankoKuusistoArxiv2016}.
The first one of them is
\cite{JieFan2016} which develops a formal modal logic of
supervenience and also addresses some research questions raised in \cite{GorankoKuusistoArxiv2016} and reiterated here in Section \ref{futuredirections}.  
The other one is \cite{Humberstone2017} which explores, inter alia, connections between supervenience and dependence and discusses in detail some aspects of 
\cite{GorankoKuusistoArxiv2016}.
In the context of logical semantics,
the notion of \emph{independence} has 
been investigated perhaps most prominently in 
\emph{Independence Friendly (IF) Logic}
originally defined in \cite{hisa89}; see also \cite{Hintikka96}.
%
%
IF logic was first formulated in terms of
game-theoretic semantics, and no compositional
semantics for that logic was originally available. 
Later on, Hodges developed (\cite{Hodges97a}) a
compositional semantics for the system, 
currently know as \emph{team semantics}.
%
%
The idea of team semantics, in turn, lead to V\"a\"an\"anen's development of
\emph{Dependence Logic} in \cite{va07}.
Dependence logic sparked a renewed interest in
logical formalisation and analysis of dependence and
initiated an active related research programme.
For an overview of the work in that
direction, see \cite{kontisurvey, gallianivaa} and the references therein.
\subsection{Propositional logics of dependence and independence based on team semantics} 

V\"a\"an\"anen's Dependence Logic extends classical first-order logic with \emph{dependence atoms} $$\dep(x_1,...,x_k;y)$$ with the intuitive meaning that the choice of an
interpretation for $y$ is \emph{functionally determined} by the choices of 
interpretations for $x_1,...,x_k$ in evaluation games based on game-theoretic semantics.
Since the introduction of dependence atoms, research on logics based on team semantics has flourished and several kinds of related logical systems have been investigated.
%

%
%

%
A propositional modal variant of Dependence Logic, called \emph{Modal Dependence Logic}, was defined in \cite{vaa07}. That logic extends the syntax of ordinary modal logic with a new operator $\dep$ and formulae $\dep(p_1,\dots,p_k;q)$ with the intuitive interpretation that the truth values of $p_1,\dots,p_k$  \emph{determine} the truth value of $q$.
%
%
The propositional fragment of 
Modal Dependence Logic extended with $\dep$
gives rise to \emph{Propositional Dependence Logic  $\D$}.
In the logic $\D$, sets of \emph{propositional assignments}
are called \emph{teams}; recall that a propositional assignment is
simply a function from a set of atomic proposition
symbols to the Boolean domain $\{0,1\}$.
Intuitively, a team can be regarded as a set of possible worlds.
A formula $\dep(p;q)$ is then 
defined to be \emph{true in a team} $W$ if and only if each pair of 
possible worlds $w,u\in W$ that give the same truth value to $p$,
must also give the same value to $q$.
The more complex atoms $\dep(p_1,\dots,p_k;q)$ are
interpreted in a similar fashion: any pair of worlds in $W$
that agree on the truth values of $p_1,\ldots,p_k$,
must also agree on the value of $q$ (see Section \ref{preliminaries}
for the formal definition.)
The propositions $p_1,\dots,p_k$ are said to
\emph{determine} $q$.
%

%
%

The notion of \emph{independence} which we 
investigate in this paper 
originates from \emph{Independence Logic}
defined in \cite{GV13}.
Analogously to  Dependence Logic, Independence Logic extends first-order logic by
\emph{independence atoms} $$(x_1,\dots,x_i)\, \ind_{(y_1,\dots,y_j)}(z_1,\dots,z_k),$$
with the intuitive meaning that for any fixed set of values for 
$y_1,\dots,y_j$, the possible values for $x_1,\dots,x_i$ are
independent of the possible values for $z_1,\dots,z_k$. 
A propositional 
variant of Independence Logic, called \emph{Propositional Independence Logic} (and denoted
here by $\I$) has been investigated in the literature  
(in, e.g., \cite{Konti14}, \cite{Y14}),
and it relates to Independence Logic the same way Propositional Dependence Logic 
relates to Dependence Logic.

%
%
Dependence Logic and Independence Logic, together with the \emph{Inclusion Logic} of
\cite{galli},  are currently the central logical systems studied in the framework of team
semantics.
%
%
It is also worth noting here 
that Propositional Dependence Logic is 
closely related to \emph{inquisitive logic}
\cite{ciardellimasters, ciardelliroelofsen}.
In particular, the system $\mathrm{InqL}$ of inquisitive logic is a
propositional team-based logic equi-expressive with $\mathcal{D}$.
For investigations on the relations between inquisitive logic and $\mathcal{D}$, see, e.g.,
\cite{Y14, vaanayan, ciardellitwo, giardelli}.

\subsection{The content and contributions of this paper}

While many of the semantic choices underlying Propositional Dependence Logic \D
are natural and justified, we will identify in 
this paper a range of issues that are problematic.
One such issue is the interpretation of formulae that
use \emph{combinations} of determinacy operators $\dep$
and disjunctions $\vee$.
We will argue that while team semantics gives a sensible 
interpretation to formulae $\dep(p_1,...,p_k;q)$
as well as disjunctive formulae $\varphi\vee\psi$
free of operators $\dep$,
interpretations of certain simple formulae
that combine $\dep$ and $\vee$ become 
strange from the point of view of natural language.
We also discuss similar issues related to the team semantics interpretation of
negation $\neg$ (together with $\dep$).
%


Motivated by the interpretational problems of $\mathcal{D}$,
we develop here an alternative natural logic of
determinacy called \emph{Propositional Logic of Determinacy}
and denoted by $\mathcal{L}_{\dep}$.
%
%
%
%
%
%
The logics  $\D$ and $\mathcal{L}_{\dep}$ have essentially the same
set of formulae,\footnote{Strictly speaking,
$\mathcal{L}_{\dep}$ has more formulae than
$\mathcal{D}$ because of the typical syntactic restrictions
applied in $\mathcal{D}$ and team semantics in general.
This issue is discussed in more detail in the sections below.}
but the semantic approaches differ.
Instead of team semantics, the system $\mathcal{L}_{\dep}$ is
essentially based on Kripke
semantics. A formula $\dep(\varphi_1,...,\varphi_k;\psi)$ is true\footnote{This
truth definition was first suggested as an alternative to team
semantics in \cite{doubleteam1} and its 
later versions such as, e.g., \cite{doubleteam2}.} in a
possible world $w$ if the set $R(w)$ of \emph{accessible alternatives} of $w$ satisfies the
determinacy condition: for all $u,v\in R(w)$, if $u$ and $v$ agree
on the truth values of each $\varphi_i$, they also agree on
the truth value of $\psi$.
The Boolean connectives
as well as proposition symbols 
are interpreted in $\mathcal{L}_{\dep}$ in
the same way as in Kripke semantics, and thus $\mathcal{L}_{\dep}$ can
be regarded as a modal logic with a \emph{generalized modality} $\dep$
that talks about determinacy rather than possibility or necessity.
Mainly in order to keep matters technically simple in this
initial work on $\mathcal{L}_{\dep}$, we
assume the accessibility
relation $R$ to be the universal relation, so  the set of successors of
any world $w$ is in fact the whole domain of the model. At the end of the paper we briefly discuss the general case with other kinds of accessibility relations.
As an important part of our discourse on $\mathcal{L}_{\dep}$,
we present a range or arguments
for the naturalness of $\mathcal{L}_{\dep}$ in
relation to natural language.
In particular, we argue that $\mathcal{L}_{\dep}$ resolves
reasonably well the 
interpretational problems that we identify for $\mathcal{D}$. 

It turns out that Propositional Independence Logic \I
is burdened by virtually
the same issues as Propositional Dependence Logic \D, and these issues
can be remedied by defining
\emph{Propositional Logic of Independence} $\mathcal{L}_{\ind}$  
analogously to $\mathcal{L}_{\dep}$ but based on the
independence operator $\ind$ rather than the
dependence operator $\dep$.
In addition to introducing the logics 
$\mathcal{L}_{\dep}$ and $\mathcal{L}_{\ind}$ 
and discussing how they, as well as $\D$ and $\I$,
relate to each other and to natural language, we
also provide a comparative analysis of the
expressive powers of these four logics. We show that, while
$\D$ and $\I$ are both strictly contained in $\mathcal{L}_{\dep}$
and $\mathcal{L}_{\ind}$, the latter two logics are equally expressive.
In fact, we establish in Sections \ref{depsection} and \ref{indsection}
that $\mathcal{L}_{\dep}$ and $\mathcal{L}_{\ind}$
are \emph{maximally expressive}, or \emph{expressively
complete}, in a certain natural sense. 
Since it is well known from the literature on team semantics 
that $\D$ is strictly contained in $\I$, we eventually obtain a complete
classification of the relative expressive powers of the four logics.
We also prove that while both $\D$ and $\I$
translate into both $\mathcal{L}_{\dep}$ and $\mathcal{L}_{\ind}$,
there exists no \emph{compositional translation}\footnote{See
 Section \ref{compositionality} for the definition of compositional translations.}
from either of the team-semantics-based logics into
$\mathcal{L}_{\dep}$ or $\mathcal{L}_{\ind}$.
Intuitively, this indicates that team semantics and the
Kripke-style semantics of $\mathcal{L}_{\dep}$ and $\mathcal{L}_{\ind}$
are substantially different logical frameworks, at least from the technical point of view.
In addition to studying expressivity issues, we provide
sound and complete axiomatizations for $\mathcal{L}_{\dep}$
and $\mathcal{L}_{\ind}$. This turns out to be a relatively 
straightforward, yet interesting exercise, due to certain close
connections---to be identified
below---between $\mathcal{L}_{\dep}$, $\mathcal{L}_{\ind}$ and  
\emph{Contingency Logic} from \cite{montgomery}.
Contingency Logic is the variant of modal logic
with a modality $\cdep$, where $\cdep\varphi$  is interpreted to mean that $\varphi$ is \emph{non-contingent} at the state of evaluation, i.e., has the same truth value at every successor of that state. We also show that there do not exist finite
axiomatisations for $\mathcal{L}_{\dep}$
and $\mathcal{L}_{\ind}$, assuming a standard
notion of an axiomatic deduction system.
%
%

%
The structure of this paper is as follows.
In Section \ref{preliminaries} we provide the necessary
background for the rest of the paper, including definitions of
the logics $\mathcal{D}$ and $\mathcal{I}$ based on team semantics.
In Sections \ref{depsection} and \ref{indsection} we 
define the logics $\mathcal{L}_{\dep}$ and $\mathcal{L}_{\ind}$
and study their basic properties.
Section \ref{naturallanguageinterpretations}
analyses the logics $\mathcal{D}$, $\mathcal{I}$, $\mathcal{L}_{\dep}$
and $\mathcal{L}_{\ind}$ in relation to natural language.
Section \ref{express} investigates expressivity issues
and Section \ref{validities} provides 
sound and complete axiomatizations
for $\mathcal{L}_{\dep}$ and $\mathcal{L}_{\ind}$.
Section \ref{futuredirections} briefly discusses a range of
future research directions and concludes the paper.
The agenda and main idea of the current paper, i.e., investigating dependence logic
with a Kripke-style semantics instead of team semantics,
has been first mentioned and briefly motivated in
\cite{doubleteam1}
and its later incarnations such as, e.g., \cite{doubleteam2}.
Similar ideas have subsequently been developed in \cite{giardelli}.
The current paper develops the ideas of \cite{doubleteam1} in detail,
and furthermore, provides an extensive
collection of related technical results concerning the
expressivity and axiomatizability of $\mathcal{L}_{\dep}$
and related systems.
Section 6.7.2 of \cite{giardelli} contains an explicit comparison of $\mathcal{L}_{\dep}$
with a logic $\mathrm{InqB}^{\Rightarrow}$ that employs a Kripke-style approach (with some
extra machinery) to
semantics and thereby exhibits the same principle as $\mathcal{L}_{\dep}$ that dependence
statements are \emph{modal statements}. One of the main
differences between $\mathrm{InqB}^{\Rightarrow}$ and $\mathcal{L}_{\dep}$ is
that $\mathrm{InqB}^{\Rightarrow}$ contains an explicit machinery for
questions and thereby allows semantically elaborate assertions about, e.g., dependence.
Dependence statements in $\mathcal{L}_{\dep}$ are relations between
statements, while in $\mathrm{InqB}^{\Rightarrow}$, dependence statements are
construed as a relation between questions.
See Sections 6.7.2 and 6.5 of \cite{giardelli} for further details.

\section{Preliminaries and background}\label{preliminaries}

\subsection{Functional determinacy}

Determinacy of a function by a set of functions is a central
concept in this article. We will
define it here in the general setting, though will use it further only on Boolean functions. 

%
%
%

\begin{definition}\label{determinacy definition}
Let $k\in\mathbb{Z}_+$ be a positive integer and let $X,X_{1},\ldots,X_{k},U$ be nonempty sets.
Let $f: U \to X$ be function, and consider a family of functions 
$$\{ f_{i}: U \to X_{i} \mid i = 1,\ldots, k\}.$$ 
Given a set $W\subseteq U$, we say that \emph{the function $f$ is determined by the family of functions 
$\{f_{1},\ldots,f_{k}\}$ on $W$}, or that 
\emph{the family 
$\{f_{1},\ldots,f_{k}\}$
determines the function $f$ on $W$}, if there exists a function $F: X_{1}\times\ldots\times X_{k} \to X$ such that $f$ is the composition of $F$ and the functions $f_{1},\ldots,f_{k}$ on $W$, that is, $f(w) = F(f_{1}(w),\ldots, f_{k}(w))$ for every $w\in W$. 
We also fix this definition in the following special case:
we say that a function $g:U\rightarrow X$ is
determined by $\emptyset$ on $W\subseteq U$ if $g$ is
constant on $W$, i.e., $g(w_1) = g(w_2)$ for all $w_1,w_2\in W$.

This definition generalises straightforwardly to determinacy of a function by any family of functions
$\{ f_{i}: U \to X_{i} \mid i \in I\}$ indexed with an arbitrary (possibly infinite) set $I$.  
\end{definition}
Equivalently, $f$ is determined by the (possibly empty) family $\{f_{1},\ldots,f_{k}\}$ on $W$ if
and only if the following condition holds.

\medskip

\noindent
\textbf{Det}:
For every $w_{1}, w_{2} \in W$, it holds that
if $f_{i}(w_{1}) = f_{i}(w_{2})$ for each $ i = 1,\ldots, k$, then $f(w_{1}) = f(w_{2})$.

\medskip

Indeed, if $f$ is the composition of a mapping $F$ with $f_{1},\ldots,f_{k}$ on $W$ and  $f_{i}(w_{1}) = f_{i}(w_{2})$ for each $ i = 1,\ldots, k$, then 
$F(f_{1}(w_{1}),\ldots, f_{k}(w_{1})) = F(f_{1}(w_{2}),\ldots, f_{k}(w_{2}))$, i.e., $f(w_{1}) = f(w_{2})$. Conversely, if the condition \textbf{Det} holds, then we can define a mapping 
 {$F: X_{1}\times\ldots\times X_{k} \to X$} as follows: 
\[ F(x_{1},\ldots, x_{k}) =  
 \begin{cases}
    f(w) & \text{ if } x_{1} = f_{1}(w), \ldots,  x_{k} = f_{k}(w), \text{ for some } w\in W, \\
   x & \text{ otherwise (where } x\in X \text{ is arbitrarily fixed).}  
 \end{cases}
\]
The condition \textbf{Det} guarantees that this is well-defined.
\qed


We let $\Dep_{W}(f_{1}, \ldots, f_{k} ; f)$
denote the assertion that $f$ is determined by the family of functions 
$\{f_{1},\ldots,f_{k}\}$ on $W$.
When $k=0$, we write $\Dep_{W}(\emptyset\, ; f)$.
\subsection{Preliminaries concerning propositional logic}\label{prelipreliwhatever}
We typically denote formulae by $\fo,\fob,\foc,\fod,\alpha,\beta$ and sets of formulae by
$\Phi, \Psi$.
Throughout the paper, we
let $\prop$ denote a fixed countably infinite set of proposition symbols. All formulae considered in
the paper will be assumed to be built over $\prop$.

Let $\Phi =  \{\varphi_1,\ldots,\varphi_k\}$ be a finite nonempty
set of formulae. We define a set $\nff(\Phi)$ as follows. 
%

\begin{enumerate}
\item
For each subset $S\subseteq \{1,\ldots,k\}$,
let $\psi_{S}$ denote the conjunction $\psi_1\wedge\ldots\wedge\psi_k$
such that
\[
 \psi_i\ =\ \begin{cases}
    \varphi_i & \text{ if } i\in S,\\
    \neg\varphi_i & \text{ if } i\not\in S.
 \end{cases}
\]
\item
Let $\conj(\Phi)\, :=\, \{\, \psi_S\ |\ S\subseteq\{1,\ldots,k\}\, \}$.
The formulae in $\conj(\Phi)$ are called \emph{types over $\Phi$}.  

\item
Define $\nff(\Phi)\, :=\, \{\, \bigvee U\ |\ U\subseteq \conj(\Phi)\}$.
\end{enumerate}
We call the formulae in $\nff(\Phi)$ \emph{type normal form formulae  over $\Phi$}.

In the above definition, $\bigvee\emptyset$ is assumed to be the formula
$p\wedge\neg p$ for some proposition symbol $p\in\mathit{PROP}$. 
We will not assume that the logical constant symbols $\top,\bot$ are available
as primitives in the languages we consider. However, we will use these
symbols as abbreviations for the formulae $p\vee\neg p$ and $p\wedge\neg p$,
respectively.
For technical convenience, we define 
$\nff(\emptyset):= \{\top, \bot \}$ and $\mathit{Conj}(\emptyset) := \{\top\}$,
and thus we let $\top$  be the unique type over $\emptyset$.

When we write a formula $\fod(q_1,\ldots,q_k)$, we indicate that all the propositional variables occurring in $\fod$ are amongst $q_1,\ldots,q_k$. 
Given a formula $\fod = \fod(q_1,\ldots,q_k)$ and a tuple of formulae 
$\fo_1,\ldots,\fo_k$, we denote by $\fod(\fo_1,\ldots,\fo_k)$ the result of a uniform substitution of $\fo_1,\ldots,\fo_k$, respectively, for $q_1,\ldots,q_k$ in the formula $\fod$.

\begin{definition}
\label{def:equivalentreplacements}
Let $L$ be a logic. A relation of equivalence $\equiv_{L}$ between formulae of $L$ 
satisfies the \emph{equivalent replacements property} (ER)
(with respect to $\equiv_{L}$), if for every 
$L$-formula $\fod(q_1,\ldots,q_k)$ and all tuples of   
$\fo_1,\ldots,\fo_k,\fob_1,\ldots,\fob_k$ such that $\fo_i \equiv_{L} \fob_i$ for each $i = 1,\ldots,k$, it holds that $\fod(\fo_1,\ldots,\fo_k) \equiv_{L} \fod(\fob_1,\ldots,\fob_k)$. 
\end{definition}

%
%


\subsection{State description models}

Recall that $\prop$ denotes a fixed countably infinite set
of proposition symbols.
An \emph{assignment} for $\prop$ is any mapping $f: \prop \to \{0,1\}$.
Following Carnap, we occasionally call assignments
also \emph{state descriptions}. 
%


Any (possibly empty) set $W$  of  state descriptions 
will be called a  \emph{state description model} (or \emph{SD-model}).
The reason we include the empty model in the picture is
technical and related to the the fact that we will
deal, inter alia, with logics based on team semantics. In team semantics, as we will see, 
the \emph{empty team} plays an important role.

Especially in the more technical parts of the paper,
we often talk about \emph{points} or \emph{worlds} of $W$ 
rather than assignments or state descriptions.
%
%

\subsection{Universal modality}

In this paper we use a variant $\mathcal{L}_{\mathcal{U}}$
of the modal logic with the 
\emph{universal modality} from \cite{GP92}.
Formally, the syntax of
the logic $\mathcal{L}_{\mathcal{U}}$  is
given by the grammar
$$\varphi\ ::=\ p\ |\ \neg\varphi\ |\ (\varphi
\rightarrow\varphi)\ |\ \Ubox\varphi,$$
where $p\in\mathit{PROP}$. We define $\Udiam$ to be the dual of $\Ubox$, i.e.,
$\Udiam \varphi := \neg \Ubox \neg\varphi$.

The semantics of $\mathcal{L}_{\mathcal{U}}$ is defined with respect
to SD-models $W$ and assignments $w\in W$ as follows.
\[
\begin{array}{lll}
W,w\models_{\mathcal{U}} p & \Iff & w(p) = 1\\
W,w\models_{\mathcal{U}} \neg\varphi & \Iff & W,w
\not\models_{\mathcal{U}} \varphi\\
W,w\models_{\mathcal{U}} \varphi\rightarrow\psi& \Iff & W,w
\not\models_{\mathcal{U}}\varphi
\text{ or }W,w\models_{\mathcal{U}}\psi\\
W,w\models_{\mathcal{U}} \Ubox\varphi & \Iff  &
W,u\models_{\mathcal{U}}\varphi\text{ for all }u\in W
\end{array}
\]
Thus, 
$W,w\models_{\mathcal{U}} \Udiam \varphi$ iff  
$W,u\models_{\mathcal{U}}\varphi$ for some $u\in W$.
As customary in modal logic, we define $W\models_{\mathcal{U}}\varphi$ iff
$W,w\models_{\mathcal{U}}\varphi$ 
for all $w\in W$. We also define, in the standard way,
that $\models_{\mathcal{U}}\varphi$ iff $W\models_{\mathcal{U}}\varphi$
for all SD-models $W$.
\subsection{Propositional Dependence Logic $\D$}
%

%
We now define
\emph{Propositional Dependence Logic} $\D$, which first appeared in the literature  on team semantics as a syntactic
fragment of \emph{Modal Dependence Logic}, defined in
\cite{vaa07}. 
The paper \cite{vaa07} did not make explicit references to
 $\D$, and the semantics for
Modal Dependence Logic---including its propositional fragment---was
formulated in \cite{vaa07} in terms of Kripke models rather than SD-models.
%
%
%
%
%

Propositional Dependence Logic, with that explicit name, and variants
of the logic have recently been studied in, e.g.,
\cite{He14, Vir,Y14,vaanayan}.
The models for Propositional Dependence Logic are 
currently typically defined in the literature as SD-models where the set of proposition symbols  in consideration is finite; thus the related SD-models are sets of
\emph{finite} state descriptions, i.e., finite assignments.
For most  purposes, it makes little difference 
whether SD-models with finite or infinite sets of proposition symbols are used.
Similarly, it is mostly unimportant whether the models under consideration are Kripke models or SD-models. Such distinctions could, however,  become more important in extensions and variants of the logics considered in this paper.

%
%
%
%
The syntax of \emph{Propositional Dependence Logic} $\D$ 
is given by the following grammar (cf. \cite{vaa07}),
$$\varphi ::=  p\ |\ \neg p\ |\ \dep(p_1,\ldots,p_k;q)\ |\ \neg\dep(p_1,\ldots,p_k;q)\ |
\ (\varphi\vee\varphi)\ |\ (\varphi\wedge\varphi)$$
where $p,q, p_1,\ldots,p_k\in\prop$ and $k\in\mathbb{N}$.
When  considering formulae $\dep(p_1,\ldots,p_k;q)$ where $k=0$,
we write\footnote{Here $\cdep$ stands for `constancy' but, as Humberstone has noted, it might be confused with `contingency' which is, in fact, its opposite.} $\cdep\varphi$ 
instead of $\dep(;q)$ or $\dep(\epsilon\, ;\, q)$, where $\epsilon$ denotes the empty sequence of
proposition symbols.
We let $\mathit{FOR}(\D)$ denote the
set of formulae of  $\D$.
Note that formulae of $\D$
are in negation normal form, and the operator $\dep$
takes as inputs only proposition symbols.
Let $W$ be a state description model.
The semantics of Propositional Dependence Logic $\D$,
to be defined below, is based on \emph{team semantics}\footnote{
Throughout this paper, the turnstile $\Vdash$ is reserved
for logics based on team semantics and the turnstile $\models$
for logics with a Kripke-style semantics. The reader is warned
that we often use different turnstiles when comparing different logics,
and the related change of turnstile may sometimes be difficult to spot at first.},
%
%
given by the following clauses (cf. \cite{vaa07,vaanayan}).


%

\[
\begin{array}{lll}
W\Vdash_{\D} p & \Iff & w(p) = 1\text{ for all }w\in W\\
W\Vdash_{\D} \neg p & \Iff & w(p) = 0\text{ for all }w\in W\\
W\Vdash_{\D} \dep(p_1,\ldots,p_k;q) & \Iff  & 
\mbox{for } \text{ all } u,v\in W, \mbox{ if }u(p_i) = v(p_i)
\\
& &\text{holds }\text{for }
\mbox{all $i\leq k$, then } 
u(q) = v(q)
\\
%
%
%
W\Vdash_{\D} \neg \dep(p_1,\ldots,p_k;q) & \Iff\ & W=\emptyset \\
W\Vdash_{\D} \varphi\wedge\psi & \Iff & W\Vdash_{\D}\varphi\text{ and }
W\Vdash_{\D}\psi\\
W\Vdash_{\D} \varphi\vee\psi & \Iff & U\Vdash_{\D}\varphi\text{ and }
V\Vdash_{\D}\psi \text{ for some } U,V\subseteq W\\
& &\text{such that }U\cup V = W\\
\end{array}
\]
We observe that $W\Vdash\cdep p$ iff $v(p) = u(p)$ for all $u,v\in W$, i.e.,
the truth value of $p$ is constant in $W$.
The rationale for the truth condition of $\neg \dep(p_1,\ldots,p_k;q)$, 
as stated in \cite[p.24]{va07} and reiterated in \cite{LV13}, is as follows.
Suppose we 
wish to maintain the same duality
between $\dep(p_1,\ldots,p_k;q)$ and $\neg\dep(p_1,\ldots,p_k;q)$ as
the one that holds for the truth conditions for $p$ and $\neg p$.
We then end up with the definition that $W\Vdash_{\D} \neg \dep(p_1,\ldots,p_k;q)$
iff for all $u,v\in W$,
%
%
%
%
%
%
%
$$u(p_1) = v(p_1) \wedge \ldots \wedge u(p_k) = v(p_k) \wedge u(q) \not=v(q),$$
%
%
where the expression above 
%
is obtained by negating the condition provided in the truth definition of  
$\dep(p_1,\ldots,p_k;q)$ \emph{after} the
universal quantification of $u$ and $v$.\footnote{In other words,
if we abbreviate the truth condition for $p$ by
$\forall w\in W:\Psi$ and the truth condition
for $\dep(p_1,\ldots ,p_k;q)$ by $\forall u,v\in W:\Psi'$,
then the truth condition for $\neg p$ is
$\forall w\in W:\neg\Psi$ and thus we
define the condition for $\neg\dep(p_1,\ldots ,p_k;q)$
to be $\forall u,v\in W:\neg\Psi'$. How natural this
choice is exactly, is a question that calls for further analysis.
We will briefly discuss issues 
related to this matter in Section \ref{naturallanguageinterpretations}.
}  
%
We then observe that according to the
obtained definition, $W\Vdash_{\D} \neg \dep(p_1,\ldots,p_k;q)$ iff $W=\emptyset$.
Another possible rationale for the truth
definition of formulas $\neg\dep(p_1,\dots ,p_k;q)$
can be obtained via an algebraic interpretation of
formulae, as given in \cite{Roelofsen2013}. Here 
formulae are associated with non-empty and downwards
closed sets of $\mathrm{SD}$-models. Such sets correspond to
possible meanings of formulae. This approach leads to a Heyting algebra.
Negation is interpreted as the pseudo-complement operation,
and for the formula $\neg\dep(p_1, \dots, p_n;q)$ this gives an
interpretation that is equivalent to the clause given above, i.e., $\neg\dep(p_1, \dots, p_n;q)$ is
satisfied by an $\mathrm{SD}$-model $W$ iff $W=\emptyset$.
See \cite{Roelofsen2013} for further details.
While team semantics may seem strange at first, the following proposition
justifies its naturalness \emph{with respect to propositional
logic}, i.e., the sublanguage of $\D$ without formulae of the type 
$\dep(p_1,\ldots,p_k;q)$ and $\neg\dep(p_1,\ldots,p_k;q)$.
Recall that the turnstile $\models_{\mathcal{U}}$
refers to $\mathcal{L}_{\mathcal{U}}$.
\begin{proposition}\label{classicalproposition}
Let $\varphi$ be a formula of  propositional logic in
negation normal form.
Then $W\Vdash_{\D}\varphi$ iff
$W,w\models_{\mathcal{U}}\varphi$ for all $w\in W$. In other words,
 $W\Vdash_{\D}\varphi$ iff $W\models_{\mathcal{U}}\varphi$.
\end{proposition}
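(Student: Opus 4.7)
The plan is to proceed by structural induction on $\varphi$, exploiting the fact that $\varphi$ being a negation normal form propositional formula means its syntax tree only uses atoms $p$, negated atoms $\neg p$, conjunctions, and disjunctions, with no dependence atoms and no modalities. A key preliminary observation I would record first is a \emph{locality lemma}: for any such $\varphi$, the truth of $W,w\models_{\mathcal{U}}\varphi$ depends only on the assignment $w$ and not on $W$ (since none of the relevant connectives is $\Ubox$). Equivalently, for any SD-models $W,W'$ with $w\in W\cap W'$ we have $W,w\models_{\mathcal{U}}\varphi \Leftrightarrow W',w\models_{\mathcal{U}}\varphi$. This is proved by a trivial induction on $\varphi$ and will be used freely in the disjunction step.

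For the base cases $\varphi = p$ and $\varphi = \neg p$, the equivalence is immediate by unfolding the two semantic clauses side by side. For the conjunction case $\varphi = \fob\wedge\foc$, both semantics distribute over $\wedge$ pointwise, so the inductive step is a direct rewriting.

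The main work sits in the disjunction case $\varphi = \fob\vee\foc$ (recall $\fob\vee\foc$ in $\mathcal{L}_{\mathcal{U}}$ is the abbreviation $\neg\fob\to\foc$, evaluated pointwise as in classical propositional logic). For the direction $W\Vdash_{\D}\fob\vee\foc \Rightarrow W\models_{\mathcal{U}}\fob\vee\foc$, take the splitting $W=U\cup V$ with $U\Vdash_{\D}\fob$ and $V\Vdash_{\D}\foc$ given by team semantics; the induction hypothesis yields $U,u\models_{\mathcal{U}}\fob$ for all $u\in U$ and $V,v\models_{\mathcal{U}}\foc$ for all $v\in V$, and by the locality lemma these upgrade to $W,u\models_{\mathcal{U}}\fob$ and $W,v\models_{\mathcal{U}}\foc$, so every point of $W$ satisfies $\fob\vee\foc$. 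Conversely, given $W\models_{\mathcal{U}}\fob\vee\foc$, define
\[ U = \{\, w\in W \mid W,w\models_{\mathcal{U}}\fob\,\}, \qquad V = W\setminus U. \]
Then $U\cup V = W$ and every $v\in V$ satisfies $W,v\models_{\mathcal{U}}\foc$. By locality, $U,u\models_{\mathcal{U}}\fob$ for all $u\in U$ and $V,v\models_{\mathcal{U}}\foc$ for all $v\in V$, so the induction hypothesis gives $U\Vdash_{\D}\fob$ and $V\Vdash_{\D}\foc$, hence $W\Vdash_{\D}\fob\vee\foc$.

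The only mild subtlety, rather than a genuine obstacle, is making sure the empty-team cases go through: when $U$ or $V$ above is empty we need $\emptyset\Vdash_{\D}\fob$ (respectively $\emptyset\Vdash_{\D}\foc$), which holds vacuously by inspection of the team-semantic clauses for atoms, conjunction and disjunction. The final statement $W\Vdash_{\D}\varphi \Leftrightarrow W\models_{\mathcal{U}}\varphi$ then follows at once by unfolding the definition $W\models_{\mathcal{U}}\varphi \Leftrightarrow \forall w\in W\;(W,w\models_{\mathcal{U}}\varphi)$.
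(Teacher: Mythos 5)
Your proof is correct. The paper in fact states Proposition \ref{classicalproposition} without proof (it is the well-known ``flatness'' of the $\dep$-free fragment under team semantics), and your structural induction is exactly the standard argument one would supply: the locality observation for $\Ubox$-free formulae, the splitting $U=\{w\in W\mid W,w\models_{\mathcal{U}}\psi\}$, $V=W\setminus U$ in the disjunction case, and the vacuous treatment of the empty team are all handled properly.
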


This  proposition shows that
team semantics simply lifts the semantics of propositional logic
(in negation normal form) from the level of
individual assignments onto the level of \emph{sets} of assignments.
Thus, team semantics can be used in scenarios
where assertions (encoded by formulae of
propositional logic) are made about \emph{sets} of possible worlds,
and the intention of the assertions is to claim that
any world in the set satisfies the formula.
We will consider examples of such scenarios in
Section \ref{naturallanguageinterpretations},
where we discuss the relation between natural language
and the logic $\D$.

%

\subsection{Propositional Independence Logic \I}\label{indeplogic}

We next present \emph{Propositional Independence Logic} $\I$
which was conceived as a fragment of
\emph{Modal Independence Logic} in \cite{Konti14}
and studied further in, e.g., \cite{Kont15}.
Propositional Independence Logic
relates to \emph{Independence Logic} of \cite{GV13} in
the same way \D relates to V\"{a}\"{a}n\"{a}nen's 
Dependence Logic.
 There are, of course, different kinds of notions of (propositional) independence, and the
logic \I provides a formal approach to
\emph{a particular such notion}.
The logic is similar in spirit to $\D$, being based on team semantics.
The syntax of $\I$ is given by the following grammar.
$$\varphi ::=  p\ |\ \neg p\ |\  (p_1,\ldots,p_k)\, \ind_{(r_1,\ldots,r_m)}
(q_1,\ldots,q_n)\ |\  
\ (\varphi\vee\varphi)\ |\ (\varphi\wedge\varphi)$$
where $p$ and each of the symbols $p_i,r_i,q_i$
are proposition symbols in $\prop$. We denote by $\mathit{FOR}(\I)$ the
set of formulae of $\I$.
The numbers $k$ and $n$ are positive integers and $m$ a non-negative integer.
When $m = 0$, the formula
$$(p_1,\ldots,p_k)\, \ind_{(r_1,\ldots,r_m)}
(q_1,\ldots,q_n)$$
is written $(p_1,\ldots,p_k)\, \ind\, 
(q_1,\ldots,q_n)$. Also, when any of the three tuples of
proposition symbols in the formula $(p_1,\ldots,p_k)\, \ind_{(r_1,\ldots,r_m)}
(q_1,\ldots,q_n)$ contains exactly one formula, the brackets
around the tuple are usually left out, as
for example in the formula $p\, \ind_{r}\, q$.
Notice that, in line with the definition of $\I$ in \cite{Konti14, Kont15},
negation and $\ind$ can only be applied to propositional symbols. In particular, 
formulae $$(p_1,\ldots,p_k)\ind_{(r_1,\ldots,r_m)} (q_1,\ldots,q_n)$$ may not occur negated. 
The same convention applies to \emph{independence atoms} in
Independence Logic \cite{GV13}.
%
%
%
%

%
The semantics of $\I$ is defined with 
respect to SD-models.
Intuitively, the formula 
$$(p_1,\ldots,p_k)\ind_{(r_1,\ldots,r_m)} (q_1,\ldots,q_n)$$
asserts that when the truth values of the proposition
symbols $r_1,\ldots,r_m$ are fixed, then the
tuples of truth values of $(p_1,\ldots,p_k)$
and $(q_1,\ldots,q_n)$ are \emph{informationally
independent} in a way explained further  
after the formal truth definition of $\I$.
%

%

%
We use $\Vdash_{\I}$ as the semantic
turnstile of $\I$.
The formal semantic clauses of $\I$  
for propositional literals and Boolean connectives
are exactly the same as those for the logic $\D$, while 
the semantics of the formulae
$$(p_1,\ldots,p_k)\ind_{(r_1,\ldots,r_m)} (q_1,\ldots,q_n)$$
is defined as follows. 
Let $W$ be a state description model.
We define
$$W\Vdash_{\I}
(p_1,\ldots,p_k)\, \ind_{(r_1,\ldots,r_m)}(q_1,\ldots,q_n)$$
iff for all $w_{1},w_{2}\in W$ that agree on $r_1,\ldots,r_m$ (i.e., 
are such that $w_{1}(r_i)=w_{2}(r_i)$ for each $i\in \{1, \ldots, m\}$), 
there exists some $v\in W$ such that 
\[
\big(\bigwedge_{i\leq m}
v(r_i)= w_1(r_i)\ \wedge\
\bigwedge_{i\leq k} v(p_i) = w_1(p_i) \wedge
\bigwedge_{i\leq n} v(q_i)=w_2(q_i) \big). 
\]
Thus, the formula $(p_1,\ldots,p_k)\ind_{(r_1,\ldots,r_m)} (q_1,\ldots,q_n)$ 
asserts that for every tuple $(b_1,\ldots,b_m)$ of truth values 
for the propositions $r_1,\ldots,r_m$, if we restrict attention to the 
set $S$ of those assignments in $W$ that
assign the values $b_1,\ldots,b_m$ to $r_1,\ldots,r_m$,
then the following condition holds:
the tuples of truth values for
$(p_1,\ldots,p_k)$ and $(q_1,\ldots,q_n)$ are informationally
independent of each other on $S$
in the sense that for every two assignments $w_{1},w_{2}\in S$, there is  an
assignment $v\in S$ that combines $w_{1}$ restricted to $(p_1,\ldots,p_k)$ with $w_{2}$ restricted to $(q_1,\ldots,q_n)$.
%
%
%
%

%
It is worth noting that, intuitively, the formula $p\, \ind\, q$ can be interpreted to state that \emph{nothing new can be concluded} about the truth value of $p$ in a possible world $w$
by finding out the truth value of $q$ in $w$ (and vice versa): an agent who fully knows the model $W$ but has no idea which $w\in W$ is
the \emph{actual} world, cannot conclude anything new 
about the truth value of $q$ in
the actual world by learning the truth value of $p$ in that world.

This interpretation explains the initially perhaps counterintuitive fact that 
the formula $p\, \ind\, p$ is satisfiable even in nonempty models: 
$W\Vdash_{\I} p\, \ind\, p$ holds iff $p$ is constant in $W$, i.e., if every
assignment in $W$ gives the same truth value to $p$.
Indeed, $p$ being constant
means exactly that nothing \emph{new}
can be concluded about the truth value $p$ in the actual world by learning the
truth value of $p$ in the actual world.
If $p$ was not constant, the truth value  of $p$ in the actual world would
obviously reveal new information.
It is worth  pointing out here that in the  semantics of $\D$ and $\I$,
there is no explicit actual world present.
Next we will consider the logics $\mathcal{L}_{\dep}$ and $\mathcal{L}_{\ind}$
whose semantics are given in a way similar to Kripke semantics in
terms of pairs $(W,w)$, where $W$ is a SD-model
and $w\in W$ an assignment which can be
taken to correspond to an appointed actual world.
\section{Propositional logic of determinacy $\dlang$}\label{depsection}
We now introduce a new logic which extends propositional logic PL with 
dependence formulae $\dep(\varphi_1,\ldots,\varphi_k ; \psi)$, where
$\varphi_1,\ldots,\varphi_k,\psi$ are arbitrary formulae in the language.
We call this logic 
\emph{Propositional Logic of Determinacy}
and denote it by $\mathcal{L}_{\dep}$.

Recall that $\prop$ denotes a fixed countably infinite set of proposition symbols. 
The formulae of $\dlang$ over $\prop$ 
are defined by the following grammar.
\[
\varphi ::= \ p\ |\ \neg\varphi\ |\
(\varphi\rightarrow\varphi)\ |\
\dep(\varphi,\ldots,\varphi\, ;\varphi)
\]
where $p\in\prop$, and where the tuple $(\varphi,\ldots,\varphi\, ;\varphi)$
contains $k + 1$ formulae for any $k\in\mathbb{N}$.
We consider the Boolean connectives $\land, \lor, \leftrightarrow$ as
abreviations in the usual way. When $k = 0$, we write $\cdep\varphi$
instead of $\dep(\epsilon ;\varphi)$, where $\epsilon$ is the empty sequence of formulae. 
We let $\dfor$ denote the set of formulae of $\dlang$. Notice indeed that
each of the operators $\neg, \rightarrow, \dep$ can be freely used in the language of $\mathcal{L}_{\dep}$; no syntactic restrictions apply.
Intuitively, $\dep(\varphi_{1},\ldots,\varphi_{k}; \psi)$ means that the truth value of $\fo$ is
determined by the set of truth values of the formulae $\varphi_{1},\ldots,\varphi_{k}$ on the
SD-model in consideration.  
In particular, $\cdep\fo$  means that the
truth value of $\fo$ is constant in the model.

We define truth of an \dlang-formula $\fo$ at a state
description $w$ in an SD-model $W$, denoted\footnote{We
sometimes write $W,w \models_{\mathcal{L}_{\dep}} \fo$ instead or even
simply $W,w \models\fo$.} $W,w \models_{\dep} \fo$,
inductively on the structure of formulae as follows.

\[
\begin{array}{lll}
W,w\models_{\dep} p & \Iff & w(p) = 1\\
W,w\models_{\dep} \neg\varphi & \Iff & W,w\not\models_{\dep}\varphi\\
W,w\models_{\dep} \varphi\rightarrow\psi& \Iff & W,w\not\models_{\dep}\varphi
\text{ or }W,w\models_{\dep}\psi\\
W,w\models_{\dep} \dep(\varphi_1,\ldots,\varphi_k;\psi) & \Iff  & 
\mbox{for } \text{ all } u,v\in W, \mbox{ if } 
\text{ the }\text{ equivalence }\\
& & (W,u\models_{\dep} \varphi_i\Leftrightarrow W,v\models_{\dep} \varphi_i)
\text{ }\text{ holds }\text{ for}  \\ 
                  &   & 
\mbox{all $i\leq k$, then } 
(W,u\models_{\dep} \psi \Leftrightarrow W,v\models_{\dep} \psi)
\end{array}
\]


When an SD-model $W$ is fixed, 
the assignments $w\in W$ can be
extended to truth assignments $w^{*}: \dfor \to \{0,1\}$
in the natural way. The same, of course, 
applies to $\mathcal{L}_{\mathcal{U}}$.
%

Note that the truth definition of $\dep(\fo_{1},\ldots,\fo_{k};\fo)$ extends the semantics of \dep in the logic \D and does not here depend on the current state description $w$ but only on the entire SD-model $W$. In particular, the semantics of $\cdep$ is as
follows: $W,w \models \cdep\fo$ iff for all $u,v\in W$, we have that $W,u\models\fo \Iff W,v\models\fo$. In other words, $W,w \models \cdep\fo$ iff the truth value of $\fo$ is constant in the model. 

Let us rephrase the semantic definition  of  $\dep$ above in terms of explicit functional dependence. 
Every formula $\fo \in \dfor$ determines the
function $\tr_{\fo}: W \to \{0,1\}$
such that $\tr_{\varphi}(w) := w^*(\varphi)$.
%
%
%
Now we have $W,w \models \dep(\fo_{1},\ldots,\fo_{n}; \fo)$ if and only if  
$\tr_\fo$ is determined by $\tr_{\fo_1},\ldots,\tr_{\fo_n}$ on $W$ 
in the sense of Definition \ref{determinacy definition}, i.e., 
$$W,w \models \dep(\fo_{1},\ldots,\fo_{n}\, ;\, \fo) \Iff  \Dep_W(\tr_{\fo_{1}},\ldots,
\tr_{\fo_{n}}\, ;\, \tr_\fo).$$
The semantics of $\mathcal{L}_{\dep}$ is similar to Kripke semantics (in
the case where the accessibility relation of a model $W$ is the universal
relation $W\times W$). Therefore it is natural to define for $\mathcal{L}_{\dep}$
the notions of 
validity
and
satisfiability analogously to the corresponding definitions in modal logic:
\begin{definition}\label{validityetc.}
Let $\fo\in \dfor$.
\begin{enumerate}
\item $\fo$ is \emph{valid in an SD-model $W$}  
if $W,w \models \fo$ for every $w\in W$. We
write $W\models\varphi$ if $\varphi$ is valid in $W$.

\item $\fo$ is \emph{valid} (or \emph{SD-valid}),
if $W \models \fo$ for every SD-model $W$.
We write $\models\varphi$ if $\varphi$ is valid.
 
\item $\fo$ is \emph{satisfiable}
if $W,w \models \fo$ for some SD-model $W\not= \emptyset$ and some $w\in W$.
\end{enumerate}
\end{definition}

The following definition is analogous to the definition of local
equivalence in modal logic:
\begin{definition}\label{equivalencedefinition}
Let $\varphi$ and $\psi$ be formulae of $\mathcal{L}_{\dep}$.
We write $\varphi\equiv_{\mathcal{L}_{\dep}}\psi$ if
the equivalence $W,w\models \varphi\Leftrightarrow W,w\models\psi$
holds for all SD-models $W$ and all $w\in W$. We will often omit 
the subscript $\mathcal{L}_{\dep}$ when no confusion arises.
\end{definition}
We note that validity in an SD-model can be expressed locally in \dlang in
the sense that
for all assignments $w\in W$, we have $W\models \fo$ iff $W,w\models \fo \land \cdep \fo$.
If we know that $W\not=\emptyset$,
then $W\models \fo$ iff $W,w\models \fo \land \cdep \fo$ for some $w\in W$.
Thus $\fo \land \cdep \fo$ plays the role of the \emph{universal modality}.
Hence we define the \emph{notation} $\ubox\varphi := \varphi\wedge\cdep\varphi$
as well as
$\udiam\varphi := \neg\ubox\neg\varphi$ (which can easily be seen to be
equivalent to $\varphi\vee\neg\cdep\varphi$). Note that $\ubox\varphi$
and $\udiam\varphi$ are simply abbreviations of formulae of $\mathcal{L}_{\dep}$,
they are not formulae of $\mathcal{L}_{\mathcal{U}}$. (The box and diamond
operators in $\mathcal{L}_{\mathcal{U}}$ are denoted by $\Ubox$ and $\Udiam$
instead of $\ubox$ and $\udiam$.)
%

\vcut{
The language of \emph{propositional logic with the universal modality} $\ulang$ is
given by the grammar
$$\varphi\, ::=\, p\, |\, \neg\varphi\, |\, (\varphi\rightarrow\varphi)\, |\, \ubox\varphi,$$
where $p\in\prop$. The semantics can be given classically, with respect to SD-models treated as Kripke models with universal accessibility relation, by the clause for $\ubox$: 
\[
\begin{array}{lll}
%
W,u\models\ubox\varphi & \Iff & W,u\models\varphi \text{ for all } u\in W
\end{array}
\]

Again, we write $W\models\varphi$ iff  $W,w\models\varphi$ for all $w\in W$.
}

\medskip
In turn, \dep is expressible in terms of $\ubox$ in the following sense:

\begin{proposition}\label{prop:Dep}
$$
\dep(\varphi_1,\ldots, \varphi_k; \psi)\ \equiv
\bigvee\limits_{\chi\, \in\, \nff(\varphi_1,\ldots,\varphi_k)}
\ubox (\chi\leftrightarrow\psi).
$$
\end{proposition}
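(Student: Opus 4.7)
The plan is to prove the equivalence in both directions, exploiting the fact that every formula in $\nff(\varphi_1,\ldots,\varphi_k)$ is, as a Boolean combination, completely determined at each world by the truth values of $\varphi_1,\ldots,\varphi_k$ at that world.

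For the direction $(\Leftarrow)$, suppose $W,w \models \ubox(\chi\leftrightarrow\psi)$ for some $\chi\in\nff(\varphi_1,\ldots,\varphi_k)$. Since $\chi$ is a disjunction of types over $\{\varphi_1,\ldots,\varphi_k\}$, its truth value at any $u\in W$ depends only on the truth values of $\varphi_1,\ldots,\varphi_k$ at $u$. Hence if $u,v\in W$ satisfy $W,u\models\varphi_i \Leftrightarrow W,v\models\varphi_i$ for every $i\leq k$, then $W,u\models\chi \Leftrightarrow W,v\models\chi$, and by the universal equivalence of $\chi$ and $\psi$, also $W,u\models\psi \Leftrightarrow W,v\models\psi$. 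This is exactly the semantic clause for $\dep(\varphi_1,\ldots,\varphi_k;\psi)$.

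For the direction $(\Rightarrow)$, assume $W,w \models \dep(\varphi_1,\ldots,\varphi_k;\psi)$. The idea is to build an explicit Boolean representative for $\psi$ in terms of $\varphi_1,\ldots,\varphi_k$. Let
\[
U \ :=\ \bigl\{\, \psi_S\in\conj(\varphi_1,\ldots,\varphi_k)\ \bigm|\ \exists\, u\in W \text{ with } W,u\models \psi_S\wedge\psi\,\bigr\},
\]
and set $\chi := \bigvee U \in \nff(\varphi_1,\ldots,\varphi_k)$. I claim $W,w'\models \chi\leftrightarrow\psi$ for every $w'\in W$, which yields $W,w\models\ubox(\chi\leftrightarrow\psi)$. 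If $W,w'\models\chi$, then $W,w'\models \psi_S$ for some $\psi_S\in U$; by definition of $U$ there is $u\in W$ with $W,u\models\psi_S\wedge\psi$. Since $w'$ and $u$ satisfy the same type $\psi_S$, they agree on every $\varphi_i$, so the determinacy hypothesis yields $W,w'\models\psi$. Conversely, if $W,w'\models\psi$, let $\psi_S$ be the unique type satisfied by $w'$ (determined by which $\varphi_i$ hold at $w'$); then $\psi_S\in U$ with $w'$ itself as witness, so $W,w'\models\chi$.

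The main technical point is just the careful bookkeeping of edge cases. When $k=0$, we have $\conj(\emptyset)=\{\top\}$ and $\nff(\emptyset)=\{\top,\bot\}$, so the right-hand side reduces to $\ubox(\top\leftrightarrow\psi)\vee\ubox(\bot\leftrightarrow\psi)$, which correctly expresses that $\psi$ is constant on $W$. When $U=\emptyset$, the construction gives $\chi = \bigvee\emptyset$, which by convention is $\bot$; in that case no $u\in W$ satisfies $\psi$, so $\psi\leftrightarrow\bot$ holds throughout $W$, as required. Once these edge cases are handled, no further difficulties arise.
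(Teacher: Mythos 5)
Your proof is correct and follows essentially the same route as the paper's: the right-to-left direction uses the observation that a disjunction of types is invariant under agreement on $\varphi_1,\ldots,\varphi_k$, and the left-to-right direction constructs the same witness disjunct (your $\bigvee U$ is exactly the paper's $\chi_{(W,\psi)}$, the disjunction of the types realized by worlds satisfying $\psi$). Your explicit treatment of the $k=0$ and $U=\emptyset$ edge cases is a welcome addition but does not change the argument.
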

\begin{proof}
Let us denote the formula on the right hand side by $\Delta(\varphi_1,\ldots, \varphi_k; \psi)$. 
Consider any SD-model $W$. Define $\extn{\psi}_{W}
 := \{ w\in W \mid W,w\models \psi \}$. 
For each  $w\in W$, let $\chi_{w}$ be the unique type over 
$\{ \varphi_1,\ldots, \varphi_k \}$ which is true at $w$; in the case $k=0$,
let $\chi_w :=\top$. Define 
\[ \chi_{(W,\psi)} := \bigvee \{\chi_{w} \mid w  \in \extn{\psi}_{W} \} .\]

Now, suppose $W,w \models \dep(\varphi_1,\ldots, \varphi_k; \psi)$. We claim that  $W,w \models \ubox (\chi_{(W,\psi)} \leftrightarrow\psi)$.
Indeed, take any $v\in W$. 
If $W, v \models \psi$, then $v\in \extn{\psi}_{W}$ and $W, v \models \chi_{v}$ by definition, whence  $W, v \models \chi_{(W,\psi)}$. 
Conversely, suppose  $W, v \models \chi_{(W,\psi)}$. Then 
$W, v \models \chi_{u}$ for some $u\in \extn{\psi}_{W}$.  Therefore 
$W, u \models \psi$ and $\chi_{v} = \chi_{u}$, i.e., for each $i = 1,\ldots, k$, we have that $W,u\models\varphi_i$  iff  $W,v \models\varphi_i$. Since  $W,w\models \dep(\varphi_1,\ldots, \varphi_k; \psi)$, we thus have $W,u\models\psi$  iff $W,v \models \psi $. Therefore we
infer that $W, v \models \psi$. Thus $W, v \models \chi_{(W,\psi)} \leftrightarrow\psi$ for every $v\in W$. Hence $W,w \models \ubox (\chi_{(W,\psi)} \leftrightarrow\psi)$, so $W,w \models \Delta(\varphi_1,\ldots, \varphi_k; \psi)$.

Conversely, suppose $W,w\models \Delta(\varphi_1,\ldots, \varphi_k; \psi)$. 
Thus $W,w\models \ubox (\chi \leftrightarrow\psi)$ for some formula 
$\chi\, \in\, \nff(\varphi_1,\ldots,\varphi_k)$.  Suppose $u,v \in W$ are such that 
$W,u\models\varphi_i$  iff  $W,v \models\varphi_i$ for each $i = 1,\ldots, k$.  Then 
$W,u\models\chi$  iff  $W,v \models\chi$, whence  $W,u\models\psi$  iff  $W,v \models\psi$ because $W,u \models \chi \leftrightarrow\psi$ and  
$W,v \models \chi \leftrightarrow\psi$. Thus we have proved that $W,w\models \dep(\varphi_1,\ldots, \varphi_k; \psi)$.
\end{proof}

Proposition \ref{prop:Dep} establishes that the operator $\dep$ is
definable in terms of $\ubox$.
As a particular case of this definability,
we obtain that $\cdep \psi \equiv \ubox \psi \lor   \ubox \lnot \psi$.
Since $\dep$ and $\ubox$ are interdefinable, it follows that the logics $\mathcal{L}_{\dep}$ and $\mathcal{L}_{\mathcal{U}}$
are clearly equiexpressive: we can translate $\mathcal{L}_{\dep}$
into $\mathcal{L}_{\mathcal{U}}$ by making use of Proposition \ref{prop:Dep},
and, on the other hand, we can translate $\mathcal{L}_{\mathcal{U}}$
into $\mathcal{L}_{\dep}$ with the help of our earlier observation that
the formula $\varphi\wedge\cdep\varphi$ simulates the
universal modality. Below we will make significant use of
this interdefinability of the operators $\dep$ and $\ubox$.
The above results suggest obvious equivalence-preserving
translations between $\mathcal{L}_{\dep}$ and $\mathcal{L}_{\mathcal{U}}$.
However, while this connection
between $\mathcal{L}_{\dep}$ and $\mathcal{L}_{\mathcal{U}}$ is
interesting and useful,
the primary aim of this paper is to study the notion of
determinacy and the operator $\dep$ taken as a primitive.
Indeed, one of our principal objectives is to compare Propositional
Dependence Logic $\mathcal{D}$ and the new logic $\mathcal{L}_{\dep}$ and
investigate how well they can be used in order to formalise statements about
propositional determinacy and how well the two logics
relate to natural language. The interdefinability of $\mathcal{L}_{\dep}$
and $\mathcal{L}_{\mathcal{U}}$
can thus be regarded as an interesting fact that nevertheless
will play mainly a technical role in this paper.
Furthermore, neither of the above mentioned translations is polynomial,
and in the more general framework of Kripke semantics (see Section \ref{Kripke} below) it is
not always possible to define the box modality $\Box$ in terms of the
determinacy operator $\dep$. We leave
investigations of that more general framework for the future.
%
%
%

%
%

%
The following is a straightforward
observation about $\mathcal{L}_{\dep}$ which we will use later on.
%

%
%

%
\begin{lemma}
\label{lem:Dep-ER}
The logic \dlang satisfies the equivalent replacement property ER with
respect to the equivalence given in Definition \ref{equivalencedefinition}.
\end{lemma}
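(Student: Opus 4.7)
The plan is a routine structural induction on the formula $\theta(q_1,\ldots,q_k)$, where the only case requiring any thought is the one for the operator $\dep$. Fix formulae $\varphi_1,\ldots,\varphi_k,\fob_1,\ldots,\fob_k$ with $\fo_i\equiv_{\mathcal{L}_{\dep}}\fob_i$ for every $i$, and let us show, by induction on $\theta$, that for every SD-model $W$ and every $w\in W$,
\[
W,w\models\theta(\fo_1,\ldots,\fo_k)\ \Longleftrightarrow\ W,w\models\theta(\fob_1,\ldots,\fob_k).
\]

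The base cases are immediate: if $\theta=q_i$ then both sides reduce to $W,w\models\fo_i \Leftrightarrow W,w\models\fob_i$, which holds by hypothesis; if $\theta$ is a proposition symbol $p\notin\{q_1,\ldots,q_k\}$, substitution has no effect. The induction steps for $\neg$ and $\rightarrow$ follow immediately from the inductive hypothesis applied to the (one or two) immediate subformulae, together with the corresponding truth clauses, which depend only on the truth values of the subformulae at $(W,w)$.

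The only nontrivial step is the $\dep$-case. Suppose $\theta=\dep(\theta_1,\ldots,\theta_n;\theta_0)$, and write $\theta_j^{\fo}:=\theta_j(\fo_1,\ldots,\fo_k)$ and $\theta_j^{\fob}:=\theta_j(\fob_1,\ldots,\fob_k)$ for $j=0,\ldots,n$. By the inductive hypothesis applied to each $\theta_j$, we have $\theta_j^{\fo}\equiv_{\mathcal{L}_{\dep}}\theta_j^{\fob}$. The crucial point is that this equivalence, although stated pointwise in Definition \ref{equivalencedefinition}, holds at \emph{every} pair $(W,u)$ with $u\in W$. Hence, for the given SD-model $W$ and every $u\in W$,
\[
W,u\models\theta_j^{\fo}\ \Longleftrightarrow\ W,u\models\theta_j^{\fob}\qquad(j=0,\ldots,n).
\]
Plugging these pointwise equivalences into the semantic clause of $\dep$, the determinacy condition ``for all $u,v\in W$, if $W,u\models\theta_i^{\fo}\Leftrightarrow W,v\models\theta_i^{\fo}$ for all $i\leq n$, then $W,u\models\theta_0^{\fo}\Leftrightarrow W,v\models\theta_0^{\fo}$'' is equivalent to the same condition with $\theta_j^{\fo}$ replaced by $\theta_j^{\fob}$ throughout. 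Therefore $W,w\models\dep(\theta_1^{\fo},\ldots,\theta_n^{\fo};\theta_0^{\fo})$ iff $W,w\models\dep(\theta_1^{\fob},\ldots,\theta_n^{\fob};\theta_0^{\fob})$, completing the induction.

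There is no real obstacle here; the only subtlety worth noting is that the semantic clause for $\dep$ quantifies over all $u,v\in W$, so one must use that the inductive equivalence is global (i.e. holds at each point of $W$) rather than only at the distinguished world $w$. This is, however, exactly what Definition \ref{equivalencedefinition} supplies.
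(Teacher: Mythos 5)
Your proof is correct, and it is essentially the argument the paper has in mind: the paper states this lemma without proof, calling it a straightforward observation, and the routine structural induction you give—with the one genuine point being that the clause for $\dep$ quantifies over all of $W$, so one needs the inductive equivalence globally at every point of $W$ (which Definition \ref{equivalencedefinition} provides)—is exactly the intended argument. Nothing is missing.
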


We say that a class $\mathcal{C}$ of SD-models is
\emph{definable} in $\dlang$ (resp., in $\ulang$), if
there exists a formula $\varphi$ of $\dlang$ (resp., of $\ulang$)
such that $W\models\varphi$ iff $W\in\mathcal{C}$.

Let $w:\mathit{PROP}\rightarrow \{0,1\}$ be an assignment function,
and let $\Phi\subseteq\prop$.
%
%
We let $w|_{\Phi}$ denote the restriction of $w$
to $\Phi$, i.e., the function
$f:\Phi\rightarrow \{0,1\}$ defined so that $f(p) = w(p)$ for all $p\in\Phi$.
We define $W |_{\Phi} := \{\, w|_{\Phi}\ |\ w\in W\, \}$.
\begin{definition}\label{equivalentmodelsdefinition}
Two SD-models $W_{1}$ and $W_{2}$ are \emph{$\Phi$-equivalent}, denoted $W_{1} \equiv_{\Phi} W_{2}$, 
if  $W_{1} |_{\Phi} = W_{2} |_{\Phi}$. 
\end{definition}
\begin{definition}\label{finpropequivalence}
A class of SD-models $\mathcal{C}$ is
\emph{closed under finite propositional equivalence}, if the following conditions hold.
\begin{enumerate}
\item
$\emptyset\in\mathcal{C}$. 
\item
There exists a finite set
$\Phi\subseteq\prop$ such that for all nonempty SD-models $W_{1}$ and $W_{2}$, if
$W_{1}\in\mathcal{C}$ and $W_{1} \equiv_{\Phi} W_{2}$, then $W_{2}\in\mathcal{C}$.
\end{enumerate}
\end{definition}
The first condition above has been included for technical convenience.
Note that the empty model satisfies every
formula of $\ulang$ and $\mathcal{L}_{\dep}$.
%


%
\begin{proposition}\label{expressivitycharacterization}
A class of SD-models is definable in $\ulang$
iff it is closed under finite propositional equivalence.
\end{proposition}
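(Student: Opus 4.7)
The plan is to prove the two directions separately. The forward implication (definable implies closed) is a routine locality argument, and the backward implication (closed implies definable) is a normal-form construction exploiting finiteness of the witness $\Phi$.

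For the forward direction, suppose $\varphi \in \ulang$ defines $\mathcal{C}$, and let $\Phi$ be the finite set of proposition symbols occurring in $\varphi$. Since $W \models \psi$ holds vacuously for $W = \emptyset$ for every formula $\psi$, the empty model lies in $\mathcal{C}$. The key auxiliary fact is a locality lemma: whenever $W_1 \equiv_\Phi W_2$ and $w_1 \in W_1$, $w_2 \in W_2$ satisfy $w_1|_\Phi = w_2|_\Phi$, we have $W_1, w_1 \models \varphi \Leftrightarrow W_2, w_2 \models \varphi$. This is proved by straightforward induction on $\varphi$: the atomic case uses that proposition symbols appearing in $\varphi$ lie in $\Phi$; the Boolean cases are immediate; and the case $\Ubox\psi$ uses that $W_1|_\Phi = W_2|_\Phi$ pairs each $u_1 \in W_1$ with some $u_2 \in W_2$ of the same $\Phi$-restriction (and conversely), so the inductive hypothesis transfers $\psi$ between the two models. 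Closure under $\equiv_\Phi$ is then immediate: given $W_1 \in \mathcal{C}$ and $W_1 \equiv_\Phi W_2$ with $W_2 \neq \emptyset$, each $w_2 \in W_2$ has a twin $w_1 \in W_1$, and the lemma gives $W_2, w_2 \models \varphi$, hence $W_2 \in \mathcal{C}$.

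For the backward direction, suppose $\mathcal{C}$ is closed under finite propositional equivalence with finite witness $\Phi$. Since $\Phi$ is finite, the set $2^\Phi$ of functions $t : \Phi \to \{0,1\}$ is finite, and for each such $t$ we have the characteristic conjunction
\[
\alpha_t\ :=\ \bigwedge_{p \in \Phi,\ t(p) = 1} p\ \wedge\ \bigwedge_{p \in \Phi,\ t(p) = 0} \neg p.
\]
For each $T \subseteq 2^\Phi$, set
\[
\beta_T\ :=\ \bigwedge_{t \in T} \Udiam \alpha_t\ \wedge\ \bigwedge_{t \in 2^\Phi \setminus T} \neg \Udiam \alpha_t,
\]
so that for any nonempty SD-model $W$ and any $w \in W$, one has $W, w \models \beta_T$ iff $W|_\Phi = T$ (the condition does not depend on $w$). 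Let $\mathcal{T} := \{ W|_\Phi \mid W \in \mathcal{C},\ W \neq \emptyset \} \subseteq 2^{2^\Phi}$; this is finite, and closure of $\mathcal{C}$ under $\equiv_\Phi$ ensures that for any nonempty $W$, $W \in \mathcal{C} \Leftrightarrow W|_\Phi \in \mathcal{T}$. Now define $\varphi := \bigvee_{T \in \mathcal{T}} \beta_T$, with the empty disjunction interpreted as $\bot$. Then the empty model satisfies $\varphi$ vacuously and is in $\mathcal{C}$, while for nonempty $W$, $W \models \varphi$ iff $W|_\Phi \in \mathcal{T}$ iff $W \in \mathcal{C}$, so $\varphi$ defines $\mathcal{C}$.

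The only real subtlety—hardly an obstacle—is bookkeeping around the empty SD-model: the clause $\emptyset \in \mathcal{C}$ in the definition of closure aligns exactly with the vacuous satisfaction of every $\ulang$-formula in $\emptyset$, so no special case is needed in the construction of $\varphi$ (in particular, the case $\mathcal{T} = \emptyset$, where $\varphi = \bot$, correctly defines $\mathcal{C} = \{\emptyset\}$).
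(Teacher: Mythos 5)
Your proposal is correct and follows essentially the same route as the paper: the forward direction is the same locality argument (you merely spell out the induction the paper leaves implicit), and your formula $\beta_T$, which asserts $\Udiam\alpha_t$ for exactly the realized $\Phi$-types, is logically equivalent to the paper's characteristic formula $\varphi_W$, which uses $\Ubox$ of the disjunction of realized types in place of your conjunction of negated diamonds. The only cosmetic difference is that you index the final disjunction by the finite set of realized type-sets $\mathcal{T}$ rather than by the models in $\mathcal{C}\setminus\{\emptyset\}$ themselves, which makes the finiteness of the disjunction immediate rather than requiring the paper's remark about eliminating repeated disjuncts.
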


\begin{proof}
Suppose a class of SD-models $\mathcal{C}$ is definable in $\ulang$ by some formula $\varphi$ and let $\Phi\subseteq\prop$ be the set of  proposition symbols that occur in $\varphi$. Then  for all SD-models $W_{1}$ and $W_{2}$, if
$W_{1}\in\mathcal{C}$ and $W_{1} \equiv_{\Phi} W_{2}$,
then $W_{1}\models\varphi$ and thus
$W_{2}\models\varphi$, whence $W_{2}\in\mathcal{C}$. 
Thus $\mathcal{C}$ is closed under finite propositional equivalence.

Now, suppose  $\mathcal{C}$ is closed under finite propositional equivalence and let $\Phi\subseteq\prop$ be a finite set such that for all
nonempty SD-models $W_{1}$ and $W_{2}$, if $W_{1}\in\mathcal{C}$ and $W_{1} \equiv_{\Phi} W_{2}$, then $W_{2}\in\mathcal{C}$. Define a \emph{characteristic formula} 
$\varphi_{W}$ of a model $W\not=\emptyset$ as
follows. For each $w\in W$, let $\chi_w$ be the
unique propositional
type in $\mathit{Conj}(\Phi)$ such that $W,w\models\chi_w$. Define
$$\varphi_{W}\ :=\ \bigl(\bigwedge\limits_{w\in W}\udiam \chi_w\bigr)\ 
\wedge\ \ubox\bigl(\bigvee\limits_{w\in W}\chi_w\bigr),$$
which of course can be regarded as a finite formula since we can eliminate repeated
conjuncts and disjuncts.
Let 
%
$$\Delta(\Phi,\mathcal{C}) := \bigvee\limits_{W\ \in\
\mathcal{C}\setminus\{\emptyset\}}\varphi_W,$$
which again can be regarded as a finite formula.
Then $\mathcal{C}$ is defined by $\Delta(\Phi,\mathcal{C})$. Indeed, $\Delta(\Phi,\mathcal{C})$ is true in every model $W \in \mathcal{C}$. Conversely, every SD-model $U\not=\emptyset$ satisfying $\Delta(\Phi,\mathcal{C})$
satisfies some disjunct, i.e., some characteristic formula $\varphi_W$ of some model $W\in\mathcal{C}\setminus\{\emptyset\}$, because the truth value of
each disjunct of $\Delta(\Phi,\mathcal{C})$ is constant across the
worlds of a given SD-model. Clearly
$U \equiv_{\Phi} W$, whence we have $U \in \mathcal{C}$.
\end{proof}

Consequently, 
we obtain a characterisation of the expressive power of $\mathcal{L}_{\dep}$:

\begin{corollary}\label{expressivitycharacterization2}
A class of SD-models is definable in $\dlang$
iff it is closed under finite propositional equivalence.
\end{corollary}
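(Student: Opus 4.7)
The corollary follows from Proposition \ref{expressivitycharacterization} once we verify that $\dlang$ and $\ulang$ define the same classes of SD-models. My plan is therefore to produce mutual translations between the two logics preserving the local relation $W, w \models \varphi$, and then invoke that proposition.

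For the implication that any $\dlang$-definable class is closed under finite propositional equivalence, I would argue directly. Suppose $\varphi \in \dfor$ defines the class $\mathcal{C}$, and let $\Phi$ be the finite set of propositional symbols occurring in $\varphi$. An induction on $\varphi$ shows that the truth value of $\varphi$ at $(W, w)$ is determined by $(W|_\Phi, w|_\Phi)$. The only substantive step is for $\dep(\alpha_1, \ldots, \alpha_k; \beta)$: its truth at $(W, w)$ only inspects the truth values of the subformulae $\alpha_i$ and $\beta$ throughout $W$, and by the induction hypothesis each such truth value depends only on $W|_\Phi$. Closure of $\mathcal{C}$ under $\equiv_\Phi$ follows, and $\emptyset \in \mathcal{C}$ holds vacuously since every $\dlang$-formula is true at every world of $\emptyset$.

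For the converse, given a class $\mathcal{C}$ closed under finite propositional equivalence, Proposition \ref{expressivitycharacterization} provides a $\ulang$-formula $\varphi$ defining $\mathcal{C}$. I would then translate $\varphi$ into an equivalent $\dlang$-formula using the recursive translation $t$ that is the identity on propositional atoms, commutes with $\neg$ and $\rightarrow$, and maps $\Ubox\psi$ to $t(\psi) \wedge \cdep t(\psi)$. Its correctness is the observation, already made in the discussion following Proposition \ref{prop:Dep}, that the $\dlang$-term $\alpha \wedge \cdep\alpha$ has the same local semantic content as $\Ubox\alpha$: namely, it holds at $(W, w)$ iff $\alpha$ is true throughout $W$. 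No genuinely hard step arises here, since the real work has been carried out in Proposition \ref{expressivitycharacterization}; the present corollary is just a transfer along the two transparent translations.
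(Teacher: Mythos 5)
Your proposal is correct and follows essentially the same route as the paper, which obtains the corollary directly from Proposition \ref{expressivitycharacterization} via the equiexpressivity of $\dlang$ and $\ulang$ (the translation of $\Ubox\psi$ as $\psi\wedge\cdep\psi$ in one direction, and the $\Phi$-invariance of $\dlang$-formulae, recorded in the paper as Lemma \ref{blocklemma}, in the other). Your direct induction for the forward direction is a minor, harmless variation on the paper's reliance on the translation from $\dlang$ into $\ulang$ given by Proposition \ref{prop:Dep}.
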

Let $\Phi\not=\emptyset$ be a \emph{finite} subset of $\mathit{PROP}$.
Let $\mathcal{C}$ denote the set of all SD-models in restriction to $\Phi$,
i.e., the set $\{\, W|_{\Phi}\ |\ W\text{ is an SD-model}\, \}$.
We call $\mathcal{C}$ 
the set of \emph{$\Phi$-models} and denote it by $\mathcal{M}(\Phi)$.
Let $\mathcal{S}\subseteq\mathcal{M}(\Phi)$.
We say that $\mathcal{S}$
is definable in $\mathcal{L}_{\mathcal{U}}$ $(\mathcal{L}_{\dep}$)
in restriction to $\mathcal{M}(\Phi)$, if there is a
formula $\varphi$ of $\mathcal{L}_{\mathcal{U}}$ ($\mathcal{L}_{\dep}$)
Such that for all $W\in\mathcal{M}(\Phi)$, we
have $W\models\varphi$ iff $W\in\mathcal{S}$.
\begin{proposition}
\label{expressivecompleteness}
$\mathcal{L}_{\mathcal{U}}$ and $\mathcal{L}_{\dep}$ are
expressively complete in the sense that for any finite nonempty
$\Phi\subseteq\mathit{PROP}$ and any $\mathcal{S}\subseteq\mathcal{M}(\Phi)$
with $\emptyset\in\mathcal{S}$, the set $\mathcal{S}$ is
definable in restriction to $\mathcal{M}(\Phi)$ in
both $\mathcal{L}_{\mathcal{U}}$ and $\mathcal{L}_{\dep}$.
\end{proposition}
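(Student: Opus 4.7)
\medskip

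\noindent\textbf{Proof plan.} The plan is to reduce the statement to Proposition \ref{expressivitycharacterization} and Corollary \ref{expressivitycharacterization2} by lifting $\mathcal{S}$ to a class of full SD-models. Define
\[
\mathcal{C}\ :=\ \{\, W\ \text{SD-model}\ \mid\ W|_{\Phi}\in\mathcal{S}\, \}.
\]
I would first check that $\mathcal{C}$ is closed under finite propositional equivalence with witnessing finite set $\Phi$. Since $\emptyset|_{\Phi}=\emptyset\in\mathcal{S}$, we have $\emptyset\in\mathcal{C}$. Moreover, if $W_{1}\in\mathcal{C}$ and $W_{1}\equiv_{\Phi}W_{2}$, then $W_{2}|_{\Phi}=W_{1}|_{\Phi}\in\mathcal{S}$, so $W_{2}\in\mathcal{C}$. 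Hence by Proposition \ref{expressivitycharacterization} there is a formula $\varphi\in\ulang$ that defines $\mathcal{C}$, and by Corollary \ref{expressivitycharacterization2} there is such a formula in $\dlang$ as well.

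The second step is to observe that the formula produced by the construction in the proof of Proposition \ref{expressivitycharacterization} only involves proposition symbols from $\Phi$, because the characteristic formulas $\varphi_{W}$ are built from $\Phi$-types $\chi_{w}\in\mathit{Conj}(\Phi)$. Consequently, although $\mathcal{C}$ contains infinitely many SD-models, the disjunction
\[
\Delta(\Phi,\mathcal{C})\ =\ \bigvee_{V\in\mathcal{S}\setminus\{\emptyset\}}\varphi_{V}
\]
collapses to a finite disjunction, one disjunct per element of $\mathcal{S}\setminus\{\emptyset\}$, since any two $\equiv_{\Phi}$-equivalent SD-models yield syntactically identical characteristic formulas and $\mathcal{M}(\Phi)$ itself is finite. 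This makes the formula a well-defined $\ulang$-formula (and, by Proposition \ref{prop:Dep} together with the simulation of $\ubox$ via $\varphi\wedge\cdep\varphi$, a $\dlang$-formula as well).

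Finally, I would verify that the same formula $\Delta(\Phi,\mathcal{C})$ defines $\mathcal{S}$ in restriction to $\mathcal{M}(\Phi)$. For any $W\in\mathcal{M}(\Phi)$, since all proposition symbols in $\Delta(\Phi,\mathcal{C})$ lie in $\Phi$, the truth value $W\models\Delta(\Phi,\mathcal{C})$ depends only on $W$ (viewed as its own $\Phi$-restriction). Hence $W\models\Delta(\Phi,\mathcal{C})$ iff $W\in\mathcal{C}$ iff $W=W|_{\Phi}\in\mathcal{S}$. The case $W=\emptyset$ is handled by the assumption $\emptyset\in\mathcal{S}$ together with the fact that every formula of $\ulang$ and $\dlang$ is vacuously valid in the empty model.

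There is no real obstacle here: once the correct class $\mathcal{C}$ is identified, the result is a direct corollary of Proposition \ref{expressivitycharacterization} and Corollary \ref{expressivitycharacterization2}. The only mildly delicate point is the bookkeeping observation that the construction from the proof of Proposition \ref{expressivitycharacterization} yields a finite formula even though $\mathcal{C}$ is typically infinite, which follows immediately from the fact that the characteristic formulas depend only on $\Phi$-restrictions.
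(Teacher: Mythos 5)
Your proposal is correct and follows essentially the same route as the paper: the paper establishes the $\mathcal{L}_{\mathcal{U}}$ case by rerunning the characteristic-formula construction from the proof of Proposition \ref{expressivitycharacterization} and then passes to $\mathcal{L}_{\dep}$ via equiexpressivity, which is exactly what your reduction through the lifted class $\mathcal{C}$ amounts to. Your explicit bookkeeping (that the disjunction is finite because $\mathcal{M}(\Phi)$ is finite, and that the defining formula uses only symbols from $\Phi$ so its truth value on a $\Phi$-model is well-defined) is a welcome addition to the paper's one-line proof.
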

\begin{proof}
The claim for $\mathcal{L}_{\mathcal{U}}$ is
established by an argument that is almost identical to
the proof of Proposition \ref{expressivitycharacterization}.
The claim for $\mathcal{L}_{\dep}$ then follows by
the equiexpressivity of 
the logics $\mathcal{L}_{\dep}$ and $\mathcal{L}_{\mathcal{U}}$.
\end{proof}

\section{Propositional Logic of Independence \indlang}\label{indsection}

We have defined the logic of $\dlang$ as 
an extension of propositional logic PL with the operator $\dep$. 
Next we introduce 
\emph{Propositional Logic of Independence} $\mathcal{L}_{\ind}$ which extends 
PL in a similar way, but now with the operator $\ind$ instead of $\dep$.
The logic $\mathcal{L}_{\ind}$ relates to Propositional Independence Logic $\I$
analogously to the way $\mathcal{L}_{\dep}$ relates to
Propositional Dependence Logic $\D$.
%


%
The language of \emph{Propositional Logic of Independence}
$\indlang$ is given by the following grammar.
$$\varphi ::=  p\ |\ \neg\varphi\ |\
(\varphi\rightarrow\varphi)\ |\
(\varphi,\ldots,\varphi)\, \ind_{(\varphi,\ldots,\varphi)}
(\varphi,\ldots,\varphi),$$
where $p\in\prop$, and each of the three tuples $(\varphi,\ldots,\varphi)$ in
the expression
$$(\varphi,\ldots,\varphi)\, \ind_{(\varphi,\ldots,\varphi)}(\varphi,\ldots,\varphi)$$
is a finite tuple of formulae; the tuples in the same expression may be of different lengths,
but only the tuple in the subscript may possibly be empty.
Instead of writing
$(\varphi,\ldots,\varphi)\, \ind_{\emptyset}\, (\varphi,\ldots,\varphi),$
we simply write $(\varphi,\ldots,\varphi)\, \ind_{}\, (\varphi,\ldots,\varphi)$.
As in \dlang, we consider the Boolean
connectives $\land, \lor, \leftrightarrow$ definable as usual. 
Sometimes we  leave out brackets of formulae of $\mathcal{L}_{\ind}$, following the convention that the operator $\ind$ has a higher priority than all binary connectives, while negation has a higher priority than all other operators (including $\ind$).
We let $\mathit{FOR}(\mathcal{L}_{\ind})$ denote the  set of
formulae of $\mathcal{L}_{\ind}$.
%

%

%
The semantics of $\indlang$ is similar to that of $\mathcal{L}_{\dep}$ and also defined with
respect to SD-models $W$ and assignments $w\in W$.
Propositional symbols and Boolean operators are interpreted exactly as in $\dlang$.
To define the semantics for $\ind$, recall
that in an SD-model $W$ and for a logic with a  Kripke-style semantics,
each $w\in W$ can be extended to a truth function 
$w^{*}$ from the set of formulae to $\{0,1\}$ such that $w^{*}(\varphi)=1$ iff $W,w\models\varphi$.
The truth definition of the operator $\ind$ extends the one in the logic \I as follows. 
We define $W,w\models_{\indlang} 
(\fo_1,\ldots,\fo_k)\, \ind_{(\fod_1,\ldots,\fod_m)}(\fob_1,\ldots,\fob_n)$ 
iff for all $w_{1},w_{2}\in W$ that agree on $\fod_1,\ldots,\fod_m$
(i.e., are such that $w^{*}_{1}(\fod_i)=w^{*}_{2}(\fod_i)$ for all $i\in\{1,\ldots, m\}$),
there exists some $v\in W$ such that 
\[
\big(\bigwedge_{i\leq m}
v^{*}(\fod_i)= w_1^*(\theta_i)\wedge
\bigwedge_{i\leq k} v^{*}(\fo_i) = w^{*}_{1}(\fo_i) \wedge 
\bigwedge_{i\leq n} v^{*}(\fob_i)=w^{*}_{2}(\fob_i) \big). 
\]

Thus the operator $\ind$ of $\mathcal{L}_{\ind}$ extends $\ind$ of $\I$
so that in $\mathcal{L}_{\ind}$ the operator can be 
applied to all formulae, not 
only proposition symbols.
Note that the semantics of $\ind$
(which we defined with respect to the model $W$ and 
world $w\in W$) does
not directly depend on the world $w$ but is global in
the model.
Analogously to the conventions fixed in Definition \ref{validityetc.}
for $\mathcal{L}_{\dep}$, we say that a
formula $\fo$ of $\indlang$ is \emph{valid in a model} W (denoted 
$W \models_{\indlang}\fo$), if $\fo$ is true in every world of $W$, and that 
$\varphi$ is \emph{valid} (denoted $\models_{\mathcal{L}_{\ind}}\varphi$)
if $W\models_{\mathcal{L}_{\ind}}\varphi$ for every model $W$.
Two formulae $\fo$ and $\fob$ of $\indlang$ are \emph{equivalent}, denoted 
$\fo \equiv_{\indlang} \fob$, if the equivalence $W,w\models_{\mathcal{L}_{\ind}}\varphi
\Leftrightarrow W,w\models_{\mathcal{L}_{\ind}}\psi$ holds for all models $W$
and all $w\in W$.
The following two lemmas are straightforward to prove.
For Lemma \ref{blocklemma}, recall the definition of $\Phi$-equivalence
of SD-models from Definition \ref{equivalentmodelsdefinition}.
\begin{lemma}
\label{lem:Ind-ER}
The logic \indlang satisfies the equivalent replacements property ER
with respect to $\equiv_{\mathcal{L}_{\ind}}$.
\end{lemma}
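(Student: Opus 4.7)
The plan is to establish ER by induction on the structure of the formula $\theta(q_1,\ldots,q_k)$. The key observation that makes the argument go through is that equivalent formulae induce identical truth-value functions on any given SD-model: if $\alpha \equiv_{\mathcal{L}_{\ind}} \beta$, then for every SD-model $W$ and every $v \in W$ we have $v^{*}(\alpha) = v^{*}(\beta)$. Since the semantics of $\ind$ is entirely phrased in terms of the induced functions $v \mapsto v^{*}(\cdot)$ applied to the argument formulae, substituting equivalent formulae into any position of an $\ind$-atom cannot change its truth value.

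First I would set up the base case. If $\theta$ is a proposition symbol $p$, either $p = q_j$ for some $j$ (so $\theta(\bar\varphi) = \varphi_j$ and $\theta(\bar\psi) = \psi_j$, and equivalence holds by assumption) or $p \notin \{q_1,\ldots,q_k\}$ (so both substitutions leave $\theta$ unchanged). The Boolean inductive cases for $\neg$ and $\rightarrow$ are standard: the IH gives equivalence of the immediate subformulae under substitution, and the truth clauses for $\neg$ and $\rightarrow$ depend only on the truth values of those subformulae at $(W,w)$.

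The main case is the induction step for the independence operator. Suppose
$$\theta \;=\; (\alpha_1,\ldots,\alpha_a)\,\ind_{(\beta_1,\ldots,\beta_b)}(\gamma_1,\ldots,\gamma_c).$$
By the IH applied to each $\alpha_i$, $\beta_i$, $\gamma_i$, we have
$\alpha_i(\bar\varphi) \equiv_{\mathcal{L}_{\ind}} \alpha_i(\bar\psi)$, and analogously for the $\beta_i$'s and $\gamma_i$'s. Consequently, for every SD-model $W$ and every $v \in W$, the truth values $v^{*}(\alpha_i(\bar\varphi))$ and $v^{*}(\alpha_i(\bar\psi))$ coincide, and similarly for the other families. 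The truth definition of $\ind$ is a quantification over $w_1, w_2 \in W$ agreeing on the $\beta_i$'s, followed by the existence of some $v \in W$ matching $w_1$ on the $\alpha_i$'s, $w_2$ on the $\gamma_i$'s, and both on the $\beta_i$'s. Because all these agreement conditions are expressed solely through the functions $v^{*}(\cdot)$, which agree on equivalent formulae, the two instantiations $\theta(\bar\varphi)$ and $\theta(\bar\psi)$ yield the same truth value at $(W,w)$.

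I do not expect any serious obstacle; the lemma is essentially a bookkeeping exercise. The only delicate point is to make sure the substitution notation is handled consistently so that, in the $\ind$ case, one may appeal to the IH \emph{simultaneously} for all argument formulae of the atom and then observe that the truth condition is insensitive to replacing any such formula by a semantically equivalent one.
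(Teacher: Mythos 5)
Your proof is correct. The paper gives no proof at all, stating only that the lemma is ``straightforward to prove,'' and your structural induction---resting on the observation that the truth condition of $\ind$ is phrased entirely in terms of the truth-value functions $v^{*}$ applied to its argument formulae, which coincide for $\equiv_{\mathcal{L}_{\ind}}$-equivalent formulae at every world of every SD-model---is precisely the standard argument the authors leave implicit.
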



\begin{lemma}\label{blocklemma}
Let $\varphi$ be a formula of $\mathcal{L}_{\dep}$ or $\mathcal{L}_{\ind}$.
Let $\Phi$ be the set of proposition symbols occurring in $\varphi$.
For nonempty models $W$ and $U$ and points $w\in W$
and $u\in U$, if $W\equiv_{\Phi} U$, then we have $W,w\models\varphi$ iff $U,u\models\varphi$.
\end{lemma}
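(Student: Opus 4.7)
The plan is to proceed by structural induction on $\varphi$, relying on the fact that every subformula of $\varphi$ uses only proposition symbols from $\Phi$, so the inductive hypothesis is always available for subformulas (with the same $\Phi$, or a subset). To make the induction go through cleanly, I would first sharpen the statement to: whenever $W\equiv_{\Phi}U$ and, in addition, $w|_{\Phi}=u|_{\Phi}$, then $W,w\models\varphi$ iff $U,u\models\varphi$. This strengthening is essentially forced by the base case (the propositional atom), and the original lemma follows because for the clauses that actually depend on the point of evaluation we can always freely choose $u\in U$ with $u|_\Phi=w|_\Phi$ via $W|_\Phi=U|_\Phi$.

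The base case $\varphi=p$ with $p\in\Phi$ is immediate: $W,w\models p$ iff $w(p)=1$ iff $u(p)=1$ (by $w|_{\Phi}=u|_{\Phi}$) iff $U,u\models p$. The Boolean cases $\neg\psi$ and $\psi_{1}\to\psi_{2}$ unfold the semantic definition and invoke the inductive hypothesis directly on the immediate subformulae.

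The interesting case is the determinacy clause $\varphi=\dep(\varphi_{1},\ldots,\varphi_{k};\psi)$. Here the truth condition does not depend on the point, so I only need to show that the global ``for all $v_{1},v_{2}$'' condition holds in $W$ iff it holds in $U$. For the forward direction, given $v_{1}',v_{2}'\in U$ with $U,v_{1}'\models\varphi_{i}\Leftrightarrow U,v_{2}'\models\varphi_{i}$ for all $i$, I use $W|_{\Phi}=U|_{\Phi}$ to pick $v_{1},v_{2}\in W$ with $v_{j}|_{\Phi}=v_{j}'|_{\Phi}$. The sharpened inductive hypothesis applied to each $\varphi_{i}$ (with pair $(v_{j},v_{j}')$) gives $W,v_{j}\models\varphi_{i}\Leftrightarrow U,v_{j}'\models\varphi_{i}$, so the agreement on the $\varphi_{i}$'s transfers to $W$; the hypothesis on $\varphi$ then yields agreement on $\psi$ in $W$, and one more application of the inductive hypothesis to $\psi$ carries this back to $U$. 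The reverse direction is symmetric.

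The $\ind$ case is analogous, with one extra step: the semantics of $(\fo_{1},\dots)\ind_{(\fod_{1},\dots)}(\fob_{1},\dots)$ requires producing an existential witness $v$ (respectively $v'$) in the appropriate model; I transport this witness across models by taking a preimage under the bijection between $\Phi$-restrictions induced by $W\equiv_{\Phi}U$, and the inductive hypothesis applied to each of the $\fo_{i}$, $\fob_{i}$, $\fod_{i}$ ensures the required value-agreement conditions are preserved. The main ``obstacle'' is not conceptual depth but careful bookkeeping: matching points of $W$ with points of $U$ through their $\Phi$-restrictions and being precise about which subformula the inductive hypothesis is applied to at each step.
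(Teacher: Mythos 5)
The paper offers no proof of this lemma (it is simply declared ``straightforward to prove''), so there is no official argument to compare against; your structural induction with the strengthened hypothesis ($W\equiv_{\Phi}U$ \emph{and} $w|_{\Phi}=u|_{\Phi}$) is the natural argument, and the inductive steps are correct. In particular, you rightly observe that the clauses for $\dep$ and $\ind$ are global in $W$, so the world quantifiers (universal for $\dep$, the existential witness for $\ind$) can be transported between $W$ and $U$ by matching worlds with equal $\Phi$-restrictions, the inductive hypothesis guaranteeing that agreement on the argument formulae is preserved. One small caution: $W\equiv_{\Phi}U$ gives only the set equality $W|_{\Phi}=U|_{\Phi}$, not a bijection between $W$ and $U$ (several worlds may share a restriction, and the models may have different cardinalities); but all your argument needs is that every $v'\in U$ has \emph{some} $v\in W$ with $v|_{\Phi}=v'|_{\Phi}$, which the set equality provides, so this is harmless once rephrased.

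The one genuine problem is your closing claim that ``the original lemma follows.'' It does not, because the lemma as literally stated --- quantifying universally over $w\in W$ and $u\in U$ with no compatibility condition between them --- is false: take $\varphi=p$ and $W=U=\{w_1,w_2\}$ with $w_1(p)=1$ and $w_2(p)=0$; then $W\equiv_{\{p\}}U$, yet $W,w_1\models p$ while $U,w_2\not\models p$. Your own base case already shows that the side condition $w|_{\Phi}=u|_{\Phi}$ is indispensable for point-sensitive formulae, and no amount of ``freely choosing $u$'' recovers a statement that is universally quantified over $u$. The correct conclusion is that the lemma must be read (or restated) with that side condition, or else restricted to formulae whose truth value does not depend on the evaluation point, such as $\dep$- and $\ind$-formulae; what you have actually proved is this corrected version, and you should say so explicitly rather than assert that the unrestricted statement is a consequence of it.
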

%
%
%

Since both logics \dlang and \indlang are interpreted
with respect to SD-models $W$ and assignments $w\in W$, it is
easy to compare them. We define the following simple translation $t$
from $\mathcal{L}_{\dep}$ into $\mathcal{L}_{\ind}$:
\begin{enumerate}
\item
$t(p)\ :=\ p$ for $p\in\mathit{PROP}$
\item
$t(\neg\varphi)\ :=\ \neg t(\varphi)$
\item
$t(\varphi\rightarrow\psi)\ :=\ t(\varphi)\rightarrow t(\psi)$
\item
$t(\dep(\varphi_1,\ldots,\varphi_k\,;\, \psi))\, :=\,
t(\psi)\, \ind_{(t(\varphi_1),\ldots,t(\varphi_k))}\, t(\psi)$.
\end{enumerate}
In particular, we have $t(\cdep\psi) := t(\psi)\, \ind\, t(\psi)$.
(Recall our discussion in
Section \ref{indeplogic} concerning formulae of the type $p\, \ind\, p$.)
Now we make some simple but interesting observations.
\begin{proposition}\label{expr}
$\mathcal{L}_{\dep}$ embeds into $\mathcal{L}_{\ind}$
and $\D$ embeds  into $\I$, in the following sense. 
\begin{enumerate}
\item
For each formula $\varphi$ of $\mathcal{L}_{\dep}$,
there exists a formula $\psi$ of $\mathcal{L}_{\ind}$ equivalent to $\varphi$, i.e., such 
 that $W,w\models\varphi$ iff $W,w\models\psi$ 
 for all $W$ and all $w\in W$. 
 
\item
For each formula $\varphi$ of $\D$,
there exists a formula $\psi$ of\, $\I$
equivalent to $\varphi$, i.e., such that $W\Vdash\varphi$ iff $W\Vdash\psi$ for all $W$. 
\end{enumerate}
\end{proposition}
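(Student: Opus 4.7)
The plan is to prove both embeddings by straightforward structural induction, using the translation $t$ defined just before the proposition for part (1), and constructing an analogous translation $s$ for part (2).

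For part (1), I would show by induction on the structure of $\varphi \in \mathit{FOR}(\dlang)$ that $W,w \models_{\dep} \varphi$ iff $W,w \models_{\ind} t(\varphi)$, for every SD-model $W$ and every $w \in W$. The base case and the Boolean cases are immediate from the fact that $t$ commutes with $\neg$, $\rightarrow$, and preserves proposition symbols, together with the fact that the Boolean clauses are identical in $\dlang$ and $\indlang$. The only case requiring work is $\varphi = \dep(\varphi_1,\ldots,\varphi_k;\psi)$, where $t(\varphi) = t(\psi)\,\ind_{(t(\varphi_1),\ldots,t(\varphi_k))}\,t(\psi)$. Here the key observation is this: by the semantics of $\ind$, the translated formula holds iff for every pair $w_1,w_2 \in W$ that agree on each $t(\varphi_i)$ there exists some $v \in W$ witnessing the independence condition. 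Because $t(\psi)$ appears in both the left and right slots of $\ind$, any such witness $v$ must simultaneously satisfy $v^*(t(\psi)) = w_1^*(t(\psi))$ and $v^*(t(\psi)) = w_2^*(t(\psi))$. Thus the existential requirement collapses: if $w_1$ and $w_2$ agree on $t(\psi)$, one can take $v := w_1$; otherwise no witness exists. Consequently the $\ind$-formula is equivalent to the purely universal condition that agreement on $t(\varphi_1),\ldots,t(\varphi_k)$ forces agreement on $t(\psi)$. Applying the inductive hypothesis to each $\varphi_i$ and to $\psi$ converts this into exactly the defining condition of $W,w\models_{\dep}\dep(\varphi_1,\ldots,\varphi_k;\psi)$.

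For part (2), I would define a translation $s:\mathit{FOR}(\D) \to \mathit{FOR}(\I)$ by $s(p) := p$, $s(\neg p) := \neg p$, $s(\varphi \vee \psi) := s(\varphi) \vee s(\psi)$, $s(\varphi \wedge \psi) := s(\varphi) \wedge s(\psi)$, $s(\dep(p_1,\ldots,p_k;q)) := q\,\ind_{(p_1,\ldots,p_k)}\,q$, and $s(\neg\dep(p_1,\ldots,p_k;q)) := p_0 \wedge \neg p_0$ for some fixed $p_0 \in \prop$. The induction on $\varphi$ then proceeds again case by case. The Boolean cases carry over trivially since $\D$ and $\I$ share the same team-semantic clauses for $\vee$ and $\wedge$. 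For the atom $\dep(p_1,\ldots,p_k;q)$, the same collapsing-of-the-witness argument from part (1) applies: $W \Vdash_{\I} q\,\ind_{(p_1,\ldots,p_k)}\,q$ iff any two assignments in $W$ that agree on $p_1,\ldots,p_k$ also agree on $q$, which is exactly the team-semantics clause for $\dep(p_1,\ldots,p_k;q)$. For the negated atom, recall that $W \Vdash_{\D} \neg\dep(p_1,\ldots,p_k;q)$ iff $W = \emptyset$, and it is immediate from the team-semantic clauses of $\I$ that $W \Vdash_{\I} p_0 \wedge \neg p_0$ iff $W = \emptyset$ as well.

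The main obstacle, such as it is, lies in part (1) in cleanly unpacking the existential in the semantics of $\ind$ and checking that putting the same formula $t(\psi)$ in two slots of the triple forces the existential witness to exist precisely when the required dependence holds; once that is noticed, the rest is mechanical. Part (2) involves no real difficulty beyond finding the $\I$-formula that captures emptiness of the team, which $p_0 \wedge \neg p_0$ does by its downward-closed team semantics.
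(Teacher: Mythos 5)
Your proof is correct and takes essentially the same route as the paper, which uses exactly the translation $t$ for part (1) and the variant $t'$ sending $\dep(p_1,\ldots,p_k;q)$ to $q\,\ind_{(p_1,\ldots,p_k)}\,q$ for part (2), declaring both inductions straightforward; your ``collapsing witness'' analysis of $\ind$ with the same formula in both outer slots is precisely the verification the paper leaves implicit. Your explicit clause sending $\neg\dep(p_1,\ldots,p_k;q)$ to $p_0\wedge\neg p_0$ correctly fills in a case the paper's one-line proof glosses over.
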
 

\begin{proof}
The proof of the first claim is straightforward, 
using the translation $t$ defined above. 
The second claim is also straightforward, based on the obvious variant $t'$ of
the translation $t$ that keeps proposition symbols and
Boolean connectives the same and translates
$\dep(p_1,\ldots,p_k\, ;\, q)$ to $q\, \ind_{(p_1,\ldots,p_k)}\, q$.
\end{proof}

We note that the translation from $\D$ into
$\I$ mentioned in the above proof is well known from the literature on team semantics.
%
%

%
We define $\uboxp\varphi$ to be an abbreviation
for the formula $\varphi\wedge\,  \varphi \ind \varphi$ of $\mathcal{L}_{\ind}$,
and we let $\udiamp\varphi$ denote $\neg\uboxp\neg\varphi$.
Note that $\uboxp$ corresponds to the universal
modality in an obvious way. To see how the independence 
operator $\ind$ can be expressed in terms of $\udiamp$,
consider a formula
$(\fo_1,...,\fo_k)\, \ind_{(\fod_1,...,\fod_m)}(\fob_1,...,\fob_n)$ 
of $\mathcal{L}_{\ind}$. 
Recall the notation $\conj(\Phi)$ from Section \ref{prelipreliwhatever},
including also the special case for $\Phi =\emptyset$ which
stipulates that $\conj(\emptyset) = \{\top\}$. Define
$$B\, := \conj(\{\varphi_1,\ldots,\varphi_k\}) \times \ \conj(\{\theta_1,
\ldots,\theta_m\}) \times  \conj(\{\psi_1,\ldots,\psi_n\}).$$
%

%
%

%
\begin{proposition}
\label{prop:Ind}
\begin{multline*}
(\fo_1,...,\fo_k)\, \ind_{(\fod_1,...,\fod_m)}(\fob_1,...,\fob_n) \ 
\\ \\ 
\equiv_{\mathcal{L}_{\ind}}\ \bigwedge\limits_{
(\varphi,\theta,\psi)\, \in\, B}
\Big(\bigl(\udiamp(\fod \wedge \fo)
\wedge\udiamp(\fod \wedge \fob)\bigr)
\rightarrow\ \udiamp(\fod \wedge \fo \wedge \fob) \Big).
\end{multline*}
\end{proposition}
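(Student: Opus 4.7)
The plan is to prove both directions of the equivalence by exploiting the partition induced by the complete types in $B$. The key observation is that for any SD-model $W$ and any $v \in W$, the formula $\uboxp\alpha$ asserts that $\alpha$ holds at every world of $W$, so $\udiamp\alpha$ asserts that $\alpha$ holds at some world of $W$. Moreover, for any finite set $\Phi$ of formulae, every world $w \in W$ satisfies exactly one type in $\conj(\Phi)$, namely the conjunction whose literals are determined by the truth values $W,w \models \chi$ for $\chi \in \Phi$.

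For the forward direction, assume that $(\fo_1,\ldots,\fo_k)\, \ind_{(\fod_1,\ldots,\fod_m)}(\fob_1,\ldots,\fob_n)$ holds in $W$ at $w$. Fix a triple $(\varphi,\theta,\psi) \in B$ and assume that both $\udiamp(\theta \wedge \varphi)$ and $\udiamp(\theta \wedge \psi)$ hold; this gives witnesses $w_1, w_2 \in W$ of $\theta \wedge \varphi$ and $\theta \wedge \psi$ respectively. Since $w_1$ and $w_2$ both realise the type $\theta$, they agree on $\fod_1,\ldots,\fod_m$. The independence assumption then produces $v \in W$ agreeing with $w_1$ on the $\fod_i$'s and $\fo_i$'s, and with $w_2$ on the $\fob_i$'s, so $v$ satisfies $\theta \wedge \varphi \wedge \psi$, witnessing $\udiamp(\theta \wedge \varphi \wedge \psi)$.

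For the backward direction, assume the right-hand conjunction holds at $W,w$. Take arbitrary $w_1, w_2 \in W$ that agree on $\fod_1,\ldots,\fod_m$. Let $\theta \in \conj(\{\fod_1,\ldots,\fod_m\})$ be the (unique, common) type they satisfy, let $\varphi \in \conj(\{\fo_1,\ldots,\fo_k\})$ be the type realised by $w_1$, and let $\psi \in \conj(\{\fob_1,\ldots,\fob_n\})$ be the type realised by $w_2$. Then $w_1$ witnesses $\udiamp(\theta \wedge \varphi)$ and $w_2$ witnesses $\udiamp(\theta \wedge \psi)$, so the corresponding conjunct of the right-hand side supplies a world $v \in W$ satisfying $\theta \wedge \varphi \wedge \psi$. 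This $v$ agrees with $w_1$ (and $w_2$) on the $\fod_i$'s, with $w_1$ on the $\fo_i$'s, and with $w_2$ on the $\fob_i$'s, which is exactly the witness demanded by the independence clause.

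I do not expect any serious obstacle beyond notational bookkeeping. The only minor subtlety is the case $m = 0$, where $\conj(\emptyset) = \{\top\}$ forces $\theta = \top$ throughout, reducing the statement to the intuitive assertion that $\udiamp\varphi \wedge \udiamp\psi$ implies $\udiamp(\varphi \wedge \psi)$ for each pair of types; this case is handled uniformly by the same argument. One should also note explicitly that the right-hand side has no free occurrence of the world $w$ (both $\udiamp$ and the global independence operator depend only on $W$), matching the fact that the left-hand side is similarly global in $W$.
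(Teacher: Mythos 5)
Your proof is correct and follows exactly the route the paper has in mind: the paper's own proof of Proposition \ref{prop:Ind} consists of the single remark that the right-hand formula ``describes the semantics of the operator $\ind$ in terms of $\udiamp$ in a rather direct way,'' and your two-direction verification via witnesses realising complete types is precisely the detail being elided there. The points you flag (the $m=0$ case with $\conj(\emptyset)=\{\top\}$, and the globality of both sides in $W$) are handled consistently with the paper's conventions, so nothing is missing.
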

\begin{proof}
It is easy to see that  the bottom formula 
%
%
describes the semantics of the operator $\ind$ in
terms of $\udiamp$ in a rather direct way.
\end{proof}

We will complete the expressivity analysis of the logics $\mathcal{L}_{\dep}$,
$\mathcal{L}_{\ind}$, $\D$, and $\I$ in 
Section \ref{express}.

{}
%
%

%

%
%
\section{Natural language and
logics of determinacy and independence}\label{naturallanguageinterpretations}
In this section we interpret the logics $\D$ and $\dlang$ in relation to natural language and
compare their respective properties. We will not discuss $\I$
explicitly here, but since $\I$ is technically quite similar to $\D$,  
many of the  observations below concerning $\D$ apply to $\I$ as well.
Here we take $W\Vdash\varphi$ (respectively, $W\models\psi$) to
mean that a sensible agent who considers $W$ to be the set of all possible scenarios, 
considers $\varphi$ (resp., $\psi$) to hold. This kind of
reading of $\mathrm{SD}$-models as information states is in line
with the the intuitions of modal logic and also inquisitive logic.
The principal argument of the section is that $\mathcal{L}_{\dep}$ is---at
least in some important respects---a better match than
propositional dependence logic $\mathcal{D}$
with natural language intuitions concerning statements about logical determinacy.
It is sufficient for our purposes to consider formulae with
only the connectives $\neg$ and $\vee$
together with determinacy assertions of the type $\dep(p;q)$.
We make the assumption that
the desirable natural language counterparts of $\neg$ and $\vee$ 
should always be 
``\emph{it is not the case that}"
and ``\emph{or}," respectively.
Formulae $\dep(p;q)$
should correspond to assertions stating that 
``\emph{whether $P$ holds, determines whether $Q$ holds}." 
Here $P$ and $Q$ denote suitable natural language
interpretations of $p$ and $q$.
A different kind of analysis would arise if, for example, $\neg$
was to be read as ``\emph{it is never the case that}"  or
``\emph{it is impossible that}."  
Our argument will proceed as follows. We first argue
that the semantics of formulae of the type $p\vee q$ is a good match
with natural language intuitions in both logics $\mathcal{L}_{\dep}$
and $\mathcal{D}$. We then turn to examples concerning
formulae of the type $\dep(p;q)$, and again argue that
the semantics of both logics is reasonable. (In this 
context we also briefly discuss more complex formulae of the type $\dep(p,q;r)$,
but this is not crucial from the point of view of our discussion.)
We then argue that, despite both $p\vee q$ and $\dep(p;q)$
having a reasonable semantics in $\mathcal{D}$,
formulae of the type $\dep(p;q)\vee\dep(p';q')$, which combine $\vee$ and $\dep$,
are problematic. In fact, we show this even for the 
formula $\dep(p;q)\vee\dep(p;q)$, where both disjuncts are the same.
We then continue by arguing that $\mathcal{L}_{\dep}$, in turn,
gives natural interpretations for these problematic examples.
Finally, we briefly discuss formulae of the type $\neg\dep(p;q)$.
As we saw in the previous section (cf. Proposition \ref{classicalproposition}), team
semantics is simply classical semantics lifted to the level of sets: if $\varphi$ is a formula of
propositional logic PL, then, in the setting of team semantics, $W\Vdash\varphi$ simply
means that each world in $W$ satisifies $\varphi$.
Even the semantics of disjunction $\vee$, which may appear strange at first, makes 
perfect sense from that perspective. Let us consider an example
where team semantics seems to give a correct interpretation (from the 
natural language perspective) to the disjunctive formula $p\vee q$.
Consider the following propositions.
\begin{itemize}
\item
``\emph{The patient has an ear infection}'', encoded by $p$.
\item
``\emph{The patient has high blood pressure}'', encoded by $q$.
\end{itemize}
Assume a set $W = \{w_1,w_2,w_3\}$ of possible scenarios has been
identified by a clinician after inspecting a patient with vertigo,
where 

\medskip

$w_1(p) = 1, w_1(q) = 0$; \  
$w_2(p) = 0, w_1(q) = 1$; \ 
$w_3(p) = 1, w_3(q) = 1$. 

\medskip

%
%

%
The set $W = \{w_1,w_2,w_3\}$ is assumed here to be the set of
\emph{exactly all} scenarios which the clinician considers possible.
The clinician has informed the patient about his 
situation, so the patient also considers $W$ to
be the set of all possible scenarios.
To summarize the discussion with
the patient, and to  repeat what 
the situation is, the clinician then states to the patient:
%
%
%
%
$$``\emph{So, you have an ear infection or high blood pressure}."$$
%
%
The clinician seems to be asserting that
%
%
%
%
``$W\Vdash p\vee q,$" i.e.,
that the set $W$ of all possible scenarios
\emph{splits} into worlds that satisfy $p$ and worlds that satisfy $q$.
%
%
Thus team semantics works correctly here.
The interpretation
%
%
``$W\Vdash p\text{ or }W\Vdash q$"
%
%
has a different meaning, which is false in this case.
%
%
%
%

%
In this example, the assertion $p\vee q$ was
made about the \emph{set} $W$ of
possible states of affairs, i.e., sets of assignments. 
Since team semantics is based on
sets of assignments (rather than individual assignments), it is a
natural  framework for interpreting
determinacy atoms $\dep(p_1,\ldots,p_k;q)$. 
We next give natural language examples that 
should convince the reader that the semantics of the operator $\dep$
given by $\mathcal{D}$ is a good match with intuitions 
concerning statements about propositional determinacy.

Consider now a scenario with
two containers of water in two laboratory ovens.
Fix the following propositions.
\begin{itemize}
\item
``\emph{The temperature is over $100^{\circ}$ Celsius}'', denoted by $p$.
\item
``\emph{The water is boiling}'', denoted  by $q$.
%
%
\end{itemize}
Assume the setting is encoded by the set $W = \{w_1,w_2\}$,
with one possible world for each container, such that
$w_1 = \{p,q\}$ and $w_2 = \emptyset$. By $w_1 = \{p,q\}$
we of course mean that $w_1(p) = 1$ and $w_1(q) = 1$,
and analogously,  $w_2 = \emptyset$ means that $w_2(p) = 0$
and $w_2(q) = 0$.
Consider the following assertion:
\begin{center}``\emph{Whether the temperature is over $100^{\circ}$ Celsius}\\
\emph{determines whether the water is boiling}."
\end{center}
It is natural to interpret the assertion to mean that $W\Vdash\dep(p;q)$. 
Thus the semantics of $\dep$ seems to work fine here. For another example, extend the above setting with a third oven $w_3$ with a water container and a new
proposition $r$ which asserts that the air pressure in the oven is over $1$ bar.
Let $W = \{w_1,w_2,w_3\}$, where
\begin{itemize}
\item
$w_1 =   \{p, q \}$  
\item
$w_2 =   \emptyset$  
\item
$w_3 =   \{p, r \}$  
\end{itemize}
This time the temperature being over $100^{\circ}$ Celsius does \emph{not}
determine whether the water is boiling, i.e., $W\not\Vdash\dep(p;q)$,
because both $w_1$ and $w_3$ satisfy $p$, but the two worlds
disagree on the truth value of $q$. However, 
we do have that $W\Vdash\dep(p,r;q)$.
By adding yet another world $w_4 =  \{p, q, r \}$
%
%
encoding a fourth oven, we end up with a
laboratory where $W\not\Vdash\dep(p,r;q)$.
A scenario where $\dep(p,r;q)$ seems to hold universally, i.e.,
in every correctly designed set $W$ of possible worlds,
can be obtained for example by considering a setting
where each world is associated with a balance scale
and two equally heavy weights. Let $p$ encode the
assertion that exactly one weight has been placed on the left
tray of the balance scale, and $r$ the corresponding
assertion concerning the right tray; $q$ is the assertion
that the scale is in balance. Now, indeed, $\dep(p,r;q)$ holds
for any collection of physically possible worlds, with $q$ being
true exactly when either $p$ and $r$ are both true or
when they are both false.
We have seen that team semantics works 
fine on simple disjunctive formulae $p\vee q$ and
and determinacy statements $\dep(p;q)$.
We next combine disjunctions and
determinacy statements and show that
this leads to problematic interpretations from the
point of view of natural language (cf. \cite{doubleteam1}).
Let $p$ denote the assertion that the sun is shining
and $q$ the assertion that it is winter.
Consider a setting where $W = \{w_1,w_2, w_3,w_4\}$, 
with all possible distributions of truth values for $p$ and $q$
realized. Now clearly $W\not\Vdash\dep(p;q)$,
so whether the sun is shining does not determine whether it is winter.
However, now $W\Vdash\dep(p;q)\vee\dep(p;q)$ holds in $\mathcal{D}$.
This seems strange. Consider
the following translation of the formula $\dep(p;q)\vee\dep(p;q)$
into natural language.
\begin{multline*}
\emph{`` Whether the sun is shining determines
whether it  is winter}, \emph{or},\\
\emph{whether the sun is shining determines whether it  is winter}."
\end{multline*}
The intuitively correct
interpretation of the above
statement seems to be the (indeed false) assertion that
\begin{center}
``$W\Vdash \dep(p;q)$ or $W\Vdash \dep(p;q)$,"
\end{center}
rather than
the (true) assertion ``$W\Vdash \dep(p;q)\vee\dep(p;q)$"
suggested by team semantics.
The natural language statement \emph{``Whether the sun is shining determines
whether it  is winter}, \emph{or},
\emph{whether the sun is shining determines whether it  is winter}"
seems obviously false. Therefore team semantics here gives an undesired interpretation to
the formula $\dep(p;q)\vee\dep(p;q)$.
In fact, we observe that
the formula $\varphi := \dep(p;q)\vee\dep(p;q)$ is a \emph{validity} of $\D$, i.e.,
we have $W\models\varphi$ for \emph{every} SD-model $W$.
%

%
%
%
%

%
%

%
%

%
For another example, consider 
the formula $\dep(p;q)
\vee\dep(r;q)$,
where $p$, $q$ and $r$ stand for ``It is dark", 
``John is at home", and "It is cold" respectively.
Assume all distributions of truth values of the propositions $p,q,r$ are 
realized in $W$. Now $W\Vdash\dep(p;q)\vee\dep(r;q)$ holds in $\mathcal{D}$.
This is again counterintuitive from the natural language point  of view.
Like the formula $\dep(p;q)\vee\dep(p;q)$, also the
formula $\dep(p;q)\vee\dep(r;q)$ is a validity of $\mathcal{D}$.
In fact, every formula of the
type $\dep(p_1,\dots,p_k;q)\vee\dep(r_1,\dots,r_n;q)$ is a
validity of $\mathcal{D}$, because every SD-model $W$ can
be split into sets $U,V\subseteq W$ such that $U\cup V = W$ and
each assignment in $U$ satisfies $q$ while each assignment in $V$ satisfies $\neg q$.
Before we discuss how $\mathcal{L}_{\dep}$ deals with
the above formulae, we note once more that 
our analysis assumes that $\vee$ should
correspond to the natural language ``\emph{or}."
We do not want to claim that the natural language word \emph{or}
has always a unique interpretation.\footnote{There exist arguments
essentially promoting the uniqueness of the meaning of
disjunction (see \cite{aloni} for an overview). On the
other hand, already for example the inclusive and exclusive
modes of \emph{or} are sometimes
taken to demonstrate ambiguity of disjunction. While we wish to
refrain from taking any definite position in this debate,
this issue is worth mentioning here. (We also want to point out that
in the formal proofs and definitions of this article, the word \emph{or} is
used in the standard inclusive fashion as is customary in
standard mathematical practice.)}
However, $\mathcal{L}_{\dep}$
works quite nicely in the above examples, as we will next demonstrate.
Concerning the formula $p\vee q$ in the beginning of the
section, we have $W\Vdash p\vee q$ iff $W\models p\vee q$.
Also, for determinacy statements $\varphi := \dep(p_1,...,p_k;q)$,
we have $W\Vdash \varphi$ iff $W\models \varphi$.
Finally, for the problematic formulae
$\dep(p;q)\vee\dep(p;q)$ and $\dep(p;q)\vee\dep(r;q)$,
the semantics of $\mathcal{L}_{\dep}$ gives
the desired interpretations: we have $W\models\dep(p;q)\vee\dep(p;q)$ iff
($W\models\dep(p;q)$ \emph{or} $W\models\dep(p;q)$)
and similarly for the formula $\dep(p;q)\vee\dep(r;q)$.
We will not try to give an elaborated
account of how well exactly $\mathcal{L}_{\dep}$
corresponds to natural language, but it is essential to
notice that $\mathcal{L}_{\dep}$ can be considered to  
have a similar level of naturalness as standard S5 modal logic or
modal logic with the universal modality.
The reason for this is that a similar Kripke style semantics is
used, and furthermore, it can be argued
that $\mathcal{L}_{\dep}$ is  simply a
\emph{fragment} of the modal logic S5.
This is because determinacy statements are
naturally definable in terms of statements about possibility:
simply consider the direct natural language 
translation of the equivalence
\begin{align*}
\dep(p;q)&\\  \leftrightarrow\ \ \ \
&\bigl(\neg\bigl(\udiam (p\wedge q)\wedge \udiam (p\wedge \neg q)\bigr)
\wedge\neg\bigl(\udiam (\neg p\wedge q)
\wedge \udiam (\neg p\wedge \neg q)\bigr)\bigr),
\end{align*}
where $\langle u\rangle$ should be read as ``it is possible that."
The natural language translation of this equivalence indeed seems
intuitively immediately appealing, and importantly, the equivalence essentially
just states the formal semantics of $\dep(p;q)$ (determinacy) in terms of
the diamonds $\udiam$ (possibility), thus demonstrating that statements of 
determinacy can be very naturally and directly formulated in terms of possibility statements.
Therefore $\mathcal{L}_{\dep}$ can be considered a
fragment of modal logic (with the universal or S5 modality in 
the particular case of this paper),
and the level of correspondence between natural language and $\mathcal{L}_{\dep}$ is
similar to the corresponding relationship for (S5) modal logic. We note that
the restriction to S5 frames is of no importance here.
We also note that the above equivalence deals
only with the simple determinacy formula $\dep(p;q)$ but it is easy to generalize
our argument to more complex formulae. (See also Proposition \ref{prop:Dep}.)
%
%
%
%

%
In addition to disjunction $\vee$,
also the semantics of negation $\neg$ in $\D$ can be counterintuitive if
the reading ``it is not the case that" is desired for $\neg$.
For example, let $p$ and $q$ denote the assertions 
``the Riemann hypothesis holds" and ``it is raining," respectively.
Let $W$ be a nonempty SD-model.
Now we have $W\not\Vdash\neg \dep(p;q)$ in $\mathcal{D}$.
In $\mathcal{L}_{\dep}$, we 
have $W\models\neg \dep(p;q)$ in (for example) every model $W$ where
the truth value of $p$ is the  same in every possible world and where $q$ is
true in some worlds and false in others.\footnote{
Perhaps alternative readings of $\neg$ could work
better in $\mathcal{D}$. For example, in \cite{doubleteam2},
the reading ``\emph{it is never the case that}" for the
negation of $\mathcal{D}$ is suggested.
This reading is not meant to necessarily have any temporal connotations,
but instead could alternatively be read as ``\emph{it is impossible that}."
We shall not try analyze here how well such a reading could actually work.}
Finally, concerning $\mathcal{L}_{\dep}$, it is worth
noting the triviality that just as in S5, $W\models\neg\varphi$ is not in general
equivalent to $W\not\models\varphi$, because $W\models\neg\varphi$
means that $W,w\not\models\varphi$ for all $w\in W$. 
Analogously, $W\models\varphi\vee\psi$ is 
not in general equivalent to $(W\models\varphi\text{ or }W\models\psi)$.
We note that, by Proposition \ref{classicalproposition},
we have $W\models\chi$ iff $W\Vdash\chi$ for all
formulae of propositional logic in
negation normal form, so $\mathcal{D}$ and $\mathcal{L}_{\dep}$
are similar with respect to formulae of propositional logic PL.
%

%
%
To give an example of how the semantics of $\neg$
works in $\mathcal{D}$ and $\mathcal{L}_{\dep}$
in the context of formulae of PL,
let $p$ and $q$ denote the assertions ``John has a cat"
and ``John is married," respectively. Assume a
scenario where it is agreed that the
possible worlds are $w_1 = \{p\}$ and $w_2 = \emptyset$, i.e.,
John may or may not have a cat, but he is definitely not married.
Let $W = \{w_1,w_2\}$.
The assertion ``It is not the case that John is married"
seems correct, and indeed, we have $W\Vdash\neg q$.
Note, however, that even though $W\not\Vdash p$, 
the claim ``It is not the case that John has a cat"
would seem odd. To make the claim $W\not\Vdash p$,
one would have to assert, e.g., that  it is possible
that John does not have a cat, or that it is not
necessarily the case that John has a cat.
\section{Comparing the expressive powers of
$\D$, $\I$, $\dlang$ and $\mathcal{L}_{\ind}$}\label{express}
We have earlier observed (Proposition \ref{expr}) that $\mathcal{L}_{\dep}$ embeds into
$\mathcal{L}_{\ind}$, and also that $\D$ embeds into
$\I$. In this section we will complete our discussion concerning the 
expressive powers of the logics $\mathcal{L}_{\dep},\mathcal{L}_{\ind},\D$ and
$\I$.

\subsection{$\I$ does not embed into $\D$}\label{non-embed}

%
%
%
%
%
Let $\varphi,\psi\in\mathit{FOR}(\D)\cup\mathit{FOR}(\I)$.
We write $\varphi\equiv_{team}\psi$ if the equivalence
$W\Vdash\varphi\Leftrightarrow W\Vdash\psi$ holds
for all SD-models $W$. The first result we wish to point out is
well-known; we prove it for the sake of completeness. 
\begin{proposition}
The logic $\I$ does not embed into
$\D$: there exists a formula $\varphi$ of $\I$
such that for all formulae $\psi$ of $\D$, the
equivalence $\varphi\equiv_{team}\psi$ fails.
\end{proposition}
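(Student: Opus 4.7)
The plan is to exploit the well-known fact that Propositional Dependence Logic enjoys a downward closure property (on teams) that Propositional Independence Logic does not. First I would establish, by induction on the structure of formulae of $\D$, the following closure lemma: for every formula $\varphi\in\mathit{FOR}(\D)$ and every SD-model $W$, if $W\Vdash_{\D}\varphi$ and $W'\subseteq W$, then $W'\Vdash_{\D}\varphi$. The induction is routine: for literals $p,\neg p$ the condition ``true at every world'' clearly transfers to subsets; for a dependence atom $\dep(p_1,\ldots,p_k;q)$ the truth condition is a universally quantified statement over pairs in $W$, which is only weakened by restricting to a subteam; the negated dependence atom is handled because $W'\subseteq W=\emptyset$ forces $W'=\emptyset$; conjunction is immediate; and for disjunction, a split $W=U\cup V$ witnessing $W\Vdash_{\D}\varphi\vee\psi$ restricts along $W'\subseteq W$ to the split $W'=(U\cap W')\cup(V\cap W')$, to which the induction hypothesis applies.

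Next I would exhibit an $\I$-formula that \emph{fails} downward closure. The natural candidate is the simple independence atom $\varphi:=p\,\ind\,q$. Let $W$ be the SD-model containing all four assignments to $\{p,q\}$, i.e.\ realising each of the four types. Then for any pair $w_1,w_2\in W$ the combined assignment $v$ with $v(p)=w_1(p)$ and $v(q)=w_2(q)$ already lies in $W$, so $W\Vdash_{\I}p\,\ind\,q$. On the other hand, take the subteam $W':=\{w_{11},w_{00}\}\subseteq W$ where $w_{11}(p)=w_{11}(q)=1$ and $w_{00}(p)=w_{00}(q)=0$. Picking $w_1:=w_{11}$ and $w_2:=w_{00}$, we would need some $v\in W'$ with $v(p)=1$ and $v(q)=0$; no such $v$ exists, so $W'\not\Vdash_{\I}p\,\ind\,q$.

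Combining the two ingredients concludes the proof: if there were some $\psi\in\mathit{FOR}(\D)$ with $p\,\ind\,q\,\equiv_{team}\,\psi$, then from $W\Vdash_{\I}p\,\ind\,q$ we would obtain $W\Vdash_{\D}\psi$, and by the closure lemma $W'\Vdash_{\D}\psi$, hence $W'\Vdash_{\I}p\,\ind\,q$, contradicting the computation above. Therefore no equivalent $\D$-formula exists.

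The only step requiring any care is the downward-closure induction, and even there the sole nontrivial case is disjunction, handled by the restriction-of-splits argument sketched above; everything else is bookkeeping. This makes the ``main obstacle'' essentially a presentational one rather than a mathematical one, since the key semantic asymmetry between $\dep$ and $\ind$ — the former being a universally quantified pair condition, the latter mixing universal and existential quantifiers over $W$ — is precisely what distinguishes downward-closed behaviour from non-downward-closed behaviour.
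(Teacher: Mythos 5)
Your proposal is correct and follows essentially the same route as the paper: exhibit $p\,\ind\,q$ together with the full four-assignment model and its two-element ``diagonal'' subteam, and appeal to the downward closure property of $\D$ to rule out any equivalent $\D$-formula. The only difference is that you spell out the inductive proof of downward closure (correctly, including the split-restriction argument for $\vee$), whereas the paper simply cites it as well known.
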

\begin{proof}
It is well-known, and easy to show, that $\D$ satisfies
the following \emph{downwards closure property}: if $W\Vdash_{\D}\varphi$ and $U\subseteq W$,
then we have $U\Vdash_{\D}\varphi$.

Now, define an SD-model $W = \{w_1,w_2,w_3,w_4\}$ where
the four states represent all truth assignments for propositions $p$ and $q$. 
More precisely, let $w_1(p) = w_1(q) = w_2(p) = w_3(q) = 1$ and
$w_2(q) = w_3(p) = w_4(p) = w_4(q) = 0$.
Define $U := \{w_1,w_4\}$.
Consider the formula $\varphi := p\, \ind q$.
It is clear that $W\Vdash\varphi$ and $U\not\Vdash\varphi$.
Therefore, due to the downwards closure
property of $\D$, no
formula $\psi$ of $\D$ can satisfy the
equivalence $\varphi\equiv_{team}\psi$.
\end{proof}

\subsection{
Embedding $\mathcal{L}_{\ind}$ into $\dlang$ via a concrete translation}
 \label{embed}

While $\D$ embeds into $\I$ but not vice versa,
the situation is different for $\mathcal{L}_{\dep}$ and $\mathcal{L}_{\ind}$.
In Section \ref{indeplogic} we established that $\mathcal{L}_{\dep}$
embeds into $\mathcal{L}_{\ind}$. We now define a translation 
showing that $\mathcal{L}_{\ind}$ embeds into $\mathcal{L}_{\dep}$, too.
While it is straightforward to observe, based on
Propositions \ref{expressivitycharacterization} and \ref{expressivecompleteness}, 
that $\mathcal{L}_{\ind}$ indeed embeds into $\mathcal{L}_{\dep}$,
the concrete translation below will be of interest later on 
when we discuss \emph{compositional translations} (to be defined) between
the logics under investigation.
We define the following translation $s:\mathit{FOR}(\mathcal{L}_{\ind})\rightarrow
\mathit{FOR}(\mathcal{L}_{\dep})$:
\begin{enumerate}
\item
$s(p) := p$, for $p\in\mathit{PROP}$
\item
$s(\neg\varphi) := \neg s(\varphi)$
\item
$s(\varphi\wedge\psi) := s(\varphi)\wedge s(\psi)$
\item
We then translate the formula
$(\fo_1,\ldots,\fo_k)\, \ind_{(\fod_1,\ldots,\fod_m)}(\fob_1,\ldots,\fob_n)$
in a way that derives from Proposition \ref{prop:Ind}.  
First we define
\begin{multline*}
\mathcal{B} \, :=
\conj(\{ s(\fo_1),\ldots,
s(\fo_k)\}) \times \ \conj(\{s(\fod_1),\ldots,s(\fod_m)\})\\
\times  \conj(\{s(\fob_1),\ldots,s(\fob_n)\}).
\end{multline*}
For the special case 
where $m = 0$, recall that $\mathit{Conj}(\emptyset) = \{\top\}$.
Now, let 

$s((\fo_1,\ldots,\fo_k)\, \ind_{(\fod_1,\ldots,\fod_m)}(\fob_1,\ldots,\fob_n)): = $
$$\bigwedge\limits_{
(\varphi,\theta,\psi)\, \in\, \mathcal{B}}
\Big(\bigl(\udiam(\fod \wedge \fo)
\wedge\udiam(\fod \wedge \fob)\bigr)
\rightarrow\ \udiam(\fod \wedge \fo \wedge \fob) \Big).$$
\end{enumerate}
It is easy to see that the translation of the operator $\ind$ given by
the formula above describes the meaning of $\ind$ quite directly in
terms of $\udiam$.
The proof of the next claim is straightforward,
using the translation $s$ defined above.
\begin{proposition}
$\mathcal{L}_{\ind}$ embeds into $\mathcal{L}_{\dep}$ i.e.,
for each formula $\varphi$ of $\mathcal{L}_{\ind}$,
there exists an equivalent formula $\psi$ of $\mathcal{L}_{\dep}$
in the sense that for all $W$ and all $w\in W$, we
have $W,w\models\varphi$ iff $W,w\models\psi$.
\end{proposition}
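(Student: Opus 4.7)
The plan is to proceed by induction on the structure of $\varphi \in \mathit{FOR}(\indlang)$, proving that for every SD-model $W$ and every $w \in W$ one has $W, w \models_{\indlang} \varphi$ iff $W, w \models_{\dlang} s(\varphi)$. The atomic case $\varphi = p$ is immediate because $s(p) = p$ and the two logics interpret propositional symbols identically. The cases $\varphi = \neg\alpha$ and $\varphi = \alpha_1 \rightarrow \alpha_2$ (hence also $\alpha_1 \wedge \alpha_2$, which is a definable abbreviation) follow directly from the induction hypothesis since the semantic clauses for $\neg$ and $\rightarrow$ are formally identical in $\models_{\indlang}$ and $\models_{\dlang}$.

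The only interesting case is $\varphi = (\fo_1,\ldots,\fo_k)\, \ind_{(\fod_1,\ldots,\fod_m)}\,(\fob_1,\ldots,\fob_n)$. Here I would first invoke Proposition \ref{prop:Ind} to rewrite $\varphi$, modulo $\equiv_{\indlang}$, as the big conjunction
$$\bigwedge_{(\chi,\eta,\xi)\in B}\Bigl(\bigl(\udiamp(\eta \wedge \chi) \wedge \udiamp(\eta \wedge \xi)\bigr) \rightarrow \udiamp(\eta \wedge \chi \wedge \xi)\Bigr),$$
where $B = \conj(\{\fo_1,\ldots,\fo_k\}) \times \conj(\{\fod_1,\ldots,\fod_m\}) \times \conj(\{\fob_1,\ldots,\fob_n\})$. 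By construction, $s(\varphi)$ is the syntactically analogous big conjunction in $\dlang$, but with each $\fo_i,\fod_i,\fob_i$ occurring inside a type replaced by $s(\fo_i),s(\fod_i),s(\fob_i)$, and each $\udiamp$ replaced by $\udiam$. Thus $s$ essentially imports the semantic characterisation supplied by Proposition \ref{prop:Ind} from $\indlang$ into $\dlang$.

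To close the $\ind$ case, I would verify that $\udiamp$ in $\indlang$ and $\udiam$ in $\dlang$ both express the standard existential modality: $W,w \models \udiamp\alpha$ (resp.\ $W,w \models \udiam\alpha$) iff there exists $u \in W$ with $W,u \models \alpha$ in the appropriate logic. For $\udiam$ this is already noted in Section \ref{depsection}; for $\udiamp$ one unfolds $\uboxp\alpha = \alpha \wedge (\alpha \ind \alpha)$ and uses that $\alpha \ind \alpha$ holds in $W$ iff $\alpha$ is constant on $W$, so $\uboxp$ and $\ubox$ have the same truth condition. Combined with the induction hypothesis applied to each $\fo_i,\fod_i,\fob_i$, and with the equivalent replacement properties from Lemmas \ref{lem:Ind-ER} and \ref{lem:Dep-ER} used to propagate the cross-logic equivalence under each diamond, one obtains $W,w \models_{\indlang}\varphi$ iff $W,w \models_{\dlang} s(\varphi)$.

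The main obstacle is bookkeeping: matching each propositional type $\chi \in \conj(\{\fo_1,\ldots,\fo_k\})$ appearing in Proposition \ref{prop:Ind} with the corresponding type $\chi' \in \conj(\{s(\fo_1),\ldots,s(\fo_k)\})$ actually occurring in $s(\varphi)$, and checking that the induction hypothesis genuinely propagates through such conjunctions of literals so that $W,u \models_{\indlang} \chi$ iff $W,u \models_{\dlang} \chi'$. Once this correspondence is pinned down, the remainder is a direct syntactic unfolding with no further difficulty.
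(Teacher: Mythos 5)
Your proposal is correct and follows exactly the route the paper intends: the paper's own proof consists of the single remark that the claim is ``straightforward, using the translation $s$ defined above,'' and your induction --- with Proposition \ref{prop:Ind} handling the $\ind$ case and the observation that $\udiamp$ and $\udiam$ both express the existential (universal-frame) diamond --- is precisely the detail being elided. The bookkeeping you flag about matching types in $\conj(\{\fo_1,\ldots,\fo_k\})$ with types in $\conj(\{s(\fo_1),\ldots,s(\fo_k)\})$ is indeed the only point requiring care, and it goes through by the induction hypothesis as you describe.
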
 

\subsection{
Strict embedding of $\D$ and $\I$ into $\dlang$ and $\mathcal{L}_{\ind}$}
\label{strictembed}

Next we will show that the team-semantics-based logic $\I$
is strictly contained in the Kripke-style
logic $\mathcal{L}_{\dep}$ in the following sense.
\begin{enumerate}
\item
For each formula $\varphi$ of $\I$, there exists a
formula $\varphi'$ of $\dlang$ such
that $\varphi$ and $\varphi'$ define the same class of SD-models, i.e., 
$W\Vdash_{\I}\varphi\Iff W\models_{\dlang}\varphi'$
for all SD-models $W$.
\item
There exists a formula $\psi$ of $\mathcal{L_{\dep}}$
which is not equivalent to any  formula of $\I$,  i.e.,
for all formulae $\psi'$ of\, $\I$, 
there exists a model $W$ such that
($W\Vdash\psi'\text{ and } W\not\models\psi$) or
($W\not\Vdash\psi'\text{ and }W\models\psi$).
\end{enumerate}
The claim 1 above is essentially obvious, since
all classes of models definable in $\I$ are closed under finite propositional equivalence,
and due to Proposition \ref{expressivitycharacterization}, $\dlang$
can define all such model classes. However, we will provide an
explicit and effective translation of $\I$ into $\dlang$ 
which is interesting in its own right and also elucidates the semantics of $\I$.
Furthermore, despite the
simplicity of our translation, we will show in Section
\ref{compositionality} that there does not exist a 
\emph{compositional translation} from $\I$  into $\mathcal{L}_{\dep}$.
Recall once again the notion of a type normal form
formula and related notions from Section \ref{preliminaries}.
%
%
Let $\Phi$ be a finite nonempty set of \emph{proposition symbols} and 
$\chi = \bigvee\{\chi_1,\ldots,\chi_k\}$ a formula in $\mathit{DNF}(\Phi)$.
We let $\mathit{SPLIT}(\chi)$ denote the set of pairs $(\alpha,\beta)$
of type normal form formulae in $\mathit{DNF}(\Phi)$
such that if $\alpha =  \bigvee\{\alpha_1,\ldots,\alpha_m\}$
and $\beta = \bigvee\{\beta_1,\ldots,\beta_n\}$, then we have
$$\{\chi_1,\ldots,\chi_k\} = \{\alpha_1,\ldots,\alpha_m\}\cup\{\beta_1,\ldots,\beta_n\}.$$
{
}
Let $\chi\in\mathit{DNF}(\Phi)$. We define the following
translation $t_{\chi}$ from $\I$ into $\dlang$:
\begin{enumerate}
\item
$t_{\chi}(p) := \ubox(\chi\rightarrow p)$
and $t_{\chi}(\neg p) := \ubox(\chi\rightarrow\neg p)$
\item
To translate the formula $(p_1,\ldots,p_k)\, \ind_{(q_1,\ldots,q_m)}\, (r_1,\ldots,r_n)$,
we use a suitably modified version of the equivalence in Proposition \ref{prop:Ind}.
We first define
$$B\, := \conj(\{p_1,\ldots,p_k\}) \times \ \conj(\{q_1,
\ldots,q_m\}) \times  \conj(\{r_1,\ldots,r_n\}).$$
%
%
%
%
%
%
We then define
$t_{\chi} \bigl((p_1,\ldots,p_k)\, \ind_{(q_1,\ldots,q_m)}\, (r_1,\ldots,r_n)\bigr) :=$\\
$$\bigwedge\limits_{
(\varphi,\psi,\theta)\, \in\, B}  
\Big(\bigl(\udiam(\chi\wedge\psi\wedge\varphi)
\wedge\udiam(\chi\wedge\psi\wedge\theta)\bigr)
\rightarrow\ \udiam(\chi\wedge\psi\wedge\varphi\wedge\theta) \Big).
$$
%
%
%
%
\item
$t_{\chi}(\varphi\wedge\psi) := t_{\chi}(\varphi)\wedge t_{\chi}(\psi)$
\item
$t_{\chi}(\varphi\vee\psi) := \bigvee\limits_{(\alpha,\beta)\, \in\,
\mathit{SPLIT}(\chi)}\bigl(t_{\alpha}(\varphi)\wedge t_{\beta}(\psi)\bigr)$
\end{enumerate}
%

%
%
%
%

%
If $\varphi$ is a formula of $\I$ and $\Phi$ the set of
proposition symbols in $\varphi$, we let $\chi(\varphi)$ denote the
formula in $\mathit{DNF}(\Phi)$
that contains as disjuncts \emph{all}
types over $\Phi$. Note that the formula $\chi(\varphi)$ is a
tautology.
We then prove that our translation of $\I$ into $\dlang$
preserves truth. 
%
%
%
%
%
%
%
%

\begin{lemma}\label{dependencetranslationtheorem}
$W\Vdash_{\I}\varphi$ iff $W\models_{\dlang} t_{\chi(\varphi)}(\varphi)$.
\end{lemma}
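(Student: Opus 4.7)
The plan is to prove by induction on $\varphi\in\mathit{FOR}(\I)$ the following strengthening of the stated equivalence: for every finite $\Phi\subseteq\mathit{PROP}$ containing the proposition symbols of $\varphi$, every $\chi\in\mathit{DNF}(\Phi)$, and every SD-model $W$,
\[
W\models_{\dlang} t_\chi(\varphi) \iff T_\chi\Vdash_\I\varphi,
\]
where $T_\chi := \{w\in W : W,w\models_{\dlang}\chi\}$. The lemma itself then drops out by specialising to $\chi = \chi(\varphi)$, which by construction has every type over $\Phi$ as a disjunct and is therefore a tautology, so $T_{\chi(\varphi)} = W$.

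Before entering the induction I would record two easy auxiliary facts. First, inspection of the clauses of $t_\chi$ shows that every formula in its range is built solely from $\ubox$, $\udiam$, and Boolean connectives, hence is \emph{flat}: its truth at a world of $W$ is independent of that world, so writing $W\models t_\chi(\varphi)$ is unambiguous and matches any pointed truth $W,w\models t_\chi(\varphi)$. Second, a short separate induction on $\varphi$ shows that $T\Vdash_\I\varphi$ depends only on the set of $\Phi$-types realised in $T$; the only nontrivial case is disjunction, where a witnessing split $T = U\cup V$ is transported to any type-equivalent team by picking out points whose $\Phi$-type occurs in $U$ and in $V$ respectively, after which the inductive hypothesis gives satisfaction.

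The induction then goes case by case. For literals $p$ and $\neg p$ the equivalence unpacks directly: $T_\chi\Vdash p$ iff every $w\in T_\chi$ has $w(p)=1$, iff $W\models\ubox(\chi\to p) = t_\chi(p)$. For an independence atom, unfolding the conjunction over $B$ in the definition of $t_\chi$ turns it into the assertion that for every triple $(\varphi_0,\psi_0,\theta_0)\in B$ of complete types over the $p$'s, $q$'s, and $r$'s, if $T_\chi$ contains a point of $p$-type $\varphi_0$ and $q$-type $\psi_0$ and a point of $q$-type $\psi_0$ and $r$-type $\theta_0$, then $T_\chi$ contains a point realising all three types simultaneously; this is exactly the team-semantic clause for $\ind$ applied to $T_\chi$. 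The conjunction step is immediate from the inductive hypothesis together with the flatness remark.

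The main obstacle is the disjunction case: I need to align the \emph{semantic} splittings $U\cup V = T_\chi$ appearing in the team-semantic clause for $\vee$ with the \emph{syntactic} splittings $(\alpha,\beta)\in\mathit{SPLIT}(\chi)$ appearing in the translation. One direction is easy: each $(\alpha,\beta)\in\mathit{SPLIT}(\chi)$ yields $T_\alpha\cup T_\beta = T_\chi$, so if $W\models t_\alpha(\varphi)\wedge t_\beta(\psi)$ then by the inductive hypothesis $T_\chi\Vdash\varphi\vee\psi$. For the converse, given $U\cup V = T_\chi$ witnessing $T_\chi\Vdash\varphi\vee\psi$, I would let $\alpha$ be the disjunction of the $\Phi$-types realised by points of $U$ together with those disjuncts of $\chi$ that are not realised in $W$ at all (added so that the two sets of disjuncts together cover $\chi$), and let $\beta$ be the disjunction of the $\Phi$-types realised by points of $V$. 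This gives $(\alpha,\beta)\in\mathit{SPLIT}(\chi)$; and $T_\alpha$, $T_\beta$ are the type-saturations of $U$, $V$ inside $W$. The type-invariance observation then lifts $U\Vdash\varphi$ and $V\Vdash\psi$ to $T_\alpha\Vdash\varphi$ and $T_\beta\Vdash\psi$, and the inductive hypothesis delivers $W\models t_\alpha(\varphi)\wedge t_\beta(\psi)$, closing the induction.
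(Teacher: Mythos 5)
Your proposal is correct and follows essentially the same route as the paper's proof: the same strengthened induction hypothesis $W_\chi\Vdash_{\I}\psi\Leftrightarrow W\models_{\dlang}t_\chi(\psi)$ for all $\chi\in\mathit{DNF}(\Phi)$, and the same alignment of semantic splits of $W_\chi$ with syntactic pairs in $\mathit{SPLIT}(\chi)$ via type-saturation in the disjunction case. If anything, you are more explicit than the paper about the two facts it treats as ``clear''---flatness of the translated formulae and invariance of team satisfaction under $\Phi$-equivalence---which is a welcome addition rather than a deviation.
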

\begin{proof}
Let $\varphi$ be a formula of $\I$ and $W$ a model.
Let $\Phi$ be the set of proposition symbols that occur in the
formula $\varphi$. If $\chi\in\mathit{DNF}(\Phi)$, we let 
$W_{\chi}$ 
denote  the set of worlds in $W$ that satisfy $\chi$. 
We will show that for every $\chi\in\mathit{DNF}(\Phi)$
and every subformula $\psi$ of $\varphi$, we have 
$$W_{\chi}\Vdash_{\I}\psi\Iff W\models_{\dlang} t_{\chi}(\psi).$$
The claim of the Lemma will then follow, as 
$W_{\chi(\varphi)} = W$.

The proof proceeds by induction on the structure of $\psi$.
The cases for proposition symbols, negated proposition symbols and conjunctions are straightforward. The argument for $\ind $ is easy, as our
translation in that case captures quite directly the semantics of $\ind$.
We now proceed to the case $\psi = \psi'\vee\psi''$.
When going through the argument below, it helps to keep in mind the
trivial technicality that for an SD-model $U$ and a proper subset $S$ of $\mathit{PROP}$,
there may exist several assignments in $U$ that are
equivalent with respect to $S$, i.e., assignments
that satisfy exactly the same propositions in $S$ (but differ elsewhere).
Assume that $W_{\chi}\Vdash_{\I}\psi'\vee\psi''$.
Thus there exist sets $S',S''\subseteq W_{\chi}$ such that $S'\Vdash_{\I}\psi'$ and $S''\Vdash_{\I}\psi''$, and furthermore, $S'\cup S'' = W_{\chi}$. 
Therefore there exists a pair $(\alpha,\beta)\in\mathit{SPLIT}(\chi)$\,
such that $S' \subseteq W_{\alpha}$ and  $S''\subseteq W_{\beta}$,
and furthermore, $W_{\alpha}\equiv_{\Phi} S'$ and $W_{\beta}\equiv_{\Phi} S''$
(recall Definition \ref{equivalentmodelsdefinition}).
Therefore clearly $W_{\alpha}\Vdash\psi'$ and $W_{\beta}\Vdash\psi''$.
%
%
%
%
%
%
Hence, by the induction hypothesis, we have $W\models_{\dlang} t_{\alpha}(\psi')$
and $W\models_{\dlang} t_{\beta}(\psi'')$.
Thus $W\models_{\dlang} t_{\alpha}(\psi') \land t_{\beta}(\psi'')$,
whence we conclude that $W\models_{\dlang} t_{\chi}(\psi'\vee\psi'')$.
For the converse, assume that $W\models_{\dlang} t_{\chi}(\psi'\vee\psi'')$.
Therefore there exist type normal form formulae $\alpha,\beta$
such that $(\alpha,\beta)\in\mathit{SPLIT}(\chi)$, and furthermore,
$W\models_{\dlang} t_{\alpha}(\psi')$ and $W\models_{\dlang} t_{\beta}(\psi'')$.
By the induction hypothesis, we have $W_{\alpha}\Vdash_{\I}\psi'$
and $W_{\beta}\Vdash_{\I}\psi''$. Since $(\alpha,\beta)\in\mathit{SPLIT}(\chi)$,
we have $W_{\alpha}\cup W_{\beta} = W_{\chi}$,
and therefore $W_{\chi}\Vdash_{\I}\psi'\vee\psi''$.
\end{proof}
We are now ready to prove the following theorem.
%
%

%
\begin{theorem}
The logic $\I$ is strictly contained in $\dlang$,
i.e.:
\begin{enumerate}
\item
For each $\varphi\in\mathit{FOR}(\I)$, there exists a
formula $\varphi'\in\mathit{FOR}(\mathcal{L}_{\dep})$ (which can 
be found effectively), 
such that
for all SD-models $W$, it holds that $W\Vdash\varphi$ iff $W\models\varphi'$.
\item
There exists a formula $\psi\in\mathit{FOR}(\mathcal{L}_{\dep})$
that is not expressible in $\I$, i.e., for all $\chi\in\I$,
there exists an SD-model $W$ such that the equivalence
$W\models\psi\Leftrightarrow W\Vdash\chi$ fails.
\end{enumerate}
\end{theorem}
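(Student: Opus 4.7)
Part~(1) is immediate from Lemma~\ref{dependencetranslationtheorem}: for any $\varphi\in\mathit{FOR}(\I)$, the formula $t_{\chi(\varphi)}(\varphi)\in\mathit{FOR}(\mathcal{L}_{\dep})$ is computed effectively from $\varphi$ and satisfies $W\Vdash\varphi\iff W\models t_{\chi(\varphi)}(\varphi)$ for every SD-model $W$.

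For part~(2) I would take the witness formula $\psi := \neg\cdep p$. Unpacking the Kripke-style semantics of $\mathcal{L}_{\dep}$, $W\models\psi$ holds iff $W,w\not\models\cdep p$ for every $w\in W$, which in turn holds iff $W=\emptyset$ or $p$ is not constant on $W$. In particular, no singleton team lies in the class defined by $\psi$, while every two-element team $\{w_1,w_2\}$ with $w_1(p)\neq w_2(p)$ does.

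The key technical step is a \emph{flatness lemma} for $\I$. Given $\chi\in\I$, let $\chi^{\ast}$ denote the classical propositional formula obtained from $\chi$ by replacing each independence atom by $\top$. The plan is to prove, by a routine structural induction, that (i) if $W\Vdash_{\I}\chi$, then $w\models\chi^{\ast}$ (classically) for every $w\in W$, and (ii) $\{w\}\Vdash_{\I}\chi$ iff $w\models\chi^{\ast}$. The cases of propositional literals and independence atoms are immediate (in the latter, $\chi^{\ast}=\top$), and conjunction propagates straightforwardly from the team-semantic clause. The subtle point is disjunction: from $W=U\cup V$ with $U\Vdash\varphi$ and $V\Vdash\psi$, every $w\in W$ lies in $U$ or in $V$ and so, by the inductive hypothesis, classically satisfies $\varphi^{\ast}$ or $\psi^{\ast}$; the singleton clause of~(ii) additionally uses the empty-team property $\emptyset\Vdash_{\I}\theta$ to reduce $\{w\}\Vdash\varphi\vee\psi$ to the disjunction $\{w\}\Vdash\varphi$ or $\{w\}\Vdash\psi$.

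With the flatness lemma in hand, part~(2) closes quickly. If some $\chi\in\I$ defined the same class as $\psi$, then no singleton would satisfy $\chi$, whence by~(ii) $\chi^{\ast}$ would be classically unsatisfiable; but then by~(i) no nonempty team could satisfy $\chi$, contradicting the fact that $\{w_1,w_2\}\models\psi$ (and hence should satisfy $\chi$) whenever $w_1(p)\neq w_2(p)$. The main obstacle is carrying out the $\vee$-case of the flatness induction cleanly; the surrounding argument is essentially bookkeeping.
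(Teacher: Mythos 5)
Your proof is correct, and while part (1) coincides with the paper (both invoke Lemma \ref{dependencetranslationtheorem}), your part (2) takes a genuinely different route to the same witness $\neg\cdep p$. The paper argues with two concrete models: $U$, consisting of two worlds differing only on $p$, and $U'$, the singleton obtained by keeping only the $p$-falsifying world; it shows by induction on $\I$-formulae that $U\Vdash\varphi$ implies $U'\Vdash\varphi$, and concludes since $\neg\cdep p$ separates $U$ from $U'$. You instead prove a general flatness lemma relating team satisfaction to classical satisfaction of the skeleton $\chi^{\ast}$ (independence atoms replaced by $\top$): nonempty-team satisfaction forces pointwise satisfaction of $\chi^{\ast}$, and singleton satisfaction is exactly classical satisfaction of $\chi^{\ast}$. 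Both arguments ultimately rest on the same observation --- singleton teams trivially satisfy every independence atom, and your induction (including the $\vee$-case, which goes through exactly as you sketch, using the empty-team property for the singleton direction) is sound. The paper's version is shorter and more self-contained, tracking only two fixed models; your lemma is stronger and reusable, since it completely characterises which singletons satisfy a given $\I$-formula and yields a necessary condition on arbitrary nonempty teams, which would let you rule out expressibility of other $\mathcal{L}_{\dep}$-formulae (any formula true in some nonempty model all of whose singleton restrictions fail it) by the same two-line argument.
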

\begin{proof}
%
%
By Lemma \ref{dependencetranslationtheorem} there exists an
effective translation from $\I$ into $\mathcal{L}_{\dep}$.
Hence we only need to prove the second claim of the theorem.
In fact the claim follows relatively easily from Proposition \ref{expressivitycharacterization}
and the proof of Theorem 4.2 of \cite{kontivan}, but we will establish the claim
here explicitly.
We will show that the $\dlang$-formula $\neg\cdep p$ is not
expressible in $\I$, i.e., there is no formula $\psi$ of $\I$
such that $W\Vdash_{\I}\psi$ iff $W\models_{\dlang}\neg\cdep p$.
We first define two models $U$ and $U'$, where
$U$ consists of two worlds, one satisfying $p$
and the other one not, and $U'$ consists of
single world that does not  satisfy $p$.
Furthermore, for all other proposition
symbols $q$, we define $q$ to be false in each world of
the models $U$, $U'$.
We then show by induction on the structure of formulae 
that for all $\varphi\in\mathit{FOR}(\I)$, we have
$$U\Vdash_{\I}\varphi\ \Rightarrow\ U'\Vdash_{\I}\varphi.$$
For the literals $p$ and $\neg p$ this is immediate, as $U\not\Vdash_{\I} p$ and $U\not\Vdash_{\I}\neg p$.
For other literals $q$, $\neg q$, etc., the implication holds
because $U\not\Vdash_{\I} q$
and $U'\Vdash_{\I}\neg q$.
In order to deal with the
operator $\ind$, notice that $U'$ satisfies all formulae of the type $(p_1,\ldots,p_k)\, \ind_{(q_1,\ldots,q_m)}(r_1,\ldots,r_n)$, since the model $U'$ contains only a single world.
The case for  $\wedge$ follows
immediately by the induction hypothesis.

We then consider the case for $\vee$. 
Assume that $U\Vdash_{\I}\psi\vee\psi'$.
Therefore there exist sets $S,S'\subseteq U$ such that $S\Vdash_{\I}\psi$ and $S'\Vdash_{\I}\psi'$,
and furthermore, $S\cup S'= U$.
We may assume, by symmetry, that $S$ contains
the assignment in $U$ that does not satisfy $p$.
We consider two cases.
1. Assume that $S = U$. Then $U'\Vdash_{\I}\psi$ follows directly by the induction hypothesis.
Furthermore, we have
$\emptyset\Vdash_{\I}\chi$ for every formula $\chi\in\mathit{FOR}(\I)$,
whence the condition
\begin{equation*}
U'\Vdash_{\I}\psi\text{ and }\emptyset\Vdash_{\I}\psi'
\end{equation*}
holds. Therefore $U'\Vdash_{\I}\psi\vee\psi'$.
2. Assume that $S$ is the singleton not satisfying $p$.
Notice now that the world in $S$ and the world in $U'$ satisfy
exactly the same proposition symbols. Thus $S = U'$, whence 
$U'\Vdash_{\I}\psi$. 
Therefore the condition
\begin{equation*}
U'\Vdash_{\I}\psi\text{ and }\emptyset\Vdash_{\I}\psi'
\end{equation*}
holds again,
and we hence conclude that $U'\Vdash_{\I}\psi\vee\psi'$.
Finally, note that $U\models_{\dlang}
\neg\cdep p$, while $U'\not\models_{\dlang}\neg\cdep p$.
Therefore $\neg\cdep p$ cannot be expressible by a formula of $\I$. 
\end{proof}
\begin{corollary}
The logics $\D$
and $\I$ are both strictly contained in both
$\dlang$ and $\mathcal{L}_{\ind}$.
\end{corollary}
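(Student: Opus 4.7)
The plan is to derive the corollary by composing the embeddings already in hand and propagating the separating formula from the preceding theorem. No new machinery is required; the corollary essentially packages the pairwise comparisons into a single statement.

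For the four containments, I would chain the existing translations. By Proposition \ref{expr}, $\D$ embeds into $\I$ via the translation $t'$ sending $\dep(p_1,\ldots,p_k;q)$ to $q\, \ind_{(p_1,\ldots,p_k)}\, q$. The preceding theorem provides an explicit translation of $\I$ into $\dlang$. Composing yields $\D \hookrightarrow \dlang$. Further composing with the translation $t:\dlang\to \mathcal{L}_{\ind}$ from Section \ref{indeplogic} then gives $\D \hookrightarrow \mathcal{L}_{\ind}$ and, similarly, $\I \hookrightarrow \mathcal{L}_{\ind}$.

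For the four strictness claims, I would reuse the separator $\neg\cdep p$. The preceding theorem already shows that $\neg\cdep p \in \dfor$ is not equivalent to any formula of $\I$. The same formula separates $\dlang$ from $\D$: if $\neg\cdep p$ were equivalent (in the sense of defining the same class of SD-models) to some $\varphi \in \mathit{FOR}(\D)$, then by Proposition \ref{expr} it would also be equivalent to $t'(\varphi) \in \mathit{FOR}(\I)$, contradicting the theorem. To transport the separator into $\mathcal{L}_{\ind}$, apply the translation $t:\dlang \to \mathcal{L}_{\ind}$: the formula $t(\neg\cdep p) \in \mathit{FOR}(\mathcal{L}_{\ind})$ defines the same class of SD-models as $\neg\cdep p$, so by the same argument it cannot be equivalent to any formula of $\D$ or $\I$.

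There is no genuine obstacle here, since the heavy lifting has already been done in Proposition \ref{expr} and the preceding theorem; the only point worth making explicit is that non-expressibility propagates downward along embeddings, which is immediate from the definition of embedding.
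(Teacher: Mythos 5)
Your proposal is correct and matches the paper's (implicit) argument exactly: the corollary is stated without a separate proof precisely because it follows by composing the embeddings $\D\hookrightarrow\I$, $\I\hookrightarrow\dlang$, $\dlang\hookrightarrow\mathcal{L}_{\ind}$ with the separating formula $\neg\cdep p$ from the preceding theorem, and non-expressibility indeed propagates downward along these embeddings as you note. Nothing is missing.
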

In summary, we have shown that 
$\mathcal{D}<\mathcal{I}<\mathcal{L}_{\dep}\equiv\mathcal{L}_{\ind}$,
where $<$ denotes strict containment and $\equiv$ equi-expressivity.
We have also observed that $\mathcal{L}_{\dep}$ and $\mathcal{L}_{\ind}$
are expressively complete in the sense that they can define
exactly all classes of SD-models closed under finite
propositional equivalence. (See Definition \ref{finpropequivalence}
for the exact specification of finite propositional equivalence.)
\subsection{Regular logics and compositionality of translations}\label{compositionality}
Let $t^*$ denote the translation from $\I$ into
$\mathcal{L}_{\dep}$ we defined above.
Recall that in addition to $t^*$, we have also defined the 
translations $t:\mathit{FOR}(\mathcal{L}_{\dep})
\rightarrow\mathit{FOR}(\mathcal{L}_{\ind})$ and
$s:\mathit{FOR}(\mathcal{L}_{\ind})
\rightarrow\mathit{FOR}(\mathcal{L}_{\dep})$.
Furthermore, in the  proof of Proposition \ref{expr},
we described a translation from $\D$ into $\I$;
let us denote that translation by $t'$.
In this section we will take a closer look at the four translations $t^*,t,s$ and $t'$.
We will establish that, in a certain sense, the translation $t^*$ is essentially
different from the other three.
We begin by defining a notion of a \emph{syntactically
regular logic} suitable for our
purposes. To this end, we first need some auxiliary definitions.
Let $\mathbb{N}^*$ be the set of all finite
sequences of numbers in $\mathbb{N}$ 
(including the empty sequence).
Let $C$ be a finite or countably infinite
set of \emph{operator symbols}. 
Let $d$ be a function that associates with each 
symbol in $C$ a nonempty subset of $\mathbb{N}^*$;
the set $d(c)$ is called the \emph{arity type set} of $c$.
For example, in $\mathcal{L}_{\ind}$, the operator $\ind$
always operates on three tuples of formulae, with the 
middle tuple being the only one allowed to be the empty tuple,
and thus the arity type set
associated with $\ind$ is $\mathbb{N}_{+}\times\, \mathbb{N}\, \times\, \mathbb{N}_{+}$. 
The set $C$ together with the function $d$ give rise to a
set $\mathit{FOR}(C,d)$ of formulae, which is defined to be
the smallest set $S$ such that the following conditions hold:
%
%
%
%
\begin{enumerate}
\item
If $p\in\mathit{PROP}$, then $p\in S$.
\item
If $c\in C$, $(n_1,\ldots,n_k) \in d(c)$, and $\varphi_{1,1},\ldots,\varphi_{1,n_1},\ldots, \varphi_{k,1},\ldots,\varphi_{k,n_k} \in S$, 
then
$$c\bigl( (\varphi_{1,1},\ldots,\varphi_{1,n_1}),\ldots,
(\varphi_{k,1},\ldots,\varphi_{k,n_k})\bigr)\ \in\ S.$$
\end{enumerate}
%

%
%

%
We call $\mathit{FOR}(C,d)$ the \emph{syntactically regular set of formulae}
defined by $C$ and $d$. 
We call a logic \emph{syntactically regular} if the set of formulae of
the logic is a syntactically regular set of
formulae for some set $C$ and a related function $d$.
Any logic whose set of formulae is a 
\emph{subformula closed subset} of some
syntactically regular set of formulae, is
called a \emph{syntactically subregular} logic.
Closure of a formula set $F$ under subformulae 
obviously means that if
$c\bigl( (\varphi_{1,1},\ldots,\varphi_{1,n_1}),\ldots,
(\varphi_{k,1},\ldots,\varphi_{k,n_k})\bigr)\ \in\ F,$
then each of the formulae $\varphi_{i,j}$ is in $F$.
For the sake of simplicity, we will below mainly talk about
regular and subregular (rather
than syntactically regular and syntactically subregular) logics.
It is easy to see that $\mathcal{L}_{\dep}$ and $\mathcal{L}_{\ind}$ 
are essentially  regular logics.
Similarly, $\D$
and $\I$ are essentially subregular logics.\footnote{We
acknowledge that strictly speaking $\mathcal{L}_{\dep}$ and $\mathcal{L}_{\ind}$
are not syntactically regular because---to give one reason---the
connective $\rightarrow$ uses infix rather than prefix notation. However, it
would be a trivial exercise to redefine the syntax of these logics in
the required way, and therefore we consider them syntactically regular.
Similarly, $\mathcal{D}$ and $\mathcal{I}$ are considered
syntactically subregular.}
Let $L$ be a subregular logic and $c$ an operator symbol of $L$.
Let $d$ be the function that associates the operators of $L$
with the related arity type set, and let $x\in d(c)$.
The pair $(c,x)$ is called a \emph{base operator} of $L$.
For example, the operator $\dep$ of $\mathcal{L}_{\dep}$
can act on tuples of formulae of all positive finite lengths,
so each pair $(\dep,i)$, where $i$ is a (singleton
tuple containing a) positive integer, is a base operator.
In contrast, the operators $\neg$ and $\rightarrow$ of $\mathcal{L}_{\dep}$
are both associated with only a single arity type,
and thus we can directly \emph{regard} $\neg$ and $\rightarrow$ \emph{as}
base operators.
We now define the notion of a \emph{compositional translation}
from one subregular logic to another. 
Intuitively, a compositional translation
from a logic $L$ to a logic $L'$ has the property that
\emph{each base operator} of $L$ is \emph{described} in $L'$ in a
\emph{uniform way}. Therefore the translation in some sense
\emph{acts only on the base operators} of $L$ rather than
directly on individual  formulae.
Thus a compositional translation can be 
considered to be, in a sense, simple and direct.
For further discussion on compositional
translations, see \cite{janssen}.
%

%
%
Assume $\varphi_1,\dots,\varphi_k$, where $k\in\mathbb{N}$, are
distinct formulae of a subregular logic $L$.
Assume $\psi(\varphi_1,\dots,\varphi_k)$ is a formula of $L$ obtained
from $\varphi_1,\dots,\varphi_k$ by 
composing these formulae with some
collection of base operators.
Let $X_1,\dots,X_k$ be novel symbols.
Then $\psi(X_1,\dots,X_k)$ is called an
\emph{operator term} of $L$; the operator term is
obtained by replacing the \emph{original} ground instances of the
formulae $\varphi_1,\dots,\varphi_k$ in $\psi(\varphi_1,\dots,\varphi_k)$
by $X_1,\dots,X_k$,
respectively. (For example, if $\varphi_1 := p$ and $\varphi_2 := q$,
and if $\psi(\varphi_1,\varphi_2) := (p\rightarrow q)\vee p\ind q$, then we
obtain the operator term $(X_1\rightarrow X_2)\vee X_1\ind X_2$ by
replacing $\varphi_1$ with $X_1$ and $\varphi_2$ with $X_2$.)
%
%

%
Let $L$ and $L'$ be subregular logics.
We identify $L$ and $L'$ with their respective sets of formulae.  
A translation $T:L\rightarrow L'$ is \emph{compositional}, if
for each base operator $(c,(n_1,\ldots,n_k))$ of $L$,
there exists an operator term
$$\psi(X_{1,1},\ldots,X_{1,n_1},\ldots,
X_{k,1},\ldots,
X_{k,n_k})$$
of $L'$ such that for all tuples
$(\varphi_{1,1},\ldots,\varphi_{1,n_1}),\ldots,(\varphi_{k,1},\ldots,
\varphi_{k,n_k})$
of formulae of $L$ such
that $c\bigl((\varphi_{1,1},\ldots,\varphi_{1,n_1}),\ldots,(\varphi_{k,1},\ldots,
\varphi_{k,n_k})\bigr)\in L$, we have 
\begin{multline*}
T\bigl(c\bigl((\varphi_{1,1},\ldots,\varphi_{1,n_1}),\ldots,(\varphi_{k,1},\ldots,
\varphi_{k,n_k})\bigr)\bigr)\\
:=\ \psi(\, T(\varphi_{1,1}),\ldots,T(\varphi_{1,n_1}),
\ldots,T(\varphi_{k,1}),\ldots,
T(\varphi_{k,n_k})\, \bigr),
\end{multline*}
i.e., the translated formula is obtained by substituting each
symbol $X_{i,j}$ in $\psi$ by $T(\varphi_{i,j})$.
%
%
%
%
Furthermore, it is required that for each proposition symbol $p$ in
the syntax of $L$, the translation $T(p)$ contains no other proposition
symbols except for $p$.
This ensures that the translation $T(\varphi)$ of
any formula $\varphi$ contains no other proposition symbols except 
for those in $\varphi$ itself.
This is a natural requirement and
can be essential for example when considering
SD-models with a finite propositional signature, i.e,
models that interpret only a finite number of proposition symbols.
We note that in the
team semantics literature, SD-models are in most cases indeed defined to
interpret only finitely many proposition symbols.
The symbols $X_{i,j}$ in the above definition should be regarded as 
\emph{placeholders} in the operator term $\psi(X_{1,1},\ldots, X_{k,n_k})$.
Intuitively, the operator term $\psi(X_{1,1},\ldots, X_{k,n_k})$
provides a ``uniform description" of
the base operator $(c,(n_1,\ldots,n_k))$ of $L$ in $L'$.
The  above definition of a compositional translation is
suitable for the purposes of the current paper and follows
standard principles of compositional translations.
Note that our translations 
$t:\mathit{FOR}(\mathcal{L}_{\dep})\rightarrow
\mathit{FOR}(\mathcal{L}_{\ind})$ and 
$s:\mathit{FOR}(\mathcal{L}_{\ind})\rightarrow
\mathit{FOR}(\mathcal{L}_{\dep})$
are indeed compositional, as is the
translation $t':\mathit{FOR}(\D)\rightarrow\mathit{FOR}(\I)$
from the proof of Proposition \ref{expr}. However, the
translation $t^*:\mathit{FOR}({\I})
\rightarrow\mathit{FOR}(\mathcal{L}_{\dep})$ is \emph{not}
compositional, despite being relatively simple.
In fact, it will turn out that a sound compositional translation from $\I$
into $\mathcal{L}_{\dep}$ or $\mathcal{L}_{\ind}$ is not possible.
To see this, it is sufficient (due to the existence of
the translations $t'$ and $s$) to show that $\D$
does not translate compositionally into $\mathcal{L}_{\dep}$.
The following theorem does exactly that.
%
%
%
%

%
\begin{theorem}
There exists no compositional translation $T$ from $\D$ into
$\mathcal{L}_{\dep}$ which is sound with respect to SD-models in
the sense that for any SD-model $W$ and any formula $\varphi$ of\, $\D$,
we have $W\Vdash\varphi$ iff $W\models T(\varphi)$.
\end{theorem}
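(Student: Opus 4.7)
The plan is a proof by contradiction. Suppose $T:\D \to \dlang$ is a sound compositional translation. Two operator terms will be central: the operator term $\psi_{\vee}(X_1, X_2)$ associated with the binary base operator $\vee$, and the operator term $\psi_{\cdep}(X)$ associated with the operator $\cdep$ (the $\dep$-operator with no antecedents, applied to a single propositional symbol). By the definition of compositional translations, both $\psi_{\vee}$ and $\psi_{\cdep}$ are $\dlang$-formulas built solely from their placeholders using $\dlang$-operators, with no proposition symbols appearing. Define $\chi(X) := \psi_{\vee}(X, X)$, a single-placeholder operator term.

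The core technical ingredient is a \emph{two-bit classification lemma}: for any $\dlang$-operator term $\alpha(X)$ in a single placeholder $X$ and no other proposition symbols, the truth value of $\alpha(X)$ at any pointed SD-model $(W, w)$ with $W \neq \emptyset$ is determined by the pair $(b, c) \in \{0, 1\}^2$, where $b$ records whether $W, w \models X$ and $c$ records whether $X$ is constant in $W$. The lemma is proved by induction on $\alpha(X)$; the only delicate case is $\dep(\alpha_1, \ldots, \alpha_k; \alpha_{k+1})$, but by the inductive hypothesis each subformula's truth at $(W, u)$ depends only on $b_u := [W, u \models X]$ once $c$ is fixed, so the functional-dependency condition collapses to a trivially-true statement when $c = 1$ and to a Boolean condition on the two possible $b$-values when $c = 0$.

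Applying the lemma to $T(p)$---which is a $\dlang$-formula involving only $p$ by the compositional requirement---identifies $T(p)$ with a Boolean function on pairs $(w(p), [p \text{ constant in } W])$. Soundness forces $T(p)(1, 1) = 1$, $T(p)(0, 1) = 0$, and excludes the case $T(p)(0, 0) = T(p)(1, 0) = 1$, leaving exactly three options. I then introduce three small SD-models: $W_+ = \{w_+\}$ with $w_+(p) = 1$, $W_- = \{w_-\}$ with $w_-(p) = 0$, and $W_{\pm} = \{w_+, w_-\}$. Crucially, $\cdep p \vee \cdep p$ is a $\D$-validity (split any $W$ into its two $p$-constant halves), so soundness demands that $\chi(T(\cdep p)) = T(\cdep p \vee \cdep p)$ be valid in every SD-model, whereas $\chi(T(p)) = T(p \vee p)$ must be valid only in those $W$ with $W \Vdash_{\D} p$.

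A case analysis on the three options for $T(p)$ now completes the argument. If $T(p)(0, 0) = T(p)(1, 0) = 0$, then $T(p)$ has the same two-bit type $(0, 1)$ in $W_-$ and at every world of $W_{\pm}$; applying the classification lemma to $\psi_{\cdep}$ as well, $T(\cdep p) = \psi_{\cdep}(T(p))$ takes the same value $\psi_{\cdep}(0, 1)$ in $W_-$ and throughout $W_{\pm}$, contradicting the soundness requirements $W_- \models T(\cdep p)$ and $W_{\pm} \not\models T(\cdep p)$. In either of the other two options $T(p)$ is mixed in $W_{\pm}$; soundness of $\chi(T(p))$ at $W_-$ pins $\chi(0, 1) = 0$, and then a subcase split on whether $T(\cdep p)$ is constant or mixed across $W_{\pm}$ yields either $\chi(0, 1) = 1$ (from the required validity of $\chi(T(\cdep p))$) or the simultaneous conditions $\chi(0, 0) = \chi(1, 0) = 1$ together with $\chi(0, 0) = 0$ or $\chi(1, 0) = 0$---contradictions in every case. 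The main obstacle is the clean verification of the two-bit classification lemma, particularly the dependence clause with arbitrarily many antecedents; once that lemma is in hand, the case analysis is a short finite check.
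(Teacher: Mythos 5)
Your proof is correct, but it takes a genuinely different route from the paper's. The paper works over two proposition symbols: it builds two two-world models $U,V$ (differing only in whether $q$ correlates positively or negatively with $p$), uses a symmetry argument plus the fact that substitution into an operator term preserves local equivalence within a fixed model to show that $T(q)$ and $T(\cdep p)$ each hold at exactly one point of each model, concludes that $T(\cdep p)$ and $T(q)$ are locally equivalent in one of $U,V$, and then transfers the $\D$-validity of $\cdep p\vee\cdep p$ to the non-validity $q\vee q$. You instead prove a structural lemma about $\mathcal{L}_{\dep}$: any single-placeholder operator term's truth at $(W,w)$ is a fixed Boolean function of the pair (truth of the substituted formula at $w$, constancy of that formula in $W$) --- the $\dep$-clause indeed collapses as you say, since for fixed constancy bit the antecedent condition depends only on which $b$-values are realized --- and then you run an exhaustive finite case analysis over the three one-symbol models $W_+,W_-,W_\pm$. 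Both arguments pivot on the same semantic fact, namely that $\cdep p\vee\cdep p$ is $\D$-valid while $p\vee p$ (resp.\ $q\vee q$) is not. What your approach buys is a self-contained, mechanical verification needing only one proposition symbol, plus a reusable sharpening of Lemma \ref{blocklemma} for contexts over a single atom; what the paper's approach buys is that it never needs to analyse the internal structure of $\mathcal{L}_{\dep}$-formulae at all, only the local-equivalence-preservation property of substitution. One point you should make explicit (both proofs need it, and it is how the definition in Section \ref{compositionality} is intended): operator terms are built from the placeholders by base operators alone and so contain no proposition symbols of their own, which is what licenses your two-bit lemma and the constraint that $T(p)$ mentions only $p$.
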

\begin{proof}
Let $S$ be an SD-model and $\chi_1,\chi_2$ formulae of $\mathcal{L}_{\dep}$.
We say that $\chi_1$ and $\chi_2$
are \emph{locally equivalent in $S$}, if for all $w\in S$, it holds
that $S,w\models\chi_1$ iff $S,w\models\chi_2$.
Suppose, for the sake of contradiction, that a
compositional translation $T$ from $\D$ into $\mathcal{L}_{\dep}$ exists.
Consider an SD-model $V$ consisting of exactly
two assignments, 
one satisfying $p$ and the other one not.
Then fix a proposition symbol $q$ so that
for each $w\in V$, the assignment $w$ satisfies $q$ if and only if $w$ does not satisfy $p$.
For all other proposition symbols $r\in\mathit{PROP}\setminus\{p,q\}$, we assume that
neither of the worlds in $V$ satisfies $r$.
We then define an SD-model $U$ which is the same as $V$ but with
the interpretation of $q$ redefined so that $q$ is
satisfied by $u\in U$ if and only if $u$ satisfies $p$.
Let $W\in\{U,V\}$.
We will prove the following claims about $W$.
\begin{enumerate}
\item
Let $r\in\mathit{PROP}\setminus\{p,q\}$.
The formula $T(r)$ is not satisfied by either of the points of $W$.
\item
$T(q)$ is satisfied by exactly one point of $W$.
\item
$T(\cdep p)$ is satisfied by exactly one point of $W$.
\end{enumerate}
We begin with the first claim. If $T(r)$ was
satisfied by \emph{both} points of $W$, 
we would have $W\models T(r)$ and thus $W\Vdash r$,
which is a contradiction.
If $T(r)$ was satisfied by exactly one point of $W$,
we would obtain a contradiction due to the fact that $T(r)$ is required 
by definition to contain no other proposition symbols except for $r$,
and since $w(r) = 0$ for both points $w\in W$, the 
situation where only one of the points in $W$ satisfies $T(r)$ is impossible by symmetry.
Therefore the first claim holds.

Concerning the second claim, we first observe that $T(q)$
cannot be satisfied by both points of $W$, because if it was,
then we would have $W\models T(q)$ and thus $W\Vdash q$,
which  is a contradiction. Now assume that neither of the points of $W$
satisfies $T(q)$. Then, using the first claim established above (claim 1.), the
formulae $T(q)$ and $T(r)$, for $r\in\mathit{PROP}\setminus\{p,q\}$, are
locally equivalent in $W$.
Therefore we can now infer, by the
following argument, that the formulae $T(\cdep q)$ and $T(\cdep r)$
must also be locally equivalent in $W$.
Since the translation $T$ is compositional,
there exists an operator term $\psi(X)$ that describes
the translation of the operator $\cdep$,
and thus we have $T(\cdep q) = \psi(T(q))$
and $T(\cdep r) = \psi(T(r))$.
Since we know that $T(q)$ and $T(r)$ are 
locally equivalent in $W$, 
we immediately observe that $\psi(T(q))$
and $\psi(T(r))$ are also locally equivalent in $W$.
Thus $T(\cdep q)$ and $T(\cdep r)$
are locally equivalent in $W$.
Hence, as $W\not\Vdash\cdep q$ and thus
$W\not\models T(\cdep q)$, we infer that $W\not\models T(\cdep r)$.
Therefore $W\not\Vdash \cdep r$.
This is a contradiction, and thus the second claim holds.
Concerning the third claim, assume first that $T(\cdep p)$ is
satisfied by both points of $W$. Then $W\models T(\cdep p)$,
whence $W\Vdash \cdep p$, which is a contradiction.
Assume then that neither of the points in $W$ satisfies $T(\cdep p)$.
Therefore, using the
first claim (claim 1. above), $T(\cdep p)$ and $T(r)$ are locally equivalent in $W$.
As $W\Vdash \cdep p \vee \cdep p$, we have
$W\models T(\cdep p \vee \cdep p)$ and thus
$W\models \psi'(T(\cdep p),T(\cdep p))$,
where $\psi'(X,Y)$ is the operator term
for $\vee$ which demonstrates
that $T$ is indeed a compositional translation.
Since $T(\cdep p)$ and $T(r)$ are locally equivalent in $W$, we
infer that $W\models \psi'(T(r),T(r))$.
Thus $W\Vdash r\vee r$, which is a contradiction.
Therefore we conclude that $T(\cdep p)$ is satisfied by 
exactly one point of the model $W$, and hence the third claim holds.
We have now proved each of the above three claims.
By the last two of the three claims,
recalling that $T(\cdep p)$ can only use the
proposition symbol $p$ and $T(q)$ the symbol $q$, we
now observe that exactly one of the following
conditions hold.
\begin{enumerate}
\item
$T(\cdep p)$ is locally equivalent to $T(q)$ in $U$.
\item
$T(\cdep p)$ is locally equivalent to $T(q)$ in $V$.
\end{enumerate}
We first assume 
that the first one of these conditions holds.
Therefore, since $U\Vdash\cdep p\vee\cdep p$
and thus $U \models T(\cdep p\vee \cdep p)$, we may now conclude
that $U\models T( q\vee q)$ as follows.

We know that $T(Cp)$ and $T(q)$ are satisfied by exactly the
same single point in $U$. We know also
that $U \models  T(Cp \vee Cp)$.
Thus $U\models \psi'(T(Cp),T(Cp))$,
where $\psi'(X,Y)$ is the operator term for $\vee$.
Since $T(Cp)$ and $T(q)$ are locally 
equivalent in $U$, we therefore
have $U \models \psi'(T(q),T(q))$. Since 
$\psi'(T(q),T(q)) = T( q \vee q )$, we have $U\models T( q \vee q )$.
Since $U\models T( q \vee q )$, we have $U\Vdash q\vee q$,
which is a contradiction. Thus we turn to the case where $T(\cdep p)$ and $T(q)$
are locally equivalent  in $V$. Similarly to the above,
since $V\Vdash \cdep p\vee \cdep p$ and
thus $V\models T(\cdep p\vee \cdep p)$, we 
infer using the formula $\psi'(X,Y)$ that $V\models T(q\vee q)$.
Therefore $V\Vdash q\vee q$, which is a contradiction.
\end{proof}

{
}
We finish this section by mentioning some relevant related results in
the literature on compositional translations and
uniform definability of operators. Section 3.5 of \cite{ciardellimasters}
establishes that in the propositional inquisitive logic $\mathrm{InqL}$, which is a
team-based logic equi-expressive with $\mathcal{D}$, none of the primitive
operators is definable in terms of the others. In \cite{fanyangten}, it is
shown that the implication and disjunction connectives of $\mathrm{InqL}$ are not
uniformly definable in $\mathcal{D}$, and thus no compositional translation
from $\mathrm{InqL}$ into $\mathcal{D}$ is possible.
In \cite{pietroagainwhocares}, it is shown that the so-called
weak universal quantifier $\forall^{1}$ is not uniformly definable in
first-order dependence logic.
%

\section{Validities and axiomatizations}\label{validities}

In this section we provide sound and complete 
axiomatizations for $\mathcal{L}_{\dep}$ and $\mathcal{L}_{\ind}$.
We begin by axiomatizing $\mathcal{L}_{\cdep}$, the
fragment of $\mathcal{L}_{\dep}$ with only
operators $\cdep$ instead of general  
determinacy operators $\dep$.
%

%
\subsection{Capturing the validities of $\clang$}

Recall that $\cdep\varphi$ stands for $\dep(\epsilon;\varphi)$, where $\epsilon$ is the empty sequence of formulae.
Recall also the abbreviation $\ubox\varphi :=
\varphi\wedge\cdep\varphi$ and
the equivalence $\cdep \psi \equiv \ubox \psi \lor   \ubox \lnot \psi$
that intuitively demonstrate that the universal modality $\Ubox$ and $\cdep$
are expressible in terms of each other.
%

%
%
%

We denote the fragment of $\dlang$ that extends propositional logic PL with $\cdep$ by $\clang$. 
The operator $\cdep$ has been studied previously and in a more general setting as a ``non-contingency'' operator, and also---in epistemic logic---as a
``knowing whether" operator, see \cite{FanWD15} and the references therein.
%
%

%

We next present a sound and complete axiomatic system  \axcdep
that captures the validities of \clang.
Several proofs of completeness of equivalent
axiomatizations have already been provided in the literature, starting with
\cite{montgomery} and considered again in, e.g., \cite{DBLP:journals/sLogica/Demri97}.
For historical references and recent
related work on axiomatizations of \cdep,
see the above mentioned reference \cite{FanWD15}. 
Nevertheless, we will present here yet another, simple and intuitive axiomatization with a
purely syntactic proof of completeness by means of
reduction to the completeness of S5 (or \ulang).
We will then use the  completeness of \axcdep to
obtain complete axiomatizations for \dlang and \indlang.
The axiomatic system \axcdep is defined as follows.

\medskip

Axiom schemes: 
\begin{enumerate}
\item[Ax0(\cdep)]  A complete set of axioms for $\mathrm{PL}$
%
\item[Ax1(\cdep)] $\cdep\, \top $ 

\item[Ax2(\cdep)] $\cdep\fo \leftrightarrow \cdep\lnot\fo$ 

\item[Ax3(\cdep)] $\cdep(\fo \land \cdep \fo)$ 

\item[Ax4(\cdep)] $\cdep\varphi \wedge \cdep\psi\ \, \rightarrow\ \,
\cdep(\varphi\wedge\psi)$

\item[Ax5(\cdep)] 
$\varphi\wedge \cdep\varphi \wedge \cdep(\varphi\rightarrow\psi)\ \, \rightarrow\ \, 
 \cdep\psi$
\end{enumerate}

Inference rules: 
\begin{enumerate}
\item[Rul0(\cdep)] Modus Ponens

\item[Rul1(\cdep)]
EQ$_{\cdep}$:
If $\vdash \fo \leftrightarrow \fob$ then 
$\vdash \cdep \fo \leftrightarrow \cdep \fob$.
\end{enumerate}
We will denote derivability in  \axcdep by $\vdash_{\cdep}$.

The axiomatic system above is not minimal. 
For instance, Ax4(C) can be left out\footnote{Thanks to Jie Fan for noting that.}, as it is derivable (though, not quite trivially) from the others. Nevertheless, 
rather than providing a derivation of the axiom (which 
would not be in the focus of this paper), we prefer to keep it in the system.

\begin{proposition}\label{necforcproposition}
\label{cor:NEC}
The following inference rule, which preserves SD-validity,
can be used in \axcdep: 

\medskip

NEC$_{\cdep}$: If $\vdash_{\cdep} \fo$ then $\vdash_{\cdep} \cdep\fo$.
\end{proposition}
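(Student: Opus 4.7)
The plan is to give a short syntactic derivation that mimics the standard argument for necessitation in modal logics axiomatised via an equivalence rule. The key idea is that any theorem of \axcdep is \axcdep-provably equivalent to $\top$, and then the rule \textup{EQ}$_{\cdep}$ transports that equivalence inside the $\cdep$ operator, where it meets axiom \textup{Ax1}(\cdep).

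In detail, I would proceed as follows. Suppose $\vdash_{\cdep}\fo$. Using Ax0(\cdep) (a complete set of $\mathrm{PL}$-axioms) together with Modus Ponens, one derives $\vdash_{\cdep}\fo \leftrightarrow \top$; this is pure propositional reasoning from the assumed theorem $\fo$ and the propositional validity $\top \leftrightarrow (\fo \to \fo)$ (or any other standard presentation of $\top$). Applying the rule \textup{EQ}$_{\cdep}$ to $\fo \leftrightarrow \top$ yields $\vdash_{\cdep}\cdep\fo \leftrightarrow \cdep\top$. Since Ax1(\cdep) gives $\vdash_{\cdep}\cdep\top$, one application of Modus Ponens (with the right-to-left direction of the biconditional extracted by propositional reasoning from Ax0(\cdep)) produces $\vdash_{\cdep}\cdep\fo$, which is what was required.

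For the claim about SD-validity, the argument is equally brief: if $\fo$ is SD-valid, then for every SD-model $W$ and every $u\in W$ we have $W,u\models\fo$. Hence the truth value of $\fo$ is constantly $1$ on $W$, so for any $w\in W$ the semantic clause for $\cdep$ gives $W,w\models\cdep\fo$, showing that $\cdep\fo$ is SD-valid as well.

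I do not anticipate any real obstacle. The only subtlety worth double-checking is that Ax0(\cdep) is explicitly a \emph{complete} set of axioms for $\mathrm{PL}$, which is essential to justify the step $\vdash_{\cdep}\fo \Rightarrow \vdash_{\cdep}\fo\leftrightarrow\top$; this is exactly why the rule takes the clean form above and does not need any extra modal machinery beyond \textup{EQ}$_{\cdep}$ and \textup{Ax1}(\cdep).
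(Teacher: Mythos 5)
Your derivation is exactly the paper's own proof: from $\vdash_{\cdep}\fo$ obtain $\vdash_{\cdep}\fo\leftrightarrow\top$ by PL, apply EQ$_{\cdep}$ to get $\vdash_{\cdep}\cdep\fo\leftrightarrow\cdep\top$, and conclude with Ax1(\cdep) and PL; the additional semantic check of SD-validity preservation is a correct (and harmless) supplement the paper leaves implicit. No gaps.
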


\begin{proof}
If $\vdash_{\cdep} \fo$, then $\vdash_{\cdep} \fo \leftrightarrow \top$ by PL
(propositional logic).
Thus $\vdash_{\cdep} \cdep \fo \leftrightarrow \cdep\, \top$ by EQ$_{\cdep}$, whence 
$\vdash_{\cdep} \cdep \fo$ by using Ax1(\cdep) and PL.  
\end{proof}

%
%

%
%
%
%

Now, recall the following well-known complete S5 axiomatization for 
 \ulang.  

\medskip

Axiom schemata: 
\begin{enumerate}
\item[Ax0(\Uman)] A complete set of axioms for $\mathrm{PL}$. 
%
\item[Ax1(U)]
$\Ubox(\varphi\ \rightarrow\ \psi)\ \rightarrow\ (\Ubox\varphi\
\rightarrow\ \Ubox\psi)$
\item[Ax2(\Uman)]
$\Ubox\varphi\ \rightarrow\ \varphi$
\item[Ax3(\Uman)] 
$\Udiam\varphi\ \rightarrow\ \Ubox\Udiam\varphi$
\end{enumerate}

Inference rules: 
\begin{enumerate}
\item[Rul0(\Uman)] 
Modus Ponens
\item[Rul1(\Uman)]
NEC$_{\mathrm{U}}$: If $\vdash \fo$ then $\vdash \Ubox\fo$
\end{enumerate}
We will denote derivability in $\mathcal{L}_{\mathcal{U}}$ by $\vdash_{S5}$. 

To show completeness of \axcdep, we first extend the
intuitive interdefinability of $\cdep$ and 
the universal modality to a translation $\varphi\mapsto\varphi^+$ from $\clang$
into $\ulang$ and a translation $\varphi\mapsto\varphi^{\circ}$ from $\ulang$  
into $\clang$.


%
The translation $\varphi\mapsto\varphi^+$ from \clang
into  $\mathcal{L}_{\mathcal{U}}$ is defined as follows.
\begin{enumerate}
\item
$p^+ = p$
\item
$(\neg\varphi)^+ = \neg\varphi^+$
\item
$(\varphi\rightarrow\psi)^+ =  (\varphi^+\rightarrow\psi^+)$
\item
$(\cdep\varphi)^+ = \Ubox\,\varphi^+\, \vee\, \Ubox\,\neg\varphi^+$
\end{enumerate}

The translation $\varphi\mapsto\varphi^\circ$
from $\mathcal{L}_{\mathcal{U}}$ into  \clang goes as follows.
\begin{enumerate}
\item
$p^{\circ} = p$
\item
$(\neg\varphi)^{\circ} = \neg\varphi^{\circ}$
\item
$(\varphi\rightarrow\psi)^{\circ} =  (\varphi^{\circ}\rightarrow\psi^{\circ})$
\item
$(\Ubox\varphi)^{\circ} = (\varphi^{\circ}\, \wedge\, \cdep\, \varphi^{\circ})$ 
%
\end{enumerate}
\begin{lemma}
\label{lem:plus}
For every formula $\varphi$ of $\clang$, $\models  \varphi$ iff 
$\models  \varphi^+$,
where the validity statement in each case refers to the semantics of
the language in question.
Moreover, the translation $()^+$ preserves, both ways, truth in states and therefore validity in models. 
\end{lemma}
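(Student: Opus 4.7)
The plan is to prove the stronger pointwise statement first: for every formula $\varphi \in \clang$, every SD-model $W$, and every $w \in W$, we have $W, w \models_{\cdep} \varphi$ iff $W, w \models_{\mathcal{U}} \varphi^+$. Once this local equivalence is established, the claim about validity is immediate: by the definition preceding Lemma \ref{lem:plus} (adapted to $\clang$ just as in Definition \ref{validityetc.}), $\models \varphi$ holds iff $W, w \models_{\cdep} \varphi$ for every $W$ and every $w \in W$, and likewise for $\varphi^+$ in $\ulang$. Preservation of validity in a model follows similarly by restricting $W$ to a single fixed model.

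I would proceed by straightforward induction on the structure of $\varphi$. The base case $\varphi = p$ is immediate because the clause for proposition symbols in $\clang$ is identical to that in $\ulang$, and $p^+ = p$. The Boolean cases $\varphi = \neg \psi$ and $\varphi = \psi \rightarrow \chi$ reduce directly to the inductive hypothesis, because both logics interpret $\neg$ and $\rightarrow$ by the same classical Kripke-style clauses, and the translation commutes with these connectives by definition.

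The only substantive case is $\varphi = \cdep \psi$. Here I would use the reformulation of the semantics of $\cdep$ given in the paper (just after Proposition \ref{prop:Dep}): $W, w \models_{\cdep} \cdep \psi$ iff the truth value of $\psi$ in $\clang$ is constant across $W$, i.e., either every $u \in W$ satisfies $\psi$ or no $u \in W$ satisfies $\psi$. Applying the inductive hypothesis to $\psi$, this is equivalent to: either $W, u \models_{\mathcal{U}} \psi^+$ for every $u \in W$, or $W, u \models_{\mathcal{U}} \neg \psi^+$ for every $u \in W$. By the semantic clause for $\Ubox$ in $\ulang$, this is precisely $W, w \models_{\mathcal{U}} \Ubox \psi^+ \vee \Ubox \neg \psi^+$, which is $W, w \models_{\mathcal{U}} (\cdep \psi)^+$ by definition of the translation.

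There is no genuine obstacle here; the lemma is essentially bookkeeping verifying that the recursive definition of $()^+$ matches the semantic clauses on both sides. The only minor care needed is to ensure the $\cdep$ clause is phrased precisely as the disjunction $\Ubox \psi^+ \vee \Ubox \neg \psi^+$ rather than its dual, but this is immediate from the observation that $W, u \models \psi \Leftrightarrow W, v \models \psi$ holding for all $u, v \in W$ is equivalent to the truth value of $\psi$ being uniformly $1$ or uniformly $0$ on $W$.
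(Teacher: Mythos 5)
Your proof is correct and is exactly the routine structural induction the paper has in mind; the paper itself omits the argument, stating only that ``the proof of the lemma is straightforward.'' The one point that needed care --- that the semantic clause for $\cdep\psi$ (constancy of the truth value of $\psi$ across $W$) matches the disjunction $\Ubox\psi^+\vee\Ubox\neg\psi^+$ rather than some dual form --- is handled properly in your treatment of the $\cdep$ case.
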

\noindent
The proof of the lemma is straightforward.

%
The composition of the two
translations, first $()^+$ and then $()^\circ$,
defines the following translation $\varphi\mapsto\varphi^*$ from $\clang$ into $\clang$: 
\begin{enumerate}
\item
$p^* = p$
\item
$(\neg\varphi)^* = \neg\varphi^*$
\item
$(\varphi\rightarrow\psi)^* =  (\varphi^*\rightarrow\psi^*)$
\item
$(\cdep\varphi)^* = (\varphi^*\wedge \cdep\varphi^*)\vee
(\neg \varphi^*\wedge \cdep\neg \varphi^*)$ 
\end{enumerate}
The following lemma shows that
we can derive equivalence of $\varphi$
and its translation $\varphi^*$ in \axcdep.
\begin{lemma}
\label{lem:star}
We have $\vdash_{\cdep} \varphi\leftrightarrow\varphi^*$
for every formula $\varphi \in \clang$.
\end{lemma}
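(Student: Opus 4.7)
The plan is to prove the lemma by structural induction on $\varphi$. The base case $\varphi = p$ is immediate since $p^* = p$, and the Boolean cases ($\neg\fo$, $\fo\rightarrow\fob$) follow directly from the inductive hypotheses together with propositional reasoning (Ax0($\cdep$)) since $()^*$ commutes with $\neg$ and $\rightarrow$.

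The only interesting case is $\varphi = \cdep\fo$. By the induction hypothesis, $\vdash_{\cdep} \fo \leftrightarrow \fo^*$, so rule EQ$_{\cdep}$ yields $\vdash_{\cdep} \cdep\fo \leftrightarrow \cdep\fo^*$. It thus suffices to establish
\[ \vdash_{\cdep} \cdep\fo^* \leftrightarrow \bigl((\fo^* \wedge \cdep\fo^*) \vee (\neg\fo^* \wedge \cdep\neg\fo^*)\bigr). \]
For this, I would first invoke Ax2($\cdep$), namely $\cdep\fo^* \leftrightarrow \cdep\neg\fo^*$, to rewrite the second disjunct on the right-hand side: by pure propositional reasoning, $(\neg\fo^* \wedge \cdep\neg\fo^*) \leftrightarrow (\neg\fo^* \wedge \cdep\fo^*)$. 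Hence the right-hand side is provably equivalent to $(\fo^* \wedge \cdep\fo^*) \vee (\neg\fo^* \wedge \cdep\fo^*)$, which by distributivity in PL is equivalent to $(\fo^* \vee \neg\fo^*) \wedge \cdep\fo^*$, and then to $\cdep\fo^*$ itself using the tautology $\fo^* \vee \neg\fo^*$.

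Chaining these equivalences through modus ponens and propositional reasoning in \axcdep completes the inductive step. No obstacle is expected here; the argument is essentially a two-line syntactic calculation once the induction is set up. The key ingredient is Ax2($\cdep$), which is precisely what allows the asymmetric-looking right-hand side of the $\cdep$ clause of $()^*$ to collapse to $\cdep\fo^*$. I would finally note that EQ$_{\cdep}$ (applied to the inductive hypothesis) is what transfers the equivalence from $\fo$ to $\cdep\fo$, so the proof uses only Ax0($\cdep$), Ax2($\cdep$), Rul0($\cdep$) and Rul1($\cdep$) beyond the induction hypothesis.
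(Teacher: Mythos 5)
Your proof is correct and follows essentially the same route as the paper's: structural induction, with the $\cdep$ case handled by applying EQ$_{\cdep}$ to the induction hypothesis and then reducing $(\cdep\fo)^*$ to $\cdep\fo^*$ via Ax2($\cdep$) and propositional reasoning. The paper merely traverses the same chain of equivalences in the opposite order (starting from the PL tautology $\cdep\fo\leftrightarrow((\fo\wedge\cdep\fo)\vee(\neg\fo\wedge\cdep\fo))$ and applying Ax2($\cdep$) last), so there is no substantive difference.
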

\begin{proof}
The proof proceeds by induction on the structure of $\varphi$.
The case for atoms is trivial and the cases for the Boolean
connectives follow easily from the induction hypothesis
using the fact that \axcdep is complete
with respect to PL. 
We thus discuss the case involving $\cdep\varphi$.
By the induction hypothesis, we
have $\vdash_{\cdep} \varphi\leftrightarrow\varphi^*$.
By the inference rule EQ$_{\cdep}$, we obtain  
\[\vdash_{\cdep} \cdep\varphi\leftrightarrow \cdep\varphi^*.\] 
On the other hand, by PL, we have
\[\vdash_{\cdep} \cdep\varphi\leftrightarrow\bigl
((\varphi\wedge \cdep\varphi)\vee (\neg\varphi\wedge \cdep\varphi)\bigr).\]
Then, again using PL together
with  the induction hypothesis and the equivalences above, we derive 
\[\vdash_{\cdep} \cdep\varphi\leftrightarrow\bigl
((\varphi^*\wedge \cdep\varphi^*)\vee (\neg\varphi^*\wedge \cdep\varphi^*)\bigr).\] 
Finally, using the axiom $\cdep\theta\leftrightarrow \cdep\neg\theta$ with $\theta := \varphi^*$ and PL, 
we get
\[\vdash_{\cdep} \cdep\varphi\leftrightarrow \bigl((\varphi^*\wedge \cdep\varphi^*)
\vee (\neg\varphi^*\wedge \cdep\neg\varphi^*)\bigr).\]
This concludes the proof.
\end{proof}
Next we will show that every
derivation in $\mathcal{L}_{\mathcal{U}}$ can be
simulated by a derivation in \axcdep.
\begin{lemma}
\label{lem:circ}
For every formula $\varphi \in \ulang$,  
if\, $\vdash_{\mathrm{S5}} \varphi$ then  $\vdash_{\cdep}\varphi^{\circ}$.
\end{lemma}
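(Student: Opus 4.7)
The plan is to prove this by induction on the length of an S5-derivation of $\varphi$. The key observation is that the translation $(\cdot)^{\circ}$ commutes with Boolean connectives, so propositional reasoning is preserved, and only the modal axioms and the necessitation rule require real work.

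First I would dispose of the easy parts. The PL axiom schemes, under $(\cdot)^{\circ}$, remain instances of propositional schemes over $\clang$, hence derivable from Ax0($\cdep$). Modus Ponens is handled because $(\varphi \to \psi)^{\circ} = \varphi^{\circ} \to \psi^{\circ}$, so an application of MP in S5 translates directly to an application of MP in \axcdep. For the necessitation rule NEC$_{\mathrm{U}}$, if the induction hypothesis gives $\vdash_{\cdep} \varphi^{\circ}$, then by Proposition \ref{necforcproposition} we get $\vdash_{\cdep} \cdep \varphi^{\circ}$, and combining the two by PL yields $\vdash_{\cdep} \varphi^{\circ} \wedge \cdep \varphi^{\circ}$, which is exactly $(\Ubox \varphi)^{\circ}$.

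The three modal axioms remain. For Ax2(U), the translation is $(\varphi^{\circ} \wedge \cdep \varphi^{\circ}) \to \varphi^{\circ}$, which is immediate by PL. For Ax1(U), the translation becomes
\[
\bigl((\varphi^{\circ} \to \psi^{\circ}) \wedge \cdep(\varphi^{\circ} \to \psi^{\circ})\bigr) \to \bigl((\varphi^{\circ} \wedge \cdep \varphi^{\circ}) \to (\psi^{\circ} \wedge \cdep \psi^{\circ})\bigr).
\]
Assuming the antecedent and $\varphi^{\circ} \wedge \cdep \varphi^{\circ}$, MP gives $\psi^{\circ}$; the remaining $\cdep \psi^{\circ}$ is precisely the conclusion of Ax5($\cdep$) applied to $\varphi^{\circ}$ and $\psi^{\circ}$, so the implication is derivable.

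The step I expect to require the most care is Ax3(U): $\Udiam \varphi \to \Ubox \Udiam \varphi$. Since $\Udiam \varphi = \neg \Ubox \neg \varphi$, writing $\theta := \neg \varphi^{\circ} \wedge \cdep \neg \varphi^{\circ}$, the translation is $\neg \theta \to (\neg \theta \wedge \cdep \neg \theta)$. The first conjunct on the right is trivial; for the second, by Ax2($\cdep$) it suffices to derive $\cdep \theta$, and this is exactly the instance $\cdep(\neg \varphi^{\circ} \wedge \cdep \neg \varphi^{\circ})$ of Ax3($\cdep$), which is derivable outright. Hence the whole implication is a theorem of \axcdep. With every axiom and inference rule accounted for, the induction closes and the lemma follows.
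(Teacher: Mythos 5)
Your proposal is correct and follows essentially the same route as the paper: induction on S5-derivations, with Ax2(U) translating to a PL tautology, Ax1(U) handled via Ax5($\cdep$) plus propositional reasoning, Ax3(U) via Ax3($\cdep$) and Ax2($\cdep$), and NEC$_{\mathrm{U}}$ via the derived rule NEC$_{\cdep}$ of Proposition \ref{necforcproposition}. The only cosmetic difference is that the paper writes out the PL manipulations for Ax1(U) as an explicit five-line derivation, whereas you argue them in deduction-theorem style; since Ax0($\cdep$) gives full propositional completeness, this is equivalent.
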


\begin{proof}
The proof proceeds by induction on derivations in $\mathcal{L}_{\mathcal{U}}$.
We will first prove that $\vdash_{\cdep} \varphi^{\circ}$ for each 
axiom $\varphi$ for $\mathcal{L}_{\mathcal{U}}$.
For propositional tautologies this is trivial. 
To deal with Ax1(\Uman), we must show that
\[\vdash_{\cdep}  ((\theta\rightarrow\psi ) \wedge \cdep(\theta\rightarrow\psi))\rightarrow
((\theta\wedge \cdep\theta)\rightarrow(\psi\wedge \cdep\psi))\]
for arbitrary $\theta$ and $\psi$. 
The following derivation does exactly this. 
(The steps after the first one use PL and the preceding steps.)
\begin{enumerate}
\item  
$\vdash_{\cdep} (\theta\wedge \cdep\theta\wedge \cdep(\theta\rightarrow\psi)) 
\rightarrow \cdep\psi$ 
\hspace{\fill} Ax5(\cdep) 

\item 
$\vdash_{\cdep}  (\theta\wedge\psi \wedge \cdep\theta \wedge \cdep(\theta\rightarrow\psi))
\rightarrow (\psi\wedge \cdep\psi)$ 
\hspace{\fill} 

\item 
$\vdash_{\cdep}  (\theta\wedge (\theta\rightarrow\psi ) \wedge \cdep\theta \wedge \cdep(\theta\rightarrow\psi))
\rightarrow (\psi\wedge \cdep\psi)$ 
\hspace{\fill} 

\item $\vdash_{\cdep}  ((\theta\rightarrow\psi ) \wedge \cdep(\theta\rightarrow\psi)
\wedge (\theta\wedge \cdep\theta))\rightarrow (\psi\wedge \cdep\psi)$ 
\hspace{\fill} 

\item 
$\vdash_{\cdep}  ((\theta\rightarrow\psi ) \wedge \cdep(\theta\rightarrow\psi))\rightarrow
((\theta\wedge \cdep\theta)\rightarrow(\psi\wedge \cdep\psi))$ 
\hspace{\fill}  
\end{enumerate}

To cover axiom Ax2(\Uman), we must show that $\vdash_{\cdep}(\theta\wedge \cdep\theta) \rightarrow \theta$,
which is a propositional tautology.

To deal with axiom Ax3(\Uman), we must show that
%
%
\[
\vdash_{\cdep}\neg(\neg \theta\wedge \cdep\neg \theta)\rightarrow
\bigl(\neg(\neg \theta\wedge \cdep\neg \theta)
\wedge \cdep\neg(\neg \theta\wedge \cdep\neg \theta)\bigr).
\]
%
%
Here is the derivation.
\begin{enumerate}
\item 
$\vdash_{\cdep} \neg(\neg \theta\wedge \cdep\neg \theta)\rightarrow
\neg(\neg \theta\wedge \cdep\neg \theta)$
\hspace{\fill} by PL

\item  
$\vdash_{\cdep} \cdep(\neg\theta\wedge\cdep\neg\theta)$
\hspace{\fill} Ax3(\cdep) 

\item  
$\vdash_{\cdep} \cdep\neg(\neg\theta\wedge\cdep\neg\theta)$
\hspace{\fill} by row 2, Ax2(\cdep) and PL

\item 
$\vdash_{\cdep} \neg(\neg \theta\wedge \cdep\neg \theta)\rightarrow
\bigl(\neg(\neg \theta\wedge \cdep\neg \theta)
\wedge \cdep\neg(\neg \theta\wedge \cdep\neg \theta)\bigr)$
\hspace{\fill} by 1, 3 and PL
\end{enumerate}
%

%
Now it remains to establish that NEC$_{\Uman}$ preserves the claim,
that is, we will show that if $\vdash_{\mathrm{S5}} \varphi$
and thus $\vdash_{\cdep}\varphi^{\circ}$ by the induction hypothesis,
then we also have $\vdash_{\cdep}(\Ubox\varphi)^{\circ}$.
Thus we assume that $\vdash_{\cdep}\varphi^{\circ}$.
Using $\mathrm{NEC}_\cdep$ (see Proposition \ref{necforcproposition}),
we infer that $\vdash_{\cdep} \cdep \varphi^{\circ}$, and
using PL, we get $\vdash_{\cdep} \varphi^{\circ}\wedge \cdep\varphi^{\circ}$.
As $(\Ubox\varphi)^{\circ} = \varphi^{\circ}
\wedge \cdep\varphi^{\circ}$, we are done.
\end{proof}

We are now ready to prove the  soundness and completeness of \axcdep.

\begin{proposition}
\label{prop:AxC}
The axiomatic system \axcdep is sound and complete for the validities of \clang.
\end{proposition}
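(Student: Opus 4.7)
The plan is to handle soundness and completeness separately, with completeness obtained by a purely proof-theoretic reduction to the S5-completeness of $\mathcal{L}_{\mathcal{U}}$ via the translations $()^+$ and $()^\circ$.

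For soundness, I would verify directly that each of Ax0(\cdep)--Ax5(\cdep) is SD-valid and that Modus Ponens and EQ$_\cdep$ preserve SD-validity. Most cases are routine given the semantic clause $W,w\models\cdep\varphi$ iff $\varphi$ has constant truth value across $W$. For Ax3(\cdep), the point is that the compound $\varphi\wedge\cdep\varphi$ is automatically constant in any model: it is true at $w\in W$ iff $\varphi$ is true throughout $W$, which is a property of $W$ alone. For Ax5(\cdep), if $\varphi$, $\cdep\varphi$ and $\cdep(\varphi\rightarrow\psi)$ all hold, then $\varphi$ is true everywhere in $W$ and $\varphi\rightarrow\psi$ has constant truth value, forcing $\psi$ to be constant as well. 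The rule EQ$_\cdep$ preserves validity because the semantics of $\cdep$ depends only on the extension of the argument formula in $W$.

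For completeness, I would argue as follows. Suppose $\models\varphi$ for some $\varphi\in\clang$. By Lemma \ref{lem:plus}, we have $\models\varphi^+$ in $\mathcal{L}_{\mathcal{U}}$. By the (well-known) completeness of the S5 axiomatization for $\mathcal{L}_{\mathcal{U}}$, we obtain $\vdash_{\mathrm{S5}}\varphi^+$. Applying Lemma \ref{lem:circ} to $\varphi^+$ then yields $\vdash_{\cdep}(\varphi^+)^\circ$. By inspecting the clauses defining $()^+$, $()^\circ$ and $()^*$ it is immediate that $(\varphi^+)^\circ = \varphi^*$ (the composition clause-by-clause recovers the defining clauses of $()^*$). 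Hence $\vdash_{\cdep}\varphi^*$. Finally, Lemma \ref{lem:star} gives $\vdash_{\cdep}\varphi\leftrightarrow\varphi^*$, and Modus Ponens delivers $\vdash_{\cdep}\varphi$, as required.

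The main obstacle has in fact already been overcome by Lemmas \ref{lem:plus}, \ref{lem:star} and \ref{lem:circ}; the proposition itself is essentially a bookkeeping step, composing these lemmas with S5-completeness. The only thing one must be slightly careful about is the syntactic identity $(\varphi^+)^\circ = \varphi^*$, but this is an easy induction on $\varphi$: the base and Boolean cases are immediate, while for $\cdep\varphi$ we compute $(\cdep\varphi)^+ = \Ubox\varphi^+\vee\Ubox\neg\varphi^+$, whose $()^\circ$-translation is $(\varphi^{+\circ}\wedge\cdep\varphi^{+\circ})\vee(\neg\varphi^{+\circ}\wedge\cdep\neg\varphi^{+\circ})$, which by the induction hypothesis equals $(\cdep\varphi)^*$.
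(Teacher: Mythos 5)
Your proposal is correct and follows essentially the same route as the paper: soundness by direct verification of the axioms and rules, and completeness by composing Lemma \ref{lem:plus}, the S5-completeness of $\mathcal{L}_{\mathcal{U}}$, Lemma \ref{lem:circ}, and Lemma \ref{lem:star}. Your explicit check that $(\varphi^+)^\circ = \varphi^*$ is a detail the paper leaves implicit, but it is the same argument.
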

\begin{proof}
The soundness follows by an easy verification of the validity of all the axioms and the
fact that EQ$_{\cdep}$ preserves validity.
To prove completeness, assume that $\models \fo$ for some
formula $\fo$ of $\clang$. Then $\models  \fo^+$ by Lemma \ref{lem:plus}.
By completeness of $\mathcal{L}_{\mathcal{U}}$, we have $\vdash_{S5} \fo^+$.
By Lemma \ref{lem:circ}, we
thus have $\vdash_{\cdep} (\fo^+)^\circ$, i.e., $\vdash_{\cdep} \fo^*$.
Therefore $\vdash_{\cdep} \fo$ by Lemma \ref{lem:star}.
\end{proof}
\subsection{Complete axiomatizations of $\dlang$ and $\indlang$}
We now define sound and complete axiomatic systems \axdep and \axindep
for \dlang and $\mathcal{L}_{\ind}$ by introducing 
new axiom schemata.
The axiomatic system \axdep for \dlang is obtained by extending \axcdep.
The idea is simply to define $\dep$ in
terms of $\cdep$. 
A suitable definition is
obtained from the equivalence established in Proposition \ref{prop:Dep}.
Recall that $\ubox\varphi$ is an abbreviation of $\varphi\wedge\cdep\varphi$.
We define, for each positive integer $k$,
the following axiom schema:
\[
Ax(\dep_k) \ \ \ \ \ \ \ \ \ \ \ 
\dep(\varphi_1,\ldots,\varphi_k,\psi)\ \ \leftrightarrow\
\bigvee\limits_{\chi\, \in\, \nff(\varphi_1,\ldots,\varphi_k)}
\ubox(\chi\leftrightarrow\psi).
\]
The system \axdep consists of the axiom schemata and rules of $\mathcal{L}_{\cdep}$
together with the above axiom schemata for each $k\in\mathbb{Z}_+$.
We obtain an axiomatic system \axindep for $\mathcal{L}_{\ind}$
similarly by essentially extending \axcdep by schemata that
define by Proposition \ref{prop:Ind} the operator $\ind$ in terms of $\cdep$.
The language $\mathcal{L}_{\ind}$ 
does not contain $\cdep$ as a 
primitive, but the translation $t$ given 
before Proposition \ref{expr} shows that the operator $\cdep$ can be
expressed as $\varphi\, \ind\, \varphi$. Thus we
first define $\mathit{AX}_0(\mathcal{L}_{\ind})$ to be the following
system\footnote{Provided here for readers' convenience.} obtained from $\axcdep$ by
the substitution $\cdep\theta\mapsto\theta\ind\theta$.
%
%
%
%

\medskip

Axiom schemes: 

%
\begin{enumerate}
\item[Ax0(\ind)] A complete set of axioms for $\mathrm{PL}$.
%
\item[Ax1(\ind)] $\top\, \ind\, \top$ 

\item[Ax2(\ind)] $\fo\ind\fo\, \leftrightarrow\,\neg\fo\ind\neg\fo$

\item[Ax3(\ind)] $(\fo \land \fo\ind\fo)\ind(\fo \land \fo\ind\fo)$ 

\item[Ax4(\ind)] $\varphi\ind\varphi \wedge \psi\ind\psi\ \, \rightarrow\ \,
(\varphi\wedge\psi)\ind(\varphi\wedge\psi)$

\item[Ax5(\ind)] 
$\varphi\wedge \varphi\ind\varphi \wedge
(\varphi\rightarrow\psi)\ind(\varphi
\rightarrow\psi)\ \, \rightarrow\ \, \psi\ind\psi$
\end{enumerate}
Inference rules: 
\begin{enumerate}
\item[Rul0(\cdep)] Modus Ponens

\item[Rul1(\ind)]
EQ$_{\ind}$:
If $\vdash \fo \leftrightarrow \fob$ then
$\vdash \fo\ind\fo \leftrightarrow \fob\ind\fob$.
\end{enumerate}
Recalling the abbreviation $\udiamp$ from
Section \ref{indsection}, we define \axindep to be
the extension of $\mathit{AX}_0(\mathcal{L}_{\ind})$ by the
following axiom schemata
for all $m\in\mathbb{N}$ and $k,n\in\mathbb{Z}_{+}$:
\begin{multline*}
Ax(\ind_{k,m,n}) \ \ \ \ \ \ 
(\fo_1,\ldots,\fo_k)\, \ind_{(\fod_1,\ldots,\fod_m)}(\fob_1,\ldots,\fob_n) 
\leftrightarrow \\
\bigwedge\limits_{
(\varphi,\theta,\psi)\, \in\, B}  
\Big(\bigl(\udiamp(\fod \wedge \fo)
\wedge\udiamp(\fod \wedge \fob)\bigr)
\rightarrow\ \udiamp(\fod \wedge \fo \wedge \fob) \Big),
\end{multline*}
where $B$ is as in Proposition \ref{prop:Ind}.
We denote derivability in  \axdep by $\vdash_{\dep}$ and 
derivability in  \axindep by $\vdash_{\ind}$.


\begin{theorem}
\label{prop:Compl-DI} 
~
\begin{enumerate}
\item  \axdep is sound and complete for the validities of \dlang.
\item  \axindep is sound and complete for the validities of \indlang.
\end{enumerate}
\end{theorem}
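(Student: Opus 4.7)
My strategy is to reduce both parts of the theorem to the already established completeness of $\axcdep$ (Proposition~\ref{prop:AxC}) by means of syntactic translations that eliminate the operators $\dep$ and $\ind$ in favour of $\cdep$ (respectively, in favour of the self-independence $\varphi\ind\varphi$, which plays the role of $\cdep\varphi$ inside $\mathit{AX}_0(\mathcal{L}_{\ind})$). Soundness in each case is essentially immediate: each instance of $Ax(\dep_k)$ is valid by Proposition~\ref{prop:Dep}, each instance of $Ax(\ind_{k,m,n})$ by Proposition~\ref{prop:Ind}, and the remaining axioms and rules coincide with those already known to be sound for $\clang$.

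For completeness of $\axdep$, I would define a translation $\sharp\colon \dfor \to \cfor$ by recursion on formulas, letting $\sharp$ commute with proposition symbols and Boolean connectives, and setting
\[
(\dep(\varphi_1,\ldots,\varphi_k;\psi))^\sharp := \bigvee_{\chi \in \nff(\varphi_1^\sharp,\ldots,\varphi_k^\sharp)} \ubox(\chi\leftrightarrow\psi^\sharp),
\]
which is readily verified to land in $\cfor$. By Proposition~\ref{prop:Dep} together with Lemma~\ref{lem:Dep-ER}, one shows semantically that $\varphi \equiv_{\dlang} \varphi^\sharp$, so any valid $\varphi$ has a valid $\clang$-translation. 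The key syntactic step is to prove $\vdash_{\dep}\varphi\leftrightarrow\varphi^\sharp$ by induction on $\varphi$: the only nontrivial case uses $Ax(\dep_k)$ to rewrite $\dep(\varphi_1,\ldots,\varphi_k;\psi)$ as $\bigvee_{\chi} \ubox(\chi\leftrightarrow\psi)$, after which the inductive equivalences $\varphi_i \leftrightarrow \varphi_i^\sharp$ and $\psi\leftrightarrow\psi^\sharp$ are propagated through each type $\chi$ (by PL-based substitution of equivalents in Boolean contexts) and through each $\ubox$ (by unfolding $\ubox\theta = \theta\wedge\cdep\theta$ and applying $\mathrm{EQ}_{\cdep}$ to the inner $\cdep$-occurrence). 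Completeness then follows from Proposition~\ref{prop:AxC}: if $\models\varphi$ then $\models\varphi^\sharp$, whence $\vdash_{\cdep}\varphi^\sharp$, so $\vdash_{\dep}\varphi^\sharp$ as $\axcdep\subseteq\axdep$, and finally $\vdash_{\dep}\varphi$ by the derived equivalence.

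The argument for $\axindep$ is analogous. Since $\mathit{AX}_0(\mathcal{L}_{\ind})$ arises from $\axcdep$ by the substitution $\cdep\theta \mapsto \theta\ind\theta$, it is sound and complete for the fragment of $\indlang$ in which $\ind$ occurs only as self-independence. I would then define a translation $\natural\colon \mathit{FOR}(\indlang)\to$ this fragment which eliminates the general $\ind$-expressions by means of the axioms $Ax(\ind_{k,m,n})$, establish semantic equivalence $\varphi\equiv\varphi^\natural$ via Proposition~\ref{prop:Ind} and Lemma~\ref{lem:Ind-ER}, and establish $\vdash_{\ind}\varphi\leftrightarrow\varphi^\natural$ by the same inductive strategy as in the $\dep$ case. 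The main obstacle in both parts is precisely this syntactic equivalent-replacement step: the primitive equivalence rules $\mathrm{EQ}_{\cdep}$ and $\mathrm{EQ}_{\ind}$ only act on the innermost occurrence of the relevant operator, so one must verify that provable equivalences do in fact propagate through arbitrary nested contexts. This requires a straightforward but slightly delicate subsidiary induction on formula structure, using the defining axioms at each step to reduce every non-Boolean operator back to $\cdep$ (respectively, to self-independence), at which point the primitive equivalence rule applies.
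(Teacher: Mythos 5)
Your proposal is correct and follows essentially the same route as the paper: soundness via Propositions \ref{prop:Dep} and \ref{prop:Ind}, and completeness by a translation into $\clang$ (respectively, into the self-independence fragment handled by $\mathit{AX}_0(\mathcal{L}_{\ind})$) that eliminates $\dep$ and $\ind$ via the axioms $Ax(\dep_k)$ and $Ax(\ind_{k,m,n})$, together with an induction establishing $\vdash \theta \leftrightarrow \theta^\sharp$ in which the inductive equivalences are propagated through Boolean contexts by PL and through $\cdep$ by $\mathrm{EQ}_{\cdep}$. The only cosmetic difference is that the paper keeps $\cdep$ (the $k=0$ case) untranslated rather than folding it into the general $\dep$-clause, which you would need to do as well since $Ax(\dep_k)$ is only postulated for $k \geq 1$.
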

\begin{proof}
The proofs of the two claims are very similar, so we will first present the
argument for \axdep and then briefly comment the
claim for \axindep.
Soundness follows from the soundness of \axcdep and Proposition \ref{prop:Dep}. 
To prove completeness, we will use a similar argument as the one
applied in the proof of
Proposition \ref{prop:AxC}.
We will reduce the completeness  of  \axdep to the
already proved completeness of \axcdep. 
We first define the obvious translation $tr$ of \dlang into \clang which leaves all atoms and
Boolean connectives intact
and likewise translates $\cdep\varphi$ to $\cdep\, tr(\varphi)$,
but treats formulae $\dep(\varphi_1,\ldots,\varphi_k;\psi)$ with $k\not=0$ as
follows. Using the equivalence 
established by Proposition \ref{prop:Dep}, we put 
\begin{multline*}
tr(\dep(\varphi_1,\ldots,\varphi_k;\psi)) :=\\  \bigvee\limits_{\chi\, \in\, 
\nff(\varphi_1,\ldots,\varphi_k)} 
\bigl(\, (tr(\chi) \leftrightarrow tr(\psi))\
\wedge\, \cdep(tr(\chi) \leftrightarrow tr(\psi))\, \bigr).
\end{multline*}
%

%

\vcut{
Now, we show by induction on the nesting depth of $\dep$ (or, number of occurrences of $\dep$) in formulae $\theta$ in \dlang that $\models \theta \leftrightarrow  tr_{\dep}(\theta)$ and therefore 
$\models \theta$ \  iff  $\models tr_{\dep}(\theta)$. The only non-trivial step is when $\theta = \dep(\varphi_1,\ldots,\varphi_k,\psi)$, assuming that the claim holds for all formulae of smaller nesting depth of $\dep$, in particular for $\psi$ and for every 
$\chi \in \nff(\varphi_1,\ldots,\varphi_k)$. Then, to prove $\models \dep(\varphi_1,\ldots,\varphi_k,\psi) \leftrightarrow tr_{\dep}(\dep(\varphi_1,\ldots,\varphi_k,\psi))$ we use the inductive hypothesis, 
Proposition \ref{prop:Dep}, the equivalent replacement property in PL and the (sound) semantic version of the rule EQ$_{\cdep}$, claiming that if $\models \fo \leftrightarrow \fob$ then 
$\models \cdep \fo \leftrightarrow \cdep\fob$. 
}

We then  prove by induction on the structure of 
formulae $\theta$ of \dlang that
$$\vdash_{\dep} \theta \leftrightarrow tr(\theta).$$
The cases for proposition symbols and Boolean
connectives are trivial.
To cover the case for $\cdep$,
assume we have shown that $\vdash_{\dep}\varphi \leftrightarrow tr(\varphi)$.
We then conclude that $\vdash_{\dep}\cdep\varphi
\leftrightarrow \cdep\, tr(\varphi)$
directly by the rule $\mathrm{EQ}_{\cdep}$.
To deal with the case for $\dep$,
let $\theta = \dep(\varphi_1,\ldots,\varphi_k;\psi)$,
and let the induction hypothesis be
that $\vdash_{\dep}\varphi_i\leftrightarrow tr(\varphi_i)$ for
each $i\leq k$ and $\vdash_{\dep}\psi\leftrightarrow tr(\psi)$.
From here it is easy to conclude, using
completeness with respect to propositional logic,
that we also have $\vdash_{\dep}\chi\leftrightarrow tr(\chi)$ for
each $\chi\in\mathit{DNF}(\varphi_1,...,\varphi_k)$.
Therefore, using PL, we have
$$\vdash_{\dep} (\chi\leftrightarrow\psi)
\leftrightarrow (tr(\chi)\leftrightarrow tr(\psi)),$$
whence we infer by the rule $\mathrm{EQ}_{\cdep}$ that we have
$$\vdash_{\dep} \cdep(\chi\leftrightarrow\psi)
\leftrightarrow \cdep(tr(\chi)\leftrightarrow tr(\psi)).$$
Using this equivalence and the already 
established fact that $\vdash_{\dep} \chi'\leftrightarrow tr(\chi')$
for all $\chi'\in\{\psi\}\cup\mathit{DNF}(\varphi_1,...,\varphi_k)$, we
then infer by PL that 
\begin{multline*}
\vdash_{\dep}\bigvee\limits_{\chi\, \in\, 
\nff(\varphi_1,\ldots,\varphi_k)} 
\bigl(\, (\chi \leftrightarrow \psi)\
\wedge\, \cdep(\chi \leftrightarrow \psi)\, \bigr)\\
\leftrightarrow\ \bigvee\limits_{\chi\, \in\, 
\nff(\varphi_1,\ldots,\varphi_k)} 
\bigl(\, (tr(\chi) \leftrightarrow tr(\psi))\
\wedge\, \cdep(tr(\chi) \leftrightarrow tr(\psi))\, \bigr).
\end{multline*}
From here we conclude,
using propositional logic and Ax($\dep_k$), that
\begin{multline*}
\vdash_{\dep}   \dep(\varphi_1,...,\varphi_k;\psi)\\
\leftrightarrow\ \bigvee\limits_{\chi\, \in\, 
\nff(\varphi_1,\ldots,\varphi_k)} 
\bigl(\, (tr(\chi) \leftrightarrow tr(\psi))\
\wedge\, \cdep(tr(\chi) \leftrightarrow tr(\psi))\, \bigr).
\end{multline*}
In other words, 
we have $\vdash_{\dep} \dep(\varphi_1,\ldots,\varphi_k,\psi)
\leftrightarrow tr(\dep(\varphi_1,\ldots,\varphi_k,\psi))$,
whence we have now established
that $\vdash_{\dep} \theta\leftrightarrow tr(\theta)$
for all $\theta$ of $\mathcal{L}_{\dep}$.
%
%
%
%

%
To conclude the proof,
assume that $\models \theta$ for some $\theta$ of $\dlang$.
Then $\models tr(\theta)$ by
soundness of the translation $tr$.
Hence, recalling that $tr(\theta)$ is a
formula of $\clang$, we
have $\vdash_{\cdep} tr(\theta)$ by completeness of \axcdep.
Using the fact that $\vdash_{\dep}
\theta \leftrightarrow tr(\theta)$, we
extend the derivation of
$tr(\theta)$ in \axcdep to a
derivation of $\theta$ in \axdep.
Therefore $\vdash_{\dep}\theta$.
The completeness proof of $\axindep$ is similar.
We first prove that $\mathit{AX}_{0}(\mathcal{L}_{\ind})$ is
complete for the notational variant of $\mathcal{L}_{\cdep}$
that replaces $\cdep\varphi$ with $f(\varphi) \ind f(\varphi)$,
where $f$ is a translation that
keeps proposition variables and Boolean connectives intact
but treats $\cdep$ as given here. This proof of 
completeness is virtually identical to the corresponding
argument for $\mathit{AX}(\mathcal{L}_{\cdep})$ given above.
Then the completeness of \axindep is proved 
similarly to the way \axdep was treated above,
the only significant (but uncomplicated) difference being that
the axioms $\mathit{Ax}(\ind_{k,m,n})$ instead of axioms $\mathit{Ax}(\dep_k)$
are used.
\end{proof}

%
%
Our axiomatizations for $\mathcal{L}_{\dep}$ and 
$\mathcal{L}_{\ind}$ are not finite because we have the
schemata $Ax(\dep_k)$ and $Ax(\ind_{k,m,n})$
for infinitely many values of $k,m,n$.\footnote{Indeed, bounding
these values by some small constant would result in  more
elegant axiomatizations that then would, however, only work for
bounded versions of $\dep$ and $\ind$ with bounded arities.}
We will next show that, in fact, neither $\mathcal{L}_{\dep}$
nor $\mathcal{L}_{\ind}$ \emph{has} a finite axiomatization.
To this end, we will first define formally what we mean by a
finite axiomatization.
An \emph{axiom schema}
for $\mathcal{L}_{\dep}$ (respectively, $\mathcal{L}_{\ind}$) is an
object obtained from a formula $\chi$ of $\mathcal{L}_{\dep}$ ($\mathcal{L}_{\ind}$)
by substituting \emph{schema letters} $\varphi_1,\ldots,\varphi_n$
for all proposition symbols in $\chi$. A \emph{proof rule}
for $\dlang$ ($\mathcal{L}_{\ind}$) is an implication of the form
$\vdash \psi_1,\ \ldots\ ,\ \vdash \psi_k\ \ \Rightarrow\ \ \ \vdash\chi,$
where $\psi_1,\ldots,\psi_k,\chi$ are axiom
schemata for $\dlang$ ($\mathcal{L}_{\ind}$) and $k$ a positive integer.
A \emph{finite axiomatization} for $\dlang$ ($\mathcal{L}_{\ind}$)
is a pair $(\Phi,\Psi)$, where $\Phi$ is a finite set of
axiom schemata 
and $\Psi$ a finite set of proof rules for $\dlang$ ($\mathcal{L}_{\ind}$).
\begin{theorem}
Neither $\dlang$ nor $\mathcal{L}_{\ind}$ has a sound and
complete finite axiomatization.
\end{theorem}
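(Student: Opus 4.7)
The plan is to argue both claims by the same non-standard-semantics strategy; I describe it for $\mathcal{L}_{\dep}$ and indicate at the end how it transfers to $\mathcal{L}_{\ind}$. Suppose, for contradiction, that $\mathcal{L}_{\dep}$ admits a sound and complete finite axiomatization $(\Phi,\Psi)$. Since $\Phi$ and $\Psi$ are finite, there exists $N\in\mathbb{N}$ such that every occurrence of $\dep$ in any axiom schema in $\Phi$ or in any proof rule in $\Psi$ has the form $\dep(\eta_1,\ldots,\eta_k;\eta)$ with $k\le N$. I will construct a syntactic translation $\sigma$ of $\dfor$ into formulas over an enlarged propositional signature which preserves derivability from $(\Phi,\Psi)$ but falsifies some instance of $Ax(\dep_{N+1})$, contradicting the assumed completeness.

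To construct $\sigma$, let $\mathrm{PROP}^{*}:=\mathrm{PROP}\cup\{P_{\chi}\mid \chi=\dep(\varphi_1,\ldots,\varphi_k;\psi)\in\dfor\text{ with }k>N\}$, where each $P_{\chi}$ is a fresh propositional symbol. Define $\sigma$ by structural recursion so that it fixes members of $\mathrm{PROP}$, commutes with the Boolean connectives and with every $\dep$-occurrence of arity $\le N$, and sets $\sigma(\dep(\varphi_1,\ldots,\varphi_k;\psi)):=P_{\dep(\varphi_1,\ldots,\varphi_k;\psi)}$ whenever $k>N$. The essential property, guaranteed by the choice of $N$, is the commutation identity
\[
\sigma\bigl(\alpha(\theta_1,\ldots,\theta_m)\bigr)\;=\;\alpha\bigl(\sigma(\theta_1),\ldots,\sigma(\theta_m)\bigr)
\]
for every schema $\alpha(X_1,\ldots,X_m)$ appearing in $\Phi$ or in a rule of $\Psi$ and every substitution of formulas $\theta_1,\ldots,\theta_m$ for the schema letters; this holds precisely because $\sigma$ leaves the structural $\dep$-occurrences inside $\alpha$ alone. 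Establishing this identity cleanly, together with the fact that schema validity is preserved when the propositional signature is enlarged, will be the main technical work.

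Using the commutation identity, I would show by induction on derivations that for every formula $\varphi$ derivable from $(\Phi,\Psi)$, the image $\sigma(\varphi)$ is valid in the standard $\mathcal{L}_{\dep}$-semantics read over the enlarged signature $\mathrm{PROP}^{*}$, with each $P_{\chi}$ treated as an ordinary propositional symbol. For an axiom instance $\alpha(\theta_1,\ldots,\theta_m)$, the commutation identity rewrites $\sigma(\alpha(\theta_1,\ldots,\theta_m))$ as $\alpha(\sigma(\theta_1),\ldots,\sigma(\theta_m))$, which is itself a substitution instance of the same valid schema and hence valid. Rule applications are handled analogously, using soundness of the rules under arbitrary substitutions in the extended language.

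Finally, consider the specific valid instance
\[
\varphi\;:=\;\dep(p_1,\ldots,p_{N+1};q)\;\leftrightarrow\;\bigvee_{\chi\in\nff(p_1,\ldots,p_{N+1})}\ubox(\chi\leftrightarrow q)
\]
of $Ax(\dep_{N+1})$, whose validity is Proposition \ref{prop:Dep}. Its $\sigma$-image is $P_{\xi}\leftrightarrow\bigvee_{\chi}\ubox(\chi\leftrightarrow q)$, where $\xi:=\dep(p_1,\ldots,p_{N+1};q)$, because the right-hand side uses only the arity-$0$ operator $\cdep$ and is untouched by $\sigma$. This biconditional is easily falsified in $\mathrm{PROP}^{*}$: take any SD-model where $p_1,\ldots,p_{N+1}$ fail to determine $q$ (so that the right-hand side is false throughout) and let $P_{\xi}$ hold at some world. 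This contradicts the preservation result of the previous paragraph, proving the claim for $\mathcal{L}_{\dep}$. The $\mathcal{L}_{\ind}$ case is structurally identical: bound the lengths of all three argument tuples of $\ind$ in the axioms by $N$, hide $\ind$-atoms having some tuple of length exceeding $N$ behind fresh propositional symbols, and falsify a suitable instance of $Ax(\ind_{k,m,n})$.
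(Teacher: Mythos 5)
Your proof is essentially the paper's own argument: both exploit the fact that a finite axiomatization bounds the arity $N$ of $\dep$ occurring in its schemata, define a replacement of all $\dep$-subformulae of higher arity (you use fresh atoms $P_{\chi}$, the paper uses $\bot$) that commutes with schema instantiation precisely because the schemata contain no such subschemata, and derive a contradiction from a valid formula containing a $\dep$ of arity $N+1$. One step needs repair: ``soundness of the rules under arbitrary substitutions'' is not part of the hypothesis --- soundness only says that theorems are valid, and a rule of a sound-and-complete system need not preserve validity instance-by-instance as a primitive assumption --- so in the rule case of your induction you should either track derivability rather than validity (your commutation identity already shows that $\sigma$ maps a derivation step to a derivation step, which is exactly the paper's route), or note that the valid $\sigma$-images of the premises are themselves derivable by completeness, whence the $\sigma$-image of the conclusion is derivable and hence valid by soundness. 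With that fix, and the routine renaming argument transferring soundness and completeness to the countably infinite enlarged signature $\mathit{PROP}^{*}$ (an issue the paper's $\bot$-substitution avoids entirely), the proof goes through.
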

\begin{proof}
We discuss $\mathcal{L}_{\dep}$ only. The argument for $\mathcal{L}_{\ind}$ is
similar.
Assume $(\Phi,\Psi)$ is a sound and complete finite
axiomatization for $\mathcal{L}_{\dep}$. Let $k$ be the maximum number
such that some schema in $\Phi\cup\Psi$ contains a
subschema of the type $\dep(\psi_1,\ldots,\psi_k\, ;\, \varphi)$.
%
%
%
%
%
%
For each formula $\chi$ of $\mathcal{L}_{\dep}$, let $\chi(\bot)$
denote the formula obtained from $\chi$ by replacing
(in any order) each subformula of the type $\dep(\alpha_1,\ldots,\alpha_{k+1};\beta)$
by $\bot$. We will show by induction on deductions
that for all formulae $\chi$ of $\mathcal{L}_{\dep}$, 
if $\chi$ is a theorem of $(\Phi,\Psi)$, then also $\chi(\bot)$ is a theorem of $(\Phi,\Psi)$.
This will conclude the proof for the following reason.
Consider the formula $\dep(p,\ldots, p\, ; p)$, where $p$ is simply
repeated $k + 2$ times. This formula is a
theorem of $\mathcal{L}_{\dep}$, while $\bot$ is not.
The inductive argument is based on the 
following observation: if $\alpha$ is a
formula obtained from a schema $\varphi$ by substitution,
then also the formula $\alpha(\bot)$ can be obtained from $\varphi$
by substitution, because the schema $\varphi$ does not involve any
subschemata of the type $\dep(\chi_1,\ldots,\chi_{k+1};\chi')$.
Therefore the basis of the induction, which deals with the direct use of 
axiom schemata as a first step of a deduction, is clear.
The induction step is based on similar reasoning.
For consider a proof rule $\vdash\varphi_1,
\ldots,\vdash\varphi_m\Rightarrow\ \vdash\psi$ and
assume that we have deduced some formula $\beta$
by applying an instance
$\vdash\alpha_1,
\ldots,\vdash\alpha_m\Rightarrow\ \vdash \beta$
of this rule to some formulae $\alpha_1,\ldots,\alpha_m$
such that $\vdash\alpha_1,\ldots,\vdash\alpha_m$.
Since the schemata $\varphi_1,\ldots,\varphi_m,\psi$
do not contain subschemata of the type $\dep(\chi_1,\ldots,\chi_{k+1},\chi')$,
we observe that the implication $\vdash\alpha_1(\bot),
\ldots,\vdash\alpha_m(\bot)\Rightarrow\ \vdash \beta(\bot)$ is
also an instance of the rule $\vdash\varphi_1,
\ldots,\vdash\varphi_m\Rightarrow\ \vdash\psi$.
Since $\vdash\alpha_1,\ldots,\vdash\alpha_m$, we
have $\vdash\alpha_1(\bot),\ldots,\vdash\alpha_m(\bot)$ by
the induction hypothesis.
Thus $\vdash \beta(\bot)$, as required.
\end{proof}
%

\vcut{
\vnote{As discussed, I have removed the version of $\dlang$  with metavariables for sets of formulae.}
}

\vcut{
\section{Capturing the validities in $\dlang$ \anote{with $\Gamma$}}

\subsection{Some SD-valid principles for \dep \anote{with $\Gamma$}}

Now, we are interested in capturing the valid principles in the full language \dlang  on the class of all SD-models.

\begin{proposition}
\label{prop:ValidD}
The following formulae schemes from \dfor are SD-valid. 
\vmnote{Independence of these?} 

\begin{enumerate}

\item $\dep(\fo,\fo)$

\item $\cdep \dep(\ga,\fo)$



\item $\dep(\ga,\fo) \to \dep(\de,\fo)$ whenever $\ga \subseteq \de$

\item $(\dep(\ga,\fo) \land \dep(\de,\fob)) \to \dep(\ga\cup \de,\fo \land \fob)$ 


\item $(\dep(\ga \cup \{\fob\},\fo) \land \dep(\ga,\fob)) \to \dep(\ga,\fo)$


\item $\dep(\ga,\fo) \leftrightarrow \dep(\ga,\lnot\fo)$


\item $\cdep \fo \to \dep(\fob, \foc(\fo,\fob))$, where $\foc(\fo,\fob)$ is obtained from any formula $\foc(p,q)$ by uniform substitution $(\fo/p,\fob/q)$. 

In particular: $\cdep \fo \to \dep(\fob, \fo \land \fob)$, 
$\cdep \fo \to \dep(\fob, \fo \lor \fob)$, \ldots 

\vnote{what else?} 
\end{enumerate}
\end{proposition}

Some validities that follow from those above: 

\begin{enumerate}
\item $\dep(\ga,\fo)$ whenever $\fo \in \ga$.

\item More generally, $\dep(\ga,\fo)$ is valid for any Boolean combination  $\fo$  of formulae from $\ga$.

 \item \vnote{What else?} 
%
\end{enumerate}

Some rules preserving validity, besides Modus Ponens (MP):  

\begin{enumerate}
\item NEC$_{\dep}$: If $\models_{SD} \fo$ then $\models_{SD} \dep(\ga,\fo)$ for any set $\ga$.
\item EQ$_{\dep}$: If $\models_{SD} \fo \leftrightarrow \fob$ then 
$\models_{SD} \dep(\ga, \fo) \leftrightarrow \dep(\ga, \fob)$
for any set $\ga$.
\end{enumerate}
}

\section{The road ahead}\label{futuredirections}

%
In this paper we have defined the logics $\mathcal{L}_{\dep}$
and $\mathcal{L}_{\ind}$ as alternatives for $\mathcal{D}$ and $\mathcal{I}$.
We have comprehensively studied the
expressive powers of these four logics and argued for the naturalness of $\mathcal{L}_{\dep}$
and $\mathcal{L}_{\ind}$ in relation to $\mathcal{D}$ and $\mathcal{I}$.
We have also provided sound and complete axiomatizations for $\mathcal{L}_{\dep}$
and $\mathcal{L}_{\ind}$.
Here we discuss briefly a range of natural future developments of the present work.
%

%
\subsection{Relativised determinacy operators}

The determinacy of a formula by a set of formulae
can be relativised to a set of possible worlds.
Let $\extn{\theta}_{W}$ denote 
the set $\{w\in W\, |\, W,w\models\theta\}$ 
and define the \emph{relativised 
determinacy operator} $\dep^{\theta}(\varphi_1,\dots,\varphi_k ; \psi)$
such that
$W,w \models \dep^{\fod}(\varphi_1,\dots,\varphi_k ; \psi)$  iff
for all $u,v\in\extn{\theta}_{W}$, if the
equivalence $$W,u\models\varphi_i \Leftrightarrow W,v\models\varphi_i$$
holds for each $i\leq k$, then we have
$$W,u\models\psi\Leftrightarrow W,v\models\psi.$$
In particular, we define $\cdep^{\fod}\fo = \dep^{\fod}(\emptyset,\fo)$.  
Using $\ubox$, we notice that 
\[\cdep^{\fod}\fo \equiv \ubox (\fod \to \fo) \lor \ubox (\fod \to \lnot \fo).\] 
We also notice  that $\dep(\fod,\fo) $ is
definable in terms of $\cdep^{\fod}$ as follows.
\[\dep(\fod,\fo) \equiv \cdep^{\fod} \fo \land  \cdep^{\lnot \fod}\fo.\]
Furthermore, $\dep^{\fod}$ is inductively definable in
terms of $\cdep^{\fod}$ as follows.
\begin{enumerate}
\item
$\dep^{\fod}(\emptyset,\fo)  = \cdep^{\fod} \fo$
\item
$\dep^{\fod}(\varphi_1,\ldots,\varphi_{k+1},\psi)\\ \equiv
\dep^{\fod\land \varphi_{k+1}}(\varphi_1,\ldots,\varphi_k,\psi)
\land \dep^{\fod\land \lnot\varphi_{k+1}}(\varphi_1,\ldots,\varphi_k,\psi)$
%
%
%
%
\end{enumerate}
We provide a simple example illustrating the use of relativised
determinacy in natural language. Consider a scenario with the following propositions.
\begin{itemize}
\item
\emph{There are road blocks}, denoted by $p$.
\item
\emph{It is rush hour}, denoted by $q$.
\item
\emph{John will be on time}, denoted by $r$.
\end{itemize}
%


%
Assume the set of possible worlds in the scenario is 
$W = \{w_1,w_2,w_3,w_4,w_5\}$,
where
\begin{itemize}
\item
$w_1 = \{(p,0), (q,0),(r,1)\}$,
\item
$w_2 = \{(p,0), (q,1),(r,1)\}$,
\item
$w_3 = \{(p,0), (q,1),(r,0)\}$,
\item
$w_4 = \{(p,1), (q,0),(r,1)\}$,
\item
$w_5 = \{(p,1), (q,1),(r,0)\}$.
\end{itemize}
Consider the following claim.

\medskip

\emph{If there are road blocks, then,
whether it is rush hour determines whether John will be on time.}

\medskip

Perhaps the  most natural interpretation for this sentence is given by 
$\dep^{p}(q,r)$, which is true in each world of the  model $W = \{w_1,\ldots,w_5\}$,
and thus we have $W\models \dep^{p}(q,r)$.
Interestingly, however, there is at least one sensible
alternative interpretation, $p\rightarrow \dep(q,r)$, which
is true in the worlds $w_1,w_2,w_3$,
and false in $w_4,w_5$, whence we have $W\not\models p\rightarrow \dep(q,r)$.
%


%
%
%
%
%
%
%
%
%
%
%

%
%
%

\subsection{Determinacy and independence in the general modal setting}\label{Kripke}

As already mentioned in the introduction section, the determinacy operator $\dep$ 
can be naturally generalised to the general modal setting. 
%
%
%
%
Given a Kripke model $M = (W,R,V)$ and a possible world $w\in W$, we
define the semantics of $\dep$ as follows:
$M,w \models \dep(\varphi,\dots,\varphi_k;\psi)$  iff for all $u,v\in W$
such that $wRu$ and $wRv$,
if the equivalence $M,u\models\varphi_i\Leftrightarrow M,v\models\varphi_i$
holds for each $i\leq k$,
then $M,u\models\psi\Leftrightarrow M,v\models\psi$.

It is easy to see that in a propositional language 
extended with both $\dep$ and the standard box modality $\Box$,
we have the equivalence
$$
\dep(\varphi_1,\ldots, \varphi_k; \psi)\ \equiv
\bigvee\limits_{\chi\, \in\, \nff(\varphi_1,\ldots,\varphi_k)}
\Box (\chi\leftrightarrow\psi),
$$
i.e., for all Kripke models $M$ and points $w$ in the domain of $M$,
the pointed model $M,w$ satisfies either both or neither of
the above formulae.
On the other hand, on all models with a reflexive accessibility relation (but not in general),
the box modality $\Box$ is definable in
terms of $\dep$ by $\Box \fo := \fo \land \cdep\fo$,
where $\cdep\varphi$ of course denotes $\dep(\emptyset;\varphi)$.
Studying $\dep$ over different classes of Kripke models 
provides an interesting research direction. 
In fact, the recent study \cite{JieFan2016} has already taken up that direction. 
%

%
%
%
%
%
%

\subsection{Logical determinacy and consequence}

It is possible to extend the scope of $\dep$ to cover arbitrary
sets of formulae as follows. Let $\ga$ denote a possibly infinite 
set of formulae of $\mathcal{L}_{\dep}$.  Define $W,w\models\dep(\Gamma,\psi)$ if
for all assignments $u,v\in W$ it holds that if the equivalence $W,u\models\varphi\Leftrightarrow
W,v\models\varphi$ holds for all $\varphi\in\Gamma$, 
then $W,u\models\psi\Leftrightarrow
W,v\models\psi$.
The determinacy operator \dep now parallels in a natural way Tarski's notion of a logical
consequence operator \cons defined so that $\cons(\ga,\psi)$
holds if and only if $W,w\models\psi$ for every state
description model $W$ and assignment $w$
such that $W,w\models\varphi$ for all $\varphi\in\Gamma$. The parallel is in the sense that \dep  
satisfies the same defining properties (Reflexivity, Monotonicity and Cut) which Tarski postulated for \cons (while $\dep$ has some interesting extra properties that \cons lacks).  
Indeed, this is not accidental, because \dep and \cons bear technically similar ideas: \cons preserves truth, whereas \dep preserves invariance of truth values. 
For a further discussion of this and related issues, see 
\cite{Humberstone92,Humberstone93,Humberstone98}, as well as 
\cite{ciardellitwo, giardelli} for an argument presenting dependence as a case of logical 
consequence applied to questions instead of propositions.
Thus, an interesting research direction extending the present work involves
relating determinacy and logical consequence in the more 
general modal setting outlined here. 
%
%

\subsection{Determinacy operators and conditional knowledge}

The determinacy operator \dep has a natural epistemic reading: the
determinacy formula $\dep(\varphi_1,\dots,\varphi_k,\psi)$ can be interpreted to mean that an agent knows the truth value of $\psi$ relative to the truth
values of the formulae $\varphi_1,\dots,\varphi_k$ in the sense that the
agent can always deduce the truth value of $\psi$ if
she learns the truth values of $\varphi_1,\dots,\varphi_k$.
This interpretation of $\dep$ leads to yet another open research direction. 
It is worth noting that a uniform analysis of various kinds of knowledge, including knowledge of questions and knowledge of dependencies of this kind has already been
given in the work \cite{CiardelliR15} on inquisitive epistemic logic (IEL).  
Indeed, conditional knowledge can be regarded as knowledge obtained by answering questions:
the formula $\dep(\varphi;\psi)$ with the meaning ``agent $a$ knows whether $\psi$ holds conditionally on the knowledge whether $\varphi$ holds'' can be expressed in IEL as 
$K_{a}(?\varphi \to ? \psi)$.

\medskip

\medskip

\noindent
\textbf{Acknowledgements.} 
Valentin Goranko was partly supported by a research grant 2015-04388 of the Swedish Research Council. 
Antti Kuusisto
was supported by Jenny and
Antti Wihuri Foundation and the ERC grant
647289 (CODA). The results of this paper
were presented at the Dagstuhl seminar
\emph{Logics for Dependence and Independence} in
June 2015, where an earlier manuscript of the present paper was circulated; 
we thank the participants for their interest.
After submitting the arXiv preprint  \cite{GorankoKuusistoArxiv2016}, we have 
also been in correspondence with Jie Fan and Lloyd Humberstone.
We thank them for their comments and discussion. In particular, Humberstone subsequently commented in depth some aspects of our work in \cite{Humberstone2017} and
Jie Fan produced the manuscript \cite{JieFan2016}.
%

%

\bibliography{determinacy}

\end{document}